\documentclass[final,a4paper,11pt]{article}
\usepackage{mathptmx}
\usepackage{amssymb,amsmath, amsfonts,amsthm,cases}
\usepackage{a4wide}
\usepackage{color}

\newcommand{\R}{\mathbb{R}}

\newcommand{\norm}[1]{\|#1\|}

\newcommand{\dist}[1]{{\rm dist}(#1)}

\newcommand{\mv}{\,\mid\,}

\newcommand{\K}{{\cal K}}

\renewcommand{\O}{{\mathcal O}}

\newcommand{\F}{{\cal F}}

\newcommand{\Np}{\hat{\cal N}^p}

\newcommand{\setto}[1]{\mathop{\rightarrow}\limits^{#1}}
\newcommand{\longsetto}[1]{\mathop{\longrightarrow}\limits^{#1}}
\newcommand{\skalp}[1]{\langle #1\rangle}

\newcommand{\wb}{\bar w}
\newcommand{\xb}{\bar x}
\newcommand{\yb}{\bar y}
\newcommand{\zb}{\bar z}

\newcommand{\db}{\bar d}

\newcommand{\dba}{{\bar d^\ast}}
\newcommand{\oo}{o}

\newcommand{\argmin}{{\rm arg\,min\,}}

\newcommand{\range}{{\rm Range\,}}
\newcommand{\lin}[1]{{\cal L}(#1)}

\newcommand{\Spanb}[1]{\big(#1\big)^+}

\newcommand{\gph}{\mathrm{gph}\,}
\newcommand{\dom}{\mathrm{dom}\,}
\newcommand{\tto}{\rightrightarrows}

\newcommand{\myvec}[1]{\left(\begin{array}{c}#1\end{array}\right)}

\newcommand{\KPg}{\K_P(g(\yb),\dba)}

\newcommand{\poly}{ polyhedral }

\newtheorem{theorem}{Theorem}[section]
\newtheorem{proposition}[theorem]{Proposition}
\newtheorem{remark}[theorem]{Remark}
\newtheorem{lemma}[theorem]{Lemma}
\newtheorem{corollary}[theorem]{Corollary}
\newtheorem{definition}[theorem]{Definition}
\newtheorem{example}[theorem]{Example}

\newtheorem{assumption}{Assumption}

\title{Second-order  optimality conditions for  optimization problems with  generalized equation constraints}
\author{Mat\'u\v{s} Benko\thanks{Johann Radon Institute for Computational and Applied Mathematics (RICAM), A-4040 Linz, Austria, e-mail: matus.benko@oeaw.ac.at.
This author's research was supported by the Austrian Science Fund (FWF) under grant P32832-N.}
\and Helmut Gfrerer\thanks{Johann Radon Institute for Computational and Applied Mathematics (RICAM), A-4040 Linz, Austria, and Institute of Information Theory and Automation, Czech Academy of Sciences, 18208 Prague, Czech Republic, e-mail: helmut.gfrerer@ricam.oeaw.ac.at.}
\and Jane J. Ye\thanks{Department of Mathematics
and Statistics, University of Victoria, Victoria, B.C., Canada V8W 2Y2, e-mail: janeye@uvic.ca. The research of this author was supported by NSERC.}
\and Jin Zhang\thanks{Department of Mathematics, Southern University of Science and Technology, National Center for Applied Mathematics
Shenzhen, Shenzhen 518055, P.R. China.
e-mail: zhangj9@sustech.edu.cn. }
\and Jinchuan Zhou\thanks{Department of Statistics, School of Mathematics and Statistics, Shandong University of Technology,
 Zibo 255049, P.R. China, e-mail: jinchuanzhou@163.com. The research of this author was supported by National Natural Science Foundation of China (12371305, 12571321) and Shandong Province Natural
Science Foundation (ZR2023MA020).}}

\date{}

\begin{document}

\maketitle

\noindent
 {\small
 {\bf Abstract.}\
This paper provides second-order optimality conditions for optimization problems with generalized equation constraints (GEPs), a framework that encompasses several important and challenging models in mathematical programming, including mathematical programs with variational inequality constraints (MPVIs) and bilevel programs.
The obtained optimality conditions are novel even for these particular problem classes.
As an application, second-order optimality conditions for MPVIs are detailed.
The technical key lies in developing first- and second-order variational analysis of the highly intricate constraint system,
which is needed to capture the local curvature of the feasible set entering these optimality conditions.
Part of this task was already carried out in our companion paper \cite{BeGfrYeZhangZhou}, and here we complete the study.
Comprehensive variational analysis results are derived, which are of independent interest.


\smallskip
 \noindent
 {\bf Keywords.}\ Generalized equation constraints, second-order optimality conditions, variational analysis, mathematical programs with variational inequality constraints, bilevel programming.

\smallskip
 \noindent
{\bf AMS subject classifications.}\ 49J53, 90C26, 90C33, 90C46.
}

\section{Introduction}

This paper completes our investigation initiated in \cite{BeGfrYeZhangZhou} and develops second-order optimality conditions for the class of optimization problems of the form
\begin{flalign*} 
\begin{split}
\mbox{(GEP)} \hspace{50mm} \min & \ \  f(x)\\
{\rm s.t.}
& \ \  0\in F(x)+b(x)^T N_P(g(x)),
\end{split}&
\end{flalign*}
where $P \subseteq \mathbb{R}^l$ is a convex polyhedral set, $f:\R^n\to \R$, $F:\mathbb{R}^n\rightarrow \mathbb{R}^m$,
$g:\mathbb{R}^n\rightarrow \mathbb{R}^l$, and $ b:\mathbb{R}^n\rightarrow \mathbb{R}^{l\times m}$ are twice continuously differentiable, and $N_P$ denotes the normal cone of $P$.
In the special case where $n=m=l$, $g(x)=x$ and $b(x)$ is the identity matrix, the constraint reduces to the generalized equation studied by Robinson \cite{Rob81}.
In this spirit, we refer to this problem as an optimization problem with generalized equation constraints and employ the GEP moniker.
Still with $b(x)$ equal to the identity matrix but with general $g$, one can model prominent disjunctive problems such as programs with complementarity or vanishing constraints.
Yet even more complex problems and structures can be targeted with nontrivial matrices $b(x)$, and we are particularly interested in these.
Such problems include, e.g., mathematical programs with second-order cone complementarity constraints (SOC-MPCC) (see, e.g., Outrata and Sun \cite{OutSun} and Ye and Zhou \cite{YeZhou})
or problems with variational inequality constraints and bilevel programs as detailed below.

The constraints of GEP involve both the normal cone mapping $N_P$ and the coefficient matrix $b(x)$;
this requires us to simultaneously account for the nonlinear coupling effects introduced by $b(x)$ and the geometric structure of the set-valued mapping $N_P$.
This makes the second-order theory for GEPs highly challenging and accounts for our decision to split the investigation into two papers.
As far as we know, a systematic study of second-order optimality conditions for such general problems is lacking.
Even for the particular cases mentioned above, the second-order theory remains relatively limited.

A notable advantage of the GEP model lies in providing a unified framework for describing a wide range of optimization problems.
For example, consider the following mathematical program with variational inequality constraints (MPVI):
\begin{flalign*}
\begin{split}
\mbox{(MPVI)} \hspace{46mm} \min\limits_{x,y} & \ \  f(x,y)\\
{\rm s.t.}
& \ \  \langle F(x,y),y'-y\rangle \geq 0, \ \ \ \forall y'\in \Gamma(x),
\end{split}&
\end{flalign*}
where $\Gamma(x):=\{y\,|\, \psi(x,y)\in P\}$, $P \subseteq \mathbb{R}^l$ is a convex polyhedral set,
$f:\R^{n_1\times n_2}\to \R$, $F:\mathbb{R}^{n_1\times n_2}\rightarrow \mathbb{R}^{n_2}$, and $ \psi:\mathbb{R}^{n_1\times n_2}\rightarrow \mathbb{R}^l$ are twice continuously differentiable.
Alternative names for MPVIs include generalized bilevel programming problems or mathematical programs with equilibrium constraints (see e.g., \cite{GfrYe20,lpr,MZ21,yyz,yzz}).
Particularly interesting and important is the case where $\Gamma(x)$ is convex-valued.
Then, the variational inequality constraint describes the normal cone inclusion $-F(x,y) \in N_{\Gamma(x)}(y)$
and if the constraint system $\Psi(x,y) \in P$ in variable $y$ defining $\Gamma(x)$ satisfies a constraint qualification, the MPVI becomes
\begin{align}
\hspace{34mm} \min & \ \  f(x,y) \nonumber\\
{\rm s.t.}
& \ \  0\in F(x,y)+ N_{\Gamma(x)}(y) = F(x,y)+ \nabla_y \psi(x,y)^TN_P(\psi(x,y)), \label{MPVI-1}
\end{align}
which is in the GEP form.
On the other hand, if $F(x,y):=\nabla_y \varphi (x,y)$ with $\varphi:\mathbb{R}^{n_1\times n_2}\rightarrow \mathbb{R}$ being differentiable and pseudo-convex in $y$,
the MPVI is equivalent to the following bilevel program:
  \begin{flalign*}
\begin{split}
\mbox{(BP)} \hspace{46mm} \min & \ \  f(x,y)\\
{\rm s.t.}
& \ \   y\in \argmin_ {y'}\{ \varphi(x,y') \,  | \, y'\in \Gamma(x)\}.
\end{split}&
\end{flalign*}
The reader is referred to the monographs \cite{Dempe,FPang,lpr,okz} and the references therein for background and applications of MPVIs and BPs.

To derive second-order optimality conditions for the GEP,
we reformulate it in the standard form of a general constrained optimization problem:
\begin{flalign*}
\begin{split}
\mbox{(GP)} \hspace{49mm} \min & \ \  f(x)\ \ \ {\rm s.t. } \ \  \varphi(x)\in \Omega.
\end{split}&
\end{flalign*}
To this end, we set
\begin{equation}
\label{EqOmega-1}
\varphi(x):=(x, -F(x)) \ \text{ and } \
\Omega:=\{(x,b(x)^T\eta) \,|\, (g(x),\eta) \in {\rm gph}N_P\},
\end{equation}
where ${\rm gph}N_P$ stands for the graph of the normal cone mapping $N_P$.
Most results are available for second-order conditions for the GP when $\Omega$ is convex; see the classical reference \cite{BonSh00}.
For more recent works in the convex setting, we refer to \cite{MMS21} based on modern variational analysis,
as well as \cite{AXY23} where some of the fundamental results of \cite{BonSh00} are extended to infinite-dimensional spaces
and even the smoothness requirements are relaxed.

In our case, however, $\Omega$ is rarely convex.
Results most relevant to us were initiated in \cite{GfrYeZh19}
and further developed in \cite{BeGfrYeZhangZhou,BM23,OYZ25}.
Our companion paper \cite{BeGfrYeZhangZhou} contains a deeper discussion on second-order optimality conditions for GP;
here, we limit our remarks to just mentioning that such conditions will, naturally, combine first- and second-order
derivatives of $f$ and $\varphi$ as well as the local curvature of $\Omega$, see Proposition \ref{Prop2.17}.
As in \cite{BeGfrYeZhangZhou}, we will use three variational tools to capture the curvature of $\Omega$.

The above is true for GPs with an arbitrary set $\Omega$.
If $\Omega$ is simple, its curvature can perhaps be easy to compute or even vanish altogether.
Our $\Omega$ from \eqref{EqOmega-1}, however, possesses very complicated structure.
Computing its curvature is actually precisely the key step and the major challenge of our work.
Ultimately, it will be expressed in terms of derivatives of $b$ and $g$ and the curvature of ${\rm gph}N_P$,
which is trivial due to its polyhedrality.

Instead of focusing specifically on $\Omega$ from \eqref{EqOmega-1}, we employ a slightly more general setting, targeting the structural essence of \eqref{EqOmega-1}.
To this end, we consider
 \[
 \Omega:=\{B(z)\, | \, z\in\Gamma\}\quad\mbox{ with }\quad \Gamma:=\{z\, | \, G(z)\in D\},
 \]
where $B:\R^{\hat{n}}\to \R^{\hat{m}}$ and $G:\R^{\hat{n}}\to \R^{\hat{l}}$ are twice continuously differentiable mappings, and $D$ is a polyhedral set.
The particular choices $z:=(x,\eta) $, $B(z):=(x, b(x)^T\eta)$, $G(z):=(g(x),\eta)$, and $D:={\rm gph}N_P$ indeed lead to \eqref{EqOmega-1}.
The key is to connect the curvature of $\Omega$ with the curvature of $D$ and even variational geometries of the two sets more broadly.
As expected, this is done in two main steps via the intermediate set $\Gamma$.
The core part of \cite{BeGfrYeZhangZhou} was dedicated to linking variational geometries of $\Gamma$ and $D$
and here we proceed with the second step of connecting $\Omega$ with $\Gamma$.
These fundamental results are of independent interest beyond the purposes of this paper.

The remainder of this paper is organized as follows. Section 2 reviews necessary tools from variational analysis and relevant preliminary results, including the main results of \cite{BeGfrYeZhangZhou}.
Section 3 provides a crucial and comprehensive geometric analysis of the set $\Omega=B(\Gamma)$.
Based on this, Section 4 tackles the structure of \eqref{EqOmega-1} and Section 5 establishes second-order necessary and sufficient optimality conditions for the GEPs.
As an application, Section 6 details second-order optimality conditions for a specific MPVI. 
Conclusions are drawn in Section 7.

\section{Preliminaries and auxiliary results}

In this section, we clarify the notation, recall background material from variational analysis, and present preliminary results.
The open unit ball is denoted by $\mathbb{B}$.
For a set $S \subset \R^n$, denote by  ${\rm span}\, S$ the linear subspace generated by $S$.
The indicator function $\delta_S:\R^n \to \bar \R := [-\infty,+\infty]$ of $S$
is given as $\delta_S(z)=0$ for $z \in S$ and $\delta_S(z)=+\infty$ if $z \notin S$.
Let $S^\circ$ and $\sigma_S :\R^n \to \bar \R$ stand for the polar cone of $S$ and the support function of $S$, respectively, i.e.,
 $S^\circ:=\{z^* \in \R^n \mv \langle z^*,z \rangle \leq 0,\  \forall z\in S \}$
and $\sigma_S(z^*):=\sup \{\langle z^*,z \rangle \mv z\in S\}$ for $z^*\in \R^n$.
For an extended-valued function $\varphi:\R^n \to [-\infty,\infty]$, its effective domain is ${\rm dom}\varphi:=\{z\,|\,\varphi(z) <+\infty\}$.
For $w \in \R^n$, denote by ${\{w\}}^\perp$ the orthogonal complement of the linear subspace generated by $w$.
 Let $o:\mathbb{R}_+\rightarrow \mathbb{R}^n$ stand for a mapping satisfying $o(t)/t \rightarrow 0$ as $t \downarrow 0$.
 The symbol $z^{\prime} \overset{S}{\to} z$ indicates that $z^{\prime}\in S$
 and $z^{\prime} \rightarrow z$.
   For a mapping $\psi:\mathbb{R}^n\to \mathbb{R}^d$ and a point $z \in \R^n$, we denote by
 $\nabla \psi(z)\in \mathbb{R}^{d\times n}$ its Jacobian
 and by $\nabla^2 \psi(z)$ its second derivative at $z$ as defined by
 $$w^T\nabla ^2 \psi(z) :=\lim_{t\rightarrow 0} \frac{\nabla \psi(z+ tw)-\nabla \psi( z)}{t} \quad \forall\, w \in \mathbb{R}^n.$$
 Hence
  $$\nabla^2 \psi(z)(w,w):=w^T \nabla^2 \psi(z) w=(w^T \nabla^2 \psi_1(z)w, \dots, w^T \nabla^2 \psi_d(z)w)^T \quad \forall\, w\in \mathbb{R}^n.$$

\subsection{Variational geometry}
First we review the various concepts of tangent cones and normal cones.
 \begin{definition}[Tangent Cones, \cite{Mor06,RoWe98}]
Given a closed set $C\subseteq \mathbb{R}^n$, $z\in C$ and $u\in \R^n$, the {\em tangent/contingent cone} to $C$ at $z$ is defined by
 \begin{align*}
 T_C(z):=
  \big\{u\in\R^n \, | \, \exists \ t_k\downarrow 0,\; u_k\to u \ \ {\rm with}
 \ \ z+t_k u_k\in C \big\}.
 \end{align*}
For $z\in C$ and $u\in T_C(z)$,
the outer second-order tangent set to $C$ at $z$ in direction $u$ is defined by
 \[
 T_C^2(z; u)
 := \left \{ w \in \R^n \, | \, \exists \ t_k \downarrow 0, w_k \rightarrow w \
 {\rm such \ that}\  z+t_ku+\frac{1}{2}t^2_k w_k \in C
  \right \}.
 \]
 \end{definition}

 \begin{definition}[Normal Cones, \cite{Gfr13a,GM,Mor06,RoWe98}]\label{NormalCone}
 Given a closed set $C\subseteq \mathbb{R}^n$, $z\in C$ and a direction $u\in \mathbb{R}^{n}$, the regular/Fr\'echet normal cone, the  limiting/Mordukhovich normal cone to $C$ at $z$ and the limiting normal cone to $C$ at $z$ in direction $u$ are given respectively by
 \begin{align*}
 \widehat{N}_C(z)&:=\left\{v\in \R^n \, | \, \langle v, z'-z\rangle \le
 o\big(\|z'-z\|\big), \ \forall z'\in C\right\},\\
 N_C(z)&:=
 \left \{v\in \R^n\, |\, \exists  z_k \overset{C}{\to} z, v_{k}\rightarrow v \ {\rm with }\  v_{k}\in \widehat{N}_{C}(z_k) \right \},\\
N_{C}(z; u)&:=
\left \{v\in \R^n \, |\, \exists t_{k}\downarrow 0, u_{k}\rightarrow u, v_{k}\rightarrow v \ {\rm with }\  v_{k}\in \widehat{N}_{C}(z+ t_{k}u_{k}) \right \}.
\end{align*}
\end{definition}
\if{
\begin{definition} \label{DirectionN}
Let $\wb \in \mathbb{R}^n$.  For $\delta, \rho>0$, the set
$$
 V_{\delta, \rho}(\wb) := \left \{ w\in \delta \mathbb{B}\left |  \big\|\|\wb\|w-\|w\|\wb\big\|\leq \rho \|w\|\|\wb\| \right. \right\}
$$
  is called a directional neighborhood of the direction $\wb$.
\end{definition}
}\fi

\begin{definition}[Directional Proximal Normal Cones, \cite{BeGfrYeZhangZhou}]
Given a closed set $C\subseteq \mathbb R^n$, a point $z \in C$ and a direction $u\in T_C(z)$,
we define the proximal prenormal cone to $C$ at $z$ in direction $u$ as
\[\Np_C(z;u):=\{z^*\, |\, \exists \delta,\rho, \gamma>0: \skalp{z^*,z'-z}\leq \gamma \|z'-z\|^2 \quad \forall z'\in C\cap (z+V_{\delta,\rho}(u))\} ,\]
where
\[
 V_{\delta, \rho}(u) := \left \{ u' \in \delta \mathbb{B}\left |  \big\|\|u\|u'-\|u'\|u\big\|\leq \rho \|u'\|\|u\| \right. \right\}
\]
  is called a directional neighborhood of the direction $u$.
The proximal normal cone to $C$ at $z$ in direction $u$ is given as
\[\widehat N^p_C(z;u):= \Np_C(z;u)\cap [u]^\perp.\]
In case when $u\not\in T_C(z)$ we set $\Np_C(z;u):= \widehat N^p_C(z;u):=\emptyset$.
\end{definition}
 \begin{definition}[Lower Generalized Support Function, \cite{GfrYeZh19}]\label{def-hat-sigma}
   Given a nonempty closed set $C\subseteq  \R^n$, we define the {\em lower generalized support function to $C$} as the mapping $\hat\sigma_C:\R^n\to \bar \R$ by
   \[
     \hat\sigma_C(\lambda):=\liminf_{\tilde\lambda \to\lambda}\inf_u\{\tilde  \lambda^Tu \mid \tilde\lambda\in N_C(u)\}=\liminf_{\tilde\lambda\to\lambda}\inf_u\{ \tilde\lambda^Tu \mid \tilde\lambda\in \widehat N_C(u)\}.
   \]
   If $C=\emptyset$, then we define  $\hat\sigma_C(\lambda):=-\infty $ for all $\lambda.$
 \end{definition}
 It was shown in \cite[Proposition 6]{GfrYeZh19} that in general $\hat \sigma_C(\lambda)\leq \sigma_C(\lambda)$ for all $\lambda$ and the equality holds when $C$ is convex.
\begin{definition}[Second-order Subderivative, \cite{Mor24,RoWe98}]  Let $\varphi: \mathbb{R}^n \rightarrow \bar{\mathbb{R}}$, $\varphi(\bar z)$ be finite and $\bar v\in \mathbb{R}^n$.
The second-order subderivative of $\varphi$ at $\bar z$ for $\bar v$ and $u$  is a function defined by
 \begin{eqnarray*}
 d^2\varphi(\bar z;\bar v)(u):= \liminf\limits_{{t\downarrow 0} \atop {u'\to u}} \frac{\varphi(\bar{z}+tu')-\varphi(\bar{z})-t\langle \bar{v}, u'\rangle}{\frac{1}{2}t^2}.
\end{eqnarray*}
 \end{definition}
In this paper we are mainly interested in the second subderivative of the indicator function $\delta_C$. In this case we have
\begin{eqnarray} d^2\delta_{C}(\bar{z};u^*)(u)=\liminf\limits_{{t\downarrow 0} \atop u'\to u}\frac{\delta_{C}(\bar{z}+tu')-\delta_{C}(\bar{z})-t\langle u^*, u' \rangle }{\frac{1}{2}t^2}
=\liminf\limits_{{t\downarrow 0, u'\to u} \atop { \bar z+tu'\in C}}\frac{-2\langle u^*, u' \rangle}{t}. \label{indicatorf}
 \end{eqnarray}

\begin{definition}[Directional Metric Subregularity, \cite{Gfr13a}]\label{directionMS}
Let $G:\R^n\rightarrow  \R^d$,  $ C\subseteq \R^d$ and $G(\bar z) \in C$.
 We say that the  set-valued map
 $M(z):=G(z)-C$ is metrically subregular (MS) at $(\bar{z},0)$ in direction $\wb\in \mathbb{R}^n$, if there exist
 $\kappa, \delta, \rho>0$ such that
 \[
 {\rm dist}(z,M^{-1}(0)) \leq \kappa \, {\rm dist}(G(z), C), \quad \forall z\in \bar{z}+V_{\delta,\rho}(\wb).
 \]
 In the case where  $\wb=0$, we simply say that $M$ is metrically subregular at $(\bar{z},0)$ or the metric subregularity constraint qualification (MSCQ) holds at $\bar z$.
  \end{definition}
 Sufficient conditions for directional metric subregularity are given in, e.g., \cite{BM25,Gfr11}.

\subsection{Polyhedral sets}
Given an arbitrary set $C\subseteq \R^n$, we call a subspace $L$ the {\em generalized  linearity space} of $C$ and denote it by  $\lin{C}$ provided it is the largest subspace $L\subseteq \R^n$ such that $C+L\subseteq C$. In the case where $C$ is a closed and convex cone, we have $\lin{C}=C\cap (-C)$. Further we denote by
\[C^+:={\rm span\;} (C - C)\]
the unique subspace parallel to the affine hull of $C$. In addition, if $C$ is a cone, then
$C^+={\rm span}\, C$; if $C$ is a closed and convex cone, then $C^+=C-C$. For a vector $v\in \R^n$, let $[v]$ denote the subspace generated by $v$, i.e., $[v]:=\{tv\,|\, t\in \R\}$.

A set $C\subseteq\R^n$ is said to be {\it convex polyhedral} if it is the intersection of finitely many halfspaces, whereas it is said to be {\it\poly} whenever it is the union of finitely many convex polyhedral sets. If $C$ is convex polyhedral, then
$T_C(\bar z)\subseteq T_C(z)$ for $z\in C$ near $\bar z$. But this relation fails to hold when $C$ is nonconvex polyhedral. For example,
let $C:=(\R_-,0)\cup (0,\R_+)$ and $\bar z=0$. Then $T_C(\bar z)=C$
and $T_C(z)=\{0\}\times \R$ for all $z=(0,a)\in C$ with $a>0$. So $T_C(\bar z)\nsubseteq T_C(z)$ and $T_C(z)\nsubseteq T_C(\bar z)$.
The following relations between $T_C(\bar z)$ and $T_C(z)$ will be used in the subsequent analysis.

\begin{lemma}\label{relation-tangent}
Let $C\subseteq\R^n$ be a polyhedral set and $\bar z\in C$. Then there exists an open neighborhood $V$ of $0$ such that
\[
T_C(z)=T_{T_C(\bar{z})}(z-\bar{z}), \ \ \
T_C(z)\subseteq T_C(\bar z)+[z-\bar z], \ \ \ T_C(z)^+\subseteq T_C(\bar z)^+, \ \ \ \forall \ z\in C\cap (\bar z+V).
\]
\end{lemma}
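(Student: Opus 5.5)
The plan is to reduce to the convex polyhedral case and then take unions. Write $C=\bigcup_{i=1}^p C_i$ with each $C_i$ convex polyhedral. Set $I(z):=\{i \mid z\in C_i\}$. Since the $C_i$ are closed, there is an open neighborhood $V_0$ of $0$ with $I(z)\subseteq I(\bar z)$ for all $z\in C\cap(\bar z+V_0)$. Because a finite union of closed sets has tangent cone equal to the union of the tangent cones, we get
\[
T_C(z)=\bigcup_{i\in I(z)}T_{C_i}(z)\quad\mbox{and}\quad T_C(\bar z)=\bigcup_{i\in I(\bar z)}T_{C_i}(\bar z).
\]

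The key ingredient is the classical exactness property of a convex polyhedral set $C_i$ at $\bar z\in C_i$: there is a neighborhood $V_i$ of $0$ with $C_i\cap(\bar z+V_i)=(\bar z+T_{C_i}(\bar z))\cap(\bar z+V_i)$. This holds because, near $\bar z$, only the active inequalities matter. Take $V\subseteq V_0\cap\bigcap_{i\in I(\bar z)}V_i$ open. In addition, choose $V$ so small that $z\in C_i$ for $i\in I(\bar z)$ is equivalent to $z-\bar z\in T_{C_i}(\bar z)$ whenever $z-\bar z\in V$.

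For $z\in C\cap(\bar z+V)$ and $i\in I(z)$, the tangent cone is a local object. Since $C_i$ coincides with the translated cone $\bar z+T_{C_i}(\bar z)$ near $z$, we obtain $T_{C_i}(z)=T_{T_{C_i}(\bar z)}(z-\bar z)$. Moreover, $I(z)$ is exactly the set of $i\in I(\bar z)$ with $z-\bar z\in T_{C_i}(\bar z)$. Taking unions over $i\in I(z)$ gives $T_C(z)=T_{T_C(\bar z)}(z-\bar z)$. For the union of cones $K:=T_C(\bar z)=\bigcup_{i\in I(\bar z)} K_i$ at the point $w=z-\bar z$, we use $T_K(w)=\bigcup_{i:\,w\in K_i}T_{K_i}(w)$, which is the same union formula applied to closed sets.

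The remaining two inclusions follow from the first by properties of a convex polyhedral cone $K_i$ at $w\in K_i$. In that case $T_{K_i}(w)=K_i+[w]$: indeed $T_{K_i}(w)=\mathrm{cl}(K_i-\R_+w)$, which equals $K_i-\R_+w\subseteq K_i+[w]$ since this set is polyhedral and therefore closed, and conversely $K_i+[w]\subseteq T_{K_i}(w)$ because $K_i\subseteq T_{K_i}(w)$ and $\pm w\in T_{K_i}(w)$. Hence
\[
T_C(z)\subseteq\bigcup_i (K_i+[w])\subseteq T_C(\bar z)+[z-\bar z].
\]
For the spans, each $T_{K_i}(w)\subseteq K_i+[w]\subseteq \mathrm{span}\,K_i\subseteq T_C(\bar z)^+$, because $w\in K_i$. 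Taking the span of the union then gives $T_C(z)^+\subseteq T_C(\bar z)^+$.

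The only real obstacle is the local exactness $C_i\cap(\bar z+V_i)=\bar z+T_{C_i}(\bar z)$ near $\bar z$. This is proved by writing $C_i=\{z\mid a_j^Tz\le \beta_j\}$ and taking $V_i$ small enough that the inactive constraints stay strictly satisfied, so that near $\bar z$ only the active constraints $a_j^T(z-\bar z)\le 0$ remain. The rest is bookkeeping with finite unions.
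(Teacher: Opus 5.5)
Your proof is correct and follows essentially the same route as the paper: split $C$ into convex polyhedral pieces, use the local exactness $C_i\cap(\bar z+V)=(\bar z+T_{C_i}(\bar z))\cap(\bar z+V)$ (which the paper cites as Dontchev--Rockafellar, Theorem 2E.3), take unions over the active indices, and use $T_{K_i}(w)=K_i+[w]$ for $w\in K_i$. The differences are cosmetic. You verify $T_{K_i}(w)=K_i+[w]$ via $T_{K_i}(w)=\mathrm{cl}(K_i-\R_+w)$ and the closedness of polyhedral sets, whereas the paper uses the polarity identity $(K_i^\circ\cap[w]^\perp)^\circ=K_i+[w]$. You also assemble the first identity piece by piece, while the paper reads it off directly from the union identity $(C-\bar z)\cap V=T_C(\bar z)\cap V$.
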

\begin{proof}
By definition $C=\displaystyle \cup_{i=1}^s C_i$ where each $C_i$ for $i=1,\dots, s$ is convex polyhedral.
Then by \cite[Theorem 2E.3]{DoRo14} there is an open neighborhood $V$ of $0$ such that
{$(C_i-\bar{z})\cap V=T_{C_i}(\bar{z})\cap V$} for $i=1,\ldots,s$.
Taking the union on both sides of the above equations, since $T_{C}(\bar{z})=\bigcup_{i=1}^s T_{C_i}(\bar{z})$, we obtain
\begin{equation} \label{Eqtangent} (C-\bar{z})\cap V=T_{C}(\bar{z})\cap V,
\end{equation}  or equivalently
\begin{equation}\label{EqRedPoly}{C \cap (\bar{z}+V)=(\bar{z}+T_C(\bar{z}))\cap (\bar{z}+V)}.\end{equation}
Hence
\[
T_C(z)=T_{\bar{z}+T_C(\bar{z})}(z)=T_{T_C(\bar{z})}(z-\bar{z}), \quad \forall z\in C\cap (\bar{z}+V).
\]
For $z\in C\cap (\bar{z}+V)$, define the active index set as $I(z):=\{i\, |\, z\in C_i\}$. Clearly
$I(z)\subseteq I(\bar z)$ by shrinking $V$ if necessary. For $i\in I(z)$, since $C_i$ is convex polyhedral, we have
\begin{align*}
T_{C_i}(z)&= T_{T_{C_i} (\bar{z})}(z-\bar{z})=[N_{T_{C_i}(\bar{z})} (z-\bar{z})]^\circ=\left [[T_{C_i}(\bar{z})]^\circ\cap (z-\bar{z})^\perp\right ]^\circ\\
&= [T_{C_i}(\bar{z})]+[(z-\bar{z})^\perp]^\circ= T_{C_i}(\bar{z})+[z-\bar{z}].
\end{align*}
Therefore
\begin{equation}\label{tangent-1}
T_C(z)=\cup_{i\in I(z)} T_{C_i}(z)= \cup_{i\in I(z)} \left(T_{C_i}(\bar{z})+[z-\bar{z}]\right)
\subseteq \cup_{i\in I(\bar{z})} T_{C_i}(\bar{z})+[z-\bar{z}]=  T_C(\bar{z})+[z-\bar{z}].
\end{equation}
Moreover by (\ref{EqRedPoly}) we have $z-\bar{z}\in T_C(\bar{z})$
implying $[z-\bar{z}]\subseteq T_C(\bar{z})^+$. This together with \eqref{tangent-1} implies
\begin{eqnarray*}
T_C(z)-T_C(z) \subseteq  T_C(\bar{z})-{T_C(\bar z)}+[z-\bar{z}]\subseteq T_C(\bar{z})^+,
\end{eqnarray*}
and hence
\[
  T_C(z)^+ \subseteq T_C(\bar{z})^+, \ \quad \forall z\in C\cap (\bar{z}+V).
\]
This completes the proof.
\end{proof}

Given a pair $(c,c^*)\in \gph \widehat N_C$, we denote by
\[\K_C(c;c^*):=T_C(c)\cap [c^*]^\perp\]
the {\em critical cone} to  $C$ at $c$ for $c^*$.

We first collect some known results for the normal cone mapping to a convex polyhedral set $P\subseteq \R^l$.
\begin{lemma}\label{LemReduction}
Let $P\subseteq \R^l$ be a convex polyhedral set and $(\bar d, \bar d^*)\in {\rm gph}N_P$. Then there is a neighborhood $V$ of $0$ such that
  \begin{align}
  T_{\gph N_P}(\db,\dba)\cap V &= \big(\gph N_P-(\db,\dba)\big)\cap V=\gph N_{\K_P(\db,\dba)}\cap V  \nonumber\\
  &= \{(e,e^*)\in \K_P(\db,\dba)\times \K_P(\db,\dba)^\circ\mv \skalp{e,e^*}=0\}\cap V. \label{eqna-a-2}
  \end{align}
  Hence
   \begin{equation}\label{equal-a-1}
   T_{\gph N_P}(\db,\dba)=\gph N_{\K_P(\db,\dba)}
  =\{(e,e^*)\in \K_P(\db,\dba)\times \K_P(\db,\dba)^\circ\mv \skalp{e,e^*}=0\}.\end{equation}
  Moreover
\begin{equation}\label{EqSecOrdTanGphN_P}T^2_{\gph N_P}\big((\db,\dba),(e,e^*)\big)= T_{\gph N_{\K_P(\db,\dba)}}(e,e^*)=\gph N_{\K_{\K_P(\db,\dba)}(e,e^*)},\ \ \ \forall \, (e,e^*)\in T_{\gph N_P}(\db,\dba).
\end{equation}
\end{lemma}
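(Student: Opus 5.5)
The plan is to reduce everything to the classical local reduction for polyhedral normal cone graphs, and then to apply that reduction twice: once at $(\db,\dba)$ and once at the direction $(e,e^*)$.

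\emph{Step 1: local reduction of $\gph N_P$.} Set $K:=\K_P(\db,\dba)$. By the reduction lemma for convex polyhedral sets (see, e.g., \cite[Theorem 2E.3]{DoRo14}), there is a neighborhood $V_1$ of $0$ with $(P-\db)\cap V_1=T_P(\db)\cap V_1$. Using this, I show that for $(d,d^*)$ near $(\db,\dba)$ we have $d^*\in N_P(d)$ if and only if $d^*-\dba\in N_K(d-\db)$. The argument runs as follows. Since $P$ agrees with $\db+T_P(\db)$ near $\db$, for $d$ near $\db$ we get $N_P(d)=N_{T_P(\db)}(d-\db)$. The faces of $T_P(\db)$ near $0$ relate to $K$ through the face $T_P(\db)\cap[\dba]^\perp$. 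Moreover, $d^*$ near $\dba$ lies in $N_{T_P(\db)}(w)$ exactly when $d^*-\dba\in N_K(w)$, for $w$ small. This uses that the normals near $\dba$ of the polyhedral cone $T_P(\db)$ lie in faces of $N_P(\db)$ containing $\dba$, together with $N_K(w)=N_{T_P(\db)}(w)+[\dba]$-type facts. This is the standard argument of Dontchev--Rockafellar, and I would cite it (e.g., \cite[Lemma 2E.4]{DoRo14} or Robinson's reduction) rather than redo it. This gives the middle equality in \eqref{eqna-a-2}.

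The last equality in \eqref{eqna-a-2} is the standard description of $\gph N_K$ for the closed convex cone $K$: $e^*\in N_K(e)$ if and only if $e\in K$, $e^*\in K^\circ$ and $\skalp{e,e^*}=0$.

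\emph{Step 2: tangent cone.} Since $\gph N_P-(\db,\dba)$ coincides near $0$ with the closed cone $\gph N_K$, its tangent cone at $0$ equals that cone. Intersecting with $V$ gives the first equality in \eqref{eqna-a-2}, and globalizing by the cone property yields \eqref{equal-a-1}.

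\emph{Step 3: second-order tangent set.} Fix $(e,e^*)\in\gph N_K$. In the definition of $T^2$, the points $(\db,\dba)+t_k(e,e^*)+\frac12t_k^2w_k$ lie in $(\db,\dba)+V$ for large $k$. By \eqref{eqna-a-2}, membership in $\gph N_P$ is therefore equivalent to $t_k(e,e^*)+\frac12t_k^2w_k\in\gph N_K$. Because $\gph N_K$ is a cone, this is equivalent to $(e,e^*)+\frac12t_kw_k\in\gph N_K$. Hence $T^2_{\gph N_P}((\db,\dba),(e,e^*))$ equals $\{w\mid \exists t_k\downarrow0,\ w_k\to w:\ (e,e^*)+\tfrac12 t_kw_k\in\gph N_K\}=T_{\gph N_K}(e,e^*)$. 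Finally, I apply Step 2 to the polyhedral cone $K$ at the point $(e,e^*)$, using $\K_K(e,e^*)=T_K(e)\cap[e^*]^\perp$, which gives $T_{\gph N_K}(e,e^*)=\gph N_{\K_K(e,e^*)}$.

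The main obstacle is Step 1, the reduction identity $\gph N_P-(\db,\dba)=\gph N_K$ locally, because it needs the face structure of $N_P(\db)$ around $\dba$. I would quote it from \cite{DoRo14} instead of proving it.
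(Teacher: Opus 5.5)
Your proposal is correct and follows essentially the same route as the paper: the reduction lemma \cite[Lemma 2E.4]{DoRo14} gives the middle equality, the standard description of $\gph N_K$ for a closed convex cone $K$ gives the last one, conicity globalizes, and \eqref{equal-a-1} applied to $K$ at $(e,e^*)$ finishes. The differences are minor: you get the first equality in \eqref{eqna-a-2} from the local coincidence with the closed cone $\gph N_K$ rather than from \eqref{Eqtangent}, and you check $T^2_{\gph N_P}\big((\db,\dba),(e,e^*)\big)=T_{\gph N_K}(e,e^*)$ by a direct rescaling argument where the paper cites \cite[Proposition 2.11]{BeGfrYeZhangZhou}.
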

\begin{proof}
In \eqref{eqna-a-2}, the first equality follows from (\ref{Eqtangent}) since  $\gph N_P$ is polyhedral, the second one follows from the reduction lemma \cite[Lemma 2E.4]{DoRo14}, and the last equality is given by \cite[Proposition 2A.3]{DoRo14}. The formula  (\ref{equal-a-1}) is obtained by (\ref{eqna-a-2}) since the sets involved in (\ref{equal-a-1}) are all cones. Since $\gph N_P$ is a \poly set and $\K_P(\db,\dba)$ is convex polyhedral, we can obtain \eqref{EqSecOrdTanGphN_P} from \cite[Proposition 2.11]{BeGfrYeZhangZhou} and \eqref{equal-a-1}.
\end{proof}

Recall that a face of a convex set $C$ is a convex subset $C'$ of $C$ such that every (closed) line segment in $C$ with a relative interior point in $C'$ has both
endpoints in $C'$.  We denote the collection of all faces of $C$ by $\F(C)$. A convex polyhedral set $P$ has  finitely many faces which itself are convex polyhedral sets and therefore closed. For a convex polyhedral cone $K$, the faces are exactly the sets of the form $K\cap [v^*]^\perp$ for some $v^*\in K^\circ$, and there holds $\lin \F=\lin K$ for each $\F\in \F(K)$.

The following theorem is a consequence of \cite[Proof of Theorem 2]{DoRo96} and \cite[Theorem 2.12]{GfrOut16}.
\begin{theorem}\label{ThNormalConePoly}
Let $P\subseteq \R^l$ be a convex polyhedral set and $(\bar d, \bar d^*)\in {\rm gph}N_P$. Then
  \[\widehat N_{\gph N_P}(\db,\dba)=\K_P(\db,\dba)^\circ\times \K_P(\db,\dba)\]
and for every  $(e,e^*)\in T_{\gph N_P}(\db,\dba)$
the directional limiting normal cone $N_{\gph N_P}((\db,\dba); (e,e^\ast))$ is the union of all product sets $K^\circ\times K$ associated with cones $K$ of the form $\F_1-\F_2$, where $\F_1$ and $\F_2$ are closed faces of the critical cone $\K_P(\db,\dba)$ satisfying $e\in \F_2\subseteq \F_1\subseteq [e^\ast]^\perp$.
\end{theorem}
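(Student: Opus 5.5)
The statement is Theorem \ref{ThNormalConePoly}. I would prove its two parts separately: the regular normal cone by polarity, and the directional limiting normal cone by local reduction to a cone followed by enumeration of faces.

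\textbf{Regular normal cone.} The regular normal cone depends only on the tangent cone: $\widehat N_C(z)=T_C(z)^\circ$ for any closed set. By \eqref{equal-a-1}, $T_{\gph N_P}(\db,\dba)=\gph N_K$ with $K:=\K_P(\db,\dba)$. It therefore suffices to show
\[
(\gph N_K)^\circ=K^\circ\times K .
\]
For the inclusion ``$\supseteq$'', take $(v,w)\in K^\circ\times K$ and $(e,e^*)\in\gph N_K$. Then $\langle v,e\rangle\le 0$ because $e\in K$, and $\langle w,e^*\rangle\le 0$ because $e^*\in K^\circ$. For ``$\subseteq$'', use the special elements $(e,0)$ with $e\in K$ and $(0,e^*)$ with $e^*\in K^\circ$, both of which lie in $\gph N_K$. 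Testing $(v,w)$ against them gives $v\in K^\circ$ and $w\in K^{\circ\circ}=K$.

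\textbf{Directional limiting normal cone.} By \eqref{eqna-a-2}, near $(\db,\dba)$ the set $\gph N_P$ coincides with $(\db,\dba)+\gph N_K$. Directional limiting normals at $(\db,\dba)$ in direction $(e,e^*)$ are built from points $(\db,\dba)+t_k(e_k,e_k^*)$ with $t_k\downarrow 0$. Since $\gph N_K$ is a cone, regular normals are invariant under positive scaling of the base point. So $N_{\gph N_P}((\db,\dba);(e,e^*))$ is exactly the set of limits of $\widehat N_{\gph N_K}(e_k,e_k^*)$ with $(e_k,e_k^*)\to(e,e^*)$, i.e. $N_{\gph N_K}(e,e^*)$. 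The problem is now reduced to computing the limiting normal cone of $\gph N_K$ at $(e,e^*)$.

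The first part, applied to $K$ at a point $(e',e'^*)$, gives $\widehat N_{\gph N_K}(e',e'^*)=K'^\circ\times K'$ with $K':=\K_K(e',e'^*)$. The main work, and the expected obstacle, is the combinatorial identification of these critical cones with cones of the form $\F_1-\F_2$.

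1. Show that for a polyhedral cone $K$ and $(e',e'^*)\in\gph N_K$, the critical cone is $\F_1-\F_2$. Here $\F_1=K\cap[e'^*]^\perp$, which is a face. Also $\F_2$ is the minimal face of $K$ containing $e'$ (i.e. with $e'$ in its relative interior). The inclusion $\F_2\subseteq\F_1$ holds because $\langle e',e'^*\rangle=0$ and $e'^*\in K^\circ$. The identity follows from $T_K(e')=K-\F_2=K+\mathrm{span}\,\F_2$ intersected with $[e'^*]^\perp$: every $x=k-f$ in this intersection satisfies $\langle k,e'^*\rangle=\langle f,e'^*\rangle=0$, so $k\in\F_1$.

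2. Use that a polyhedral cone has finitely many faces. Along a sequence $(e_k,e_k^*)\to(e,e^*)$ one can then pass to a subsequence on which the pair $(\F_1,\F_2)$ is constant. Taking limits, $e\in\F_2$ since faces are closed. Also $e^*\perp\F_1$ as a limit of $e_k^*$ that are orthogonal to $\F_1$. So $\F_1\subseteq[e^*]^\perp$, and the limit set is contained in the stated union.

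3. For the reverse inclusion, given faces $e\in\F_2\subseteq\F_1\subseteq[e^*]^\perp$, construct the approximating points explicitly:
\[
e_k:=e+\tfrac1k\,\hat f_2,\qquad e_k^*:=e^*+\tfrac1k\,\hat v,
\]
where $\hat f_2\in\mathrm{ri}\,\F_2$ and $\hat v\in K^\circ$ is chosen with $K\cap[\hat v]^\perp=\F_1$. One needs $e^*\in K^\circ$ and $e^*\perp\F_1$, so that $K\cap[e_k^*]^\perp=\F_1$ and $e_k\in\mathrm{ri}\,\F_2$. Then $(e_k,e_k^*)\in\gph N_K$, and the critical cone at this point is exactly $\F_1-\F_2$. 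Hence $(\F_1-\F_2)^\circ\times(\F_1-\F_2)$ lies in the directional limiting normal cone.

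The care needed in step 3 is in verifying these face identities along the perturbed sequence. This is where I would rely on the cited results \cite[Proof of Theorem 2]{DoRo96} and \cite[Theorem 2.12]{GfrOut16}.
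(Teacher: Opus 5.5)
Your proposal is correct and amounts to a complete proof. The paper gives no argument of its own for this theorem; it only invokes \cite[Proof of Theorem 2]{DoRo96} and \cite[Theorem 2.12]{GfrOut16}. Your argument reconstructs what those references supply, in three steps:
\begin{itemize}
\item Polarity $\widehat N_C(z)=T_C(z)^\circ$ together with \eqref{equal-a-1} yields the regular normal cone.
\item The reduction \eqref{eqna-a-2}, combined with conic homogeneity, turns the directional limiting normal cone at $(\db,\dba)$ into the ordinary limiting normal cone of $\gph N_K$ at $(e,e^*)$.
\item The identity $\K_K(e',e'^*)=\F_1-\F_2$, with $\F_1=K\cap[e'^*]^\perp$ and $\F_2$ the minimal face containing $e'$, plus finiteness of the set of faces, gives the union formula. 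This is essentially the Dontchev--Rockafellar face computation, here carried into the directional setting.
\end{itemize}
There is no circularity, since Lemma \ref{LemReduction} is established independently of this theorem. Your route buys self-containedness and shows exactly where polyhedrality enters: finitely many faces, closedness of $\F_1-\F_2$, and no closure needed in $T_K(e')$. The paper's citation buys brevity.

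You do not need the literature for the checks you defer in step 3; each is a one-liner.
\begin{itemize}
\item Since $\F_2$ is a convex cone, $\F_2+\mathrm{ri}\,\F_2\subseteq \mathrm{ri}\,\F_2$. Hence $e_k\in\mathrm{ri}\,\F_2$, and $\F_2$ is the minimal face containing $e_k$.
\item $e_k^*\in K^\circ$ as a sum of elements of $K^\circ$. Also $\skalp{e_k^*,e_k}=0$, because $\skalp{e^*,e}=0$ and $e,\hat f_2\in\F_2\subseteq\F_1\subseteq[e^*]^\perp\cap[\hat v]^\perp$. Hence $(e_k,e_k^*)\in\gph N_K$.
\item For $x\in K$, both $\skalp{e^*,x}$ and $\skalp{\hat v,x}$ are nonpositive. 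So $\skalp{e_k^*,x}=0$ if and only if $x\in K\cap[\hat v]^\perp\cap[e^*]^\perp=\F_1\cap[e^*]^\perp=\F_1$.
\end{itemize}

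In step 1 you should also justify $T_K(e')=K-\F_2$. For a polyhedral cone, $T_K(e')=K-\R_+e'$. Moreover $-\F_2\subseteq K-\R_+e'$: for $f\in\F_2$ and small $\varepsilon>0$, $(1+\varepsilon)e'-\varepsilon f\in\F_2$ because $e'\in\mathrm{ri}\,\F_2$.
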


\subsection{Variational analysis of the disjunctive system}
In this section we summarize some results on first and second-order variational analysis for the disjunctive system $\Gamma:=\{z\in\R^n\mv G(z)\in D\}$ for a polyhedral set  $D\subseteq\R^l$ and a continuously differentiable mapping $G:\R^n \to\R^l$ obtained in \cite{BeGfrYeZhangZhou}.
Denote the linearization cone of $\Gamma$ at $\zb$ by
\[L_\Gamma(\zb):=\{w \in \R^n \mv\nabla G(\zb)w\in T_D(G(\zb))\}.\] Note that for every $\zb \in \Gamma$, $T_\Gamma(\zb)\subseteq L_\Gamma(\zb)$ but the reverse inclusion usually needs some assumptions.
\begin{lemma}\cite[Theorem 4.1]{BeGfrYeZhangZhou}\label{ThDirNonDegen}
 Consider a feasible point $\zb\in \Gamma$ and a direction  $\wb\in  L_\Gamma(\zb)$
    and assume that the directional nondegeneracy condition
    \begin{equation}\label{EqDirNonDegen1}\nabla G(\zb)^Tp^*=0,\ p^*\in \big(N_D(G(\zb);\nabla G(\zb)\wb)\big)^+\ \Rightarrow\ p^*=0\end{equation}
    is fulfilled. Then the set-valued map  $z\tto G(z)-D$ is metrically subregular in direction $\wb$ at $(\zb,0)$, $\wb\in T_\Gamma(\zb)$  and the following equalities hold:
    \begin{equation}\label{relation-normal}
  N_\Gamma(\zb;\wb)=\nabla G(\zb)^TN_D(G(\zb);\nabla G(\zb)\wb)=\nabla G(\zb)^T N_{T_D(G(\zb))}(\nabla G(\zb)\wb)=N_{T_\Gamma(\bar z)}(\wb).
    \end{equation}
\end{lemma}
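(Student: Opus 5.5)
The approach is to reduce everything to the polyhedral local structure of $D$ and then apply directional metric subregularity results from the literature. The proof has four steps.

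\emph{Step 1: localize $D$.} Set $\bar p:=G(\zb)$ and $\bar v:=\nabla G(\zb)\wb$. By \eqref{EqRedPoly} in Lemma \ref{relation-tangent}, $D$ coincides near $\bar p$ with $\bar p+T_D(\bar p)$. Hence the limiting normal cones of $D$ at points near $\bar p$ equal those of the cone $T_D(\bar p)$ at the shifted points. Now $T_D(\bar p)$ is a polyhedral cone, so its normal cones are invariant under positive scaling of the base point. Therefore
\[
N_D(\bar p;\bar v)=N_{T_D(\bar p)}(0;\bar v)=N_{T_D(\bar p)}(\bar v).
\]
The second equality uses Lemma \ref{relation-tangent} applied to $T_D(\bar p)$ at $0$: at points $t\bar v'$ with $\bar v'$ near $\bar v$, the normal cones to $T_D(\bar p)$ are normal cones to $T_D(\bar p)$ near $\bar v$, and these are contained in $N_{T_D(\bar p)}(\bar v)$ by outer semicontinuity and local conicity. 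The reverse inclusion uses $\widehat N$ at $t\bar v$. This gives the middle equality in \eqref{relation-normal}.

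\emph{Step 2: metric subregularity.} Condition \eqref{EqDirNonDegen1} implies in particular the directional MS criterion of Gfrerer (FOSCMS in direction $\wb$): $\nabla G(\zb)^Tp^*=0$ with $p^*\in N_D(\bar p;\bar v)$ forces $p^*=0$. By \cite{Gfr13a}, $z\tto G(z)-D$ is therefore metrically subregular in direction $\wb$. Because $\wb\in L_\Gamma(\zb)$, we have $\bar v\in T_D(\bar p)$, so $\bar p+t\bar v\in D$ for small $t$ by Step 1. Consequently $\dist{G(\zb+t\wb),D}=o(t)$, and directional MS yields points $z_t\in\Gamma$ with $\|z_t-\zb-t\wb\|=o(t)$. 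Hence $\wb\in T_\Gamma(\zb)$.

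\emph{Step 3: normal cone inclusions.} Directional MS gives the standard inclusion $N_\Gamma(\zb;\wb)\subseteq\nabla G(\zb)^TN_D(\bar p;\bar v)$. For the reverse inclusion I plan to use the full nondegeneracy with the span $(N_D(\bar p;\bar v))^+$. Set $K:=T_D(\bar p)$. Near $\bar v$, the set $K$ is a union of convex polyhedral pieces whose normal cones at $\bar v$ lie in $(N_D)^+$. Injectivity of $\nabla G(\zb)^T$ on this span lets me treat $\Gamma$ locally along $\wb$ like a nondegenerate (transversal) preimage, giving the exact chain rule $\widehat N_\Gamma(z)=\nabla G(z)^T\widehat N_D(G(z))$ at points $z=\zb+t w'$ with $w'$ near $\wb$. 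Nondegeneracy persists there because, by Lemma \ref{relation-tangent}, the relevant normal spans shrink: $T_D(G(z))^+$ grows relative to the limit, so their polars shrink. Taking limits then yields equality.

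\emph{Step 4: the last equality.} The same reasoning applied to the linearized set $T_\Gamma(\zb)=L_\Gamma(\zb)=\{w\mid\nabla G(\zb)w\in K\}$ gives the final equality. Here $L_\Gamma(\zb)=T_\Gamma(\zb)$ near $\wb$ follows from Step 2 applied to nearby directions, and the linear map has the same nondegeneracy at $\wb$, so $N_{T_\Gamma(\zb)}(\wb)=\nabla G(\zb)^TN_K(\bar v)$.

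The main obstacle is the reverse inclusion in Step 3. It requires showing that nondegeneracy, which is assumed only on the span of the directional limiting normals, propagates to the Fréchet normal cones at all nearby points along the direction $\wb$. This is done through the span inclusion $T_D(G(z))^+\supseteq$ and the corresponding polar relation from Lemma \ref{relation-tangent}.
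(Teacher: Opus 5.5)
Steps 1 and 2 are fine. So is the inclusion $N_\Gamma(\zb;\wb)\subseteq\nabla G(\zb)^TN_D(\bar p;\bar v)$ in Step 3, where $\bar p:=G(\zb)$ and $\bar v:=\nabla G(\zb)\wb$. These parts only use the weaker condition with $N_D(\bar p;\bar v)$ in place of its span, Gfrerer's directional FOSCMS criterion, and the chain rule that holds under directional metric subregularity. (The paper gives no proof of this lemma; it imports it from the companion paper.) The gap is the reverse inclusion. That is the only place where the span in \eqref{EqDirNonDegen1} is really needed, and your plan does not deliver it.

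Suppose you know $\widehat N_\Gamma(z)=\nabla G(z)^T\widehat N_D(G(z))$ at points $z\in\Gamma$ along $\wb$ and pass to the limit. This only produces limits of $\nabla G(z_k)^Tp_k$ with $p_k\in\widehat N_D(G(z_k))$. The ``$\supseteq$'' half of that chain rule holds trivially, and the ``$\subseteq$'' half merely reproduces the inclusion you already have. An element $p\in N_D(\bar p;\bar v)$, however, is a limit of Fr\'echet normals at arbitrary points $\bar p+t_kv_k\in D$. Since $\nabla G(\zb)$ is not onto, these points need not be of the form $G(z_k)$ with $z_k\in\Gamma$. For instance, with $D=\gph N_{\R_-}$ and $G(z)=(z,0)$, the points $(0,b)$ with $b>0$ are never reached, and the formula fails there. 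What is missing is an \emph{attainment} argument, built from three ingredients.

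\textbf{(a)} Dualizing \eqref{EqDirNonDegen1} gives $\range\nabla G(\zb)+S^\perp=\R^l$, where $S:=\Spanb{N_D(\bar p;\bar v)}$.

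\textbf{(b)} Let $C:=T_{T_D(\bar p)}(\bar v)$ be the local model. Then $S^\perp\subseteq\lin{C}$, so Fr\'echet normals of $C$ are invariant under translations by $S^\perp$. To see this, suppose some $c+\tau s$ leaves $C$, with $c\in C$ and $s\perp S$. Projecting $s$ onto the tangent cone of $C$ at the exit point yields a Fr\'echet normal $\nu$ of $C$ at a nearby point with $\skalp{\nu,s}>0$. This contradicts $\nu\in N_C(0)=N_D(\bar p;\bar v)\subseteq S$.

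\textbf{(c)} By polyhedrality, each $p\in N_C(0)$ lies in $\widehat N_C(u)$ for some $u\in C$. Write $u\in\nabla G(\zb)h+S^\perp$. Let $\Pi_S$ be the orthogonal projection onto $S$; $\Pi_S\circ G$ has onto derivative by (a). Apply the implicit function theorem to it to obtain $z_t=\zb+t\wb+t^2h+r_t\in\Gamma$ with $r_t=O(t^2)$ and $G(z_t)-\bar p\in t\bar v+t^2\nabla G(\zb)h+S^\perp$. By (b), $p\in\widehat N_D(G(z_t))$. Hence $\nabla G(z_t)^Tp\in\widehat N_\Gamma(z_t)$, and therefore $\nabla G(\zb)^Tp\in N_\Gamma(\zb;\wb)$.

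Step 4 inherits the same gap. For the linear system the implicit-function step is unnecessary, but (a) and (b) are still needed.

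Two smaller points. First, your claim that the convex pieces of $T_D(\bar p)$ have normal cones inside $\Spanb{N_D}$ is false for arbitrary decompositions (write $D=\R^2$ as two half-planes). It becomes true only after splitting off $S^\perp$ via (b). Second, persistence of nondegeneracy should be argued from $N_D(G(z))\subseteq N_D(\bar p;\bar v)$ for $z\in\Gamma$ in a directional neighbourhood of $\wb$, which follows from polyhedrality and conicity. It should not be argued from tangent spans: Lemma \ref{relation-tangent} says $T_D(G(z))^+\subseteq T_D(\bar p)^+$, the opposite of what you wrote. In any case, spans of Fr\'echet normal cones are not orthogonal complements of tangent spans.
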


\begin{assumption}\label{AssA2}
  The constraint mapping $z\tto G(z)-D$ is metrically subregular in direction $\wb$ at $(\zb,0)$.
\end{assumption}

Given $\bar z\in \Gamma$ and $(\wb, z^*)$, we denote the set of $S$- and $M$-multipliers associated with $(\zb, z^*)$ in direction $\wb$, respectively, by
\begin{align*}
\Lambda_{z^*}^s(\zb; \wb):=\{p^*\in \widehat N_{T_D(G(\zb))}(\nabla G(\zb)\wb)\mv z^*=\nabla G(\zb)^Tp^*\},\\
 \Lambda_{z^*}(\zb; \wb):=\{p^*\in N_{T_D(G(\zb))}(\nabla G(\zb)\wb)\mv z^*=\nabla G(\zb)^Tp^*\}.
 \end{align*}
\begin{lemma}\cite[Proposition 2.14 and Theorem 5.1]{BeGfrYeZhangZhou}\label{ThSOVAGamma}
Let $\zb\in\Gamma$ and $\wb\in L_\Gamma(\zb)$. Assume that the constraint mapping $z\tto G(z)-D$ is metrically subregular in direction $\wb$ at $(\zb,0)$. Then
\begin{equation}\label{EqSecOrdTanGamma}
T^2_\Gamma(\zb;\wb)=\{v\,|\, \nabla G(\zb)v+\nabla^2G(\zb)(\wb,\wb)\in T^2_D(G(\zb);\nabla G(\zb)\wb)\}.
\end{equation}
For all $z^*\in [\wb]^\perp$ there holds
  \[
    {\rm d^2}\delta_\Gamma(\zb;z^*)(\wb)=-\sigma_{T^2_\Gamma(\zb;\wb)}(z^*).
  \]
Moreover, $\widehat N^p_\Gamma(\zb;\wb)= \widehat N_{T_\Gamma(\zb)}(\wb)$ and for every $z^*\in \widehat N^p_\Gamma(\zb;\wb)$ we have
  \[\inf_{p^*\in \Lambda_{z^*}(\zb; \wb)}\skalp{p^*,\nabla^2 G(\zb)(\wb,\wb)}\leq {\rm d^2}\delta_\Gamma(\zb;z^*)(\wb)\leq
  \sup_{p^*\in \Lambda_{z^*}(\zb; \wb)}\skalp{p^*,\nabla^2 G(\zb)(\wb,\wb)}\]
  and
  \[
    {\rm d^2}\delta_\Gamma(\zb;z^*)(\wb)\geq\sup_{p^*\in \Lambda^s_{z^*}(\zb; \wb)}\skalp{p^*,\nabla^2 G(\zb)(\wb,\wb)}
  \]
  Finally,
   \begin{align*}
    \hat\sigma_{T^2_\Gamma(\zb;\wb)}(z^*) &\geq -\sup_{p^*\in \Lambda_{z^*}(\zb; \wb)}\skalp{p^*,\nabla^2 G(\zb)(\wb,\wb)},\ \ \ \forall z^*,\\
    \hat\sigma_{T^2_\Gamma(\zb;\wb)}(z^*) &\leq -\inf_{p^*\in \Lambda_{z^*}(\zb; \wb)}\skalp{p^*,\nabla^2 G(\zb)(\wb,\wb)},\ \ \ \forall z^*\in\dom \hat\sigma_{T^2_\Gamma(\zb;\wb)}.
  \end{align*}
 \end{lemma}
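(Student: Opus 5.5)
The plan is to reduce everything to the polyhedral structure of $D$ via directional metric subregularity (Assumption~\ref{AssA2}). After that, each claim becomes a statement about the union of convex polyhedral sets $T^2_D(G(\zb);\nabla G(\zb)\wb)$. By the polyhedral reduction of Lemma~\ref{relation-tangent}, this set equals $T_{T_D(G(\zb))}(\nabla G(\zb)\wb)$, the same identity used in \eqref{EqSecOrdTanGphN_P}.

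\textbf{Step 1: formula \eqref{EqSecOrdTanGamma}.} The inclusion ``$\subseteq$'' is a second-order Taylor expansion. If $\zb+t_k\wb+\frac12t_k^2v_k\in\Gamma$, then
\[G(\zb)+t_k\nabla G(\zb)\wb+\tfrac12t_k^2\big(\nabla G(\zb)v_k+\nabla^2G(\zb)(\wb,\wb)\big)+o(t_k^2)\in D.\]
For ``$\supseteq$'', take $v$ in the right-hand side with witnessing sequences $t_k$, $q_k$. Set $z_k:=\zb+t_k\wb+\frac12t_k^2v$. Taylor's expansion gives ${\rm dist}(G(z_k),D)=o(t_k^2)$. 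Since $z_k$ lies in a directional neighborhood $\zb+V_{\delta,\rho}(\wb)$, metric subregularity in direction $\wb$ yields $\tilde z_k\in\Gamma$ with $\|\tilde z_k-z_k\|\le\kappa\, o(t_k^2)$. Hence $v\in T^2_\Gamma(\zb;\wb)$. As a by-product, $T_\Gamma(\zb)=L_\Gamma(\zb)$ near $\wb$, and in particular $T_\Gamma(\zb)$ is locally polyhedral around $\wb$.

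\textbf{Step 2: the second subderivative and the proximal normals.} Start from \eqref{indicatorf}. For $z^*\perp\wb$, write a feasible $u'=\wb+\frac t2 v$, so that $-2\skalp{z^*,u'}/t=-\skalp{z^*,v}$. Taking the infimum over $v\in T^2_\Gamma(\zb;\wb)$ gives ``$\le$''. For ``$\ge$'', take a minimizing sequence $(t_k,u_k)$ and distinguish two cases.
\begin{itemize}
\item If $(u_k-\wb)/t_k$ is bounded, a cluster point lies in $T^2_\Gamma$.
\item If it is unbounded, normalize it. Using the polyhedrality from Step 1, show that its direction lies in $T_{T_\Gamma(\zb)}(\wb)$ (the recession directions of the second-order set). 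This forces $\sigma_{T^2_\Gamma}(z^*)=+\infty$ whenever the quotient tends to $-\infty$.
\end{itemize}
The identity $\widehat N^p_\Gamma(\zb;\wb)=\widehat N_{T_\Gamma(\zb)}(\wb)$ follows from the same local description. Near $\zb$ in direction $\wb$, $\Gamma$ agrees with its linearization up to $O(\|z-\zb\|^2)$. Quadratic error terms are exactly what the proximal constant $\gamma$ absorbs, and polyhedrality of $T_\Gamma(\zb)$ near $\wb$ turns regular normals into proximal ones.

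\textbf{Step 3: multiplier bounds.} Write $T^2_D(\cdot)=\bigcup_i Q_i$ with $Q_i$ convex polyhedral. Then, by Step 1,
\[\sigma_{T^2_\Gamma}(z^*)=\max_i\sup\{\skalp{z^*,v}\mid \nabla G(\zb)v+\nabla^2G(\zb)(\wb,\wb)\in Q_i\}.\]
Apply LP duality piece by piece. Each finite dual value equals $-\skalp{p^*,\nabla^2G(\zb)(\wb,\wb)}$ for some $p^*\in\widehat N_{Q_i}(\cdot)$ with $\nabla G(\zb)^Tp^*=z^*$, and such $p^*$ belong to $N_{T_D(G(\zb))}(\nabla G(\zb)\wb)$. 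This yields the two-sided bounds via $\Lambda_{z^*}(\zb;\wb)$. The feasibility of the relevant pieces is guaranteed because $z^*\in\widehat N_{T_\Gamma(\zb)}(\wb)$.

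For the $S$-multiplier bound, take $p^*\in\widehat N_{T_D}(\nabla G(\zb)\wb)$. Then $p^*$ lies in the polar of $T_{T_D}(\nabla G(\zb)\wb)=T^2_D$. Hence for every $v\in T^2_\Gamma$,
\[\skalp{z^*,v}=\skalp{p^*,\nabla G(\zb)v}\le-\skalp{p^*,\nabla^2G(\zb)(\wb,\wb)},\]
which gives the lower bound on ${\rm d}^2\delta_\Gamma$. The estimates for $\hat\sigma_{T^2_\Gamma}$ are proved the same way. Replace the support function by infima of $\skalp{\tilde\lambda,v}$ over $\tilde\lambda\in\widehat N_{T^2_\Gamma}(v)$. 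Compute these normals piecewise through the affine preimage, which is legitimate because the linear system is subregular by polyhedrality (Hoffman's lemma), then pass to the limit $\tilde\lambda\to z^*$ using closedness of $N_{T_D}$.

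\textbf{Main obstacle.} I expect the unbounded case in Step 2 to be the main obstacle, that is, proving ``$\ge$'' in ${\rm d}^2\delta_\Gamma=-\sigma_{T^2_\Gamma}$ when $(u_k-\wb)/t_k$ blows up. There I would first try to rescale by $s_k:=\|u_k-\wb\|$ and use the local conic structure \eqref{EqRedPoly} of $D$ to show that the limiting direction is a recession direction of $T^2_\Gamma(\zb;\wb)$.
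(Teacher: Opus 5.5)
The paper does not prove this lemma; it imports it from the companion paper, so I assess your plan on its own. Step 1 and the $\Lambda^s$-bound are correct. The proximal-normal identity is plausible: every $z\in\Gamma$ near $\zb$ satisfies $\dist{z-\zb,L_\Gamma(\zb)}=O(\norm{z-\zb}^2)$ by Hoffman's lemma, and metric subregularity is only needed to identify $T_\Gamma(\zb)$ with $L_\Gamma(\zb)$ near $\wb$. Write $A:=\nabla G(\zb)$, $c:=\nabla^2G(\zb)(\wb,\wb)$ and $K:=T_{T_D(G(\zb))}(A\wb)=T^2_D(G(\zb);A\wb)=K_1\cup\dots\cup K_s$, where each $K_i$ is a convex polyhedral cone.

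\textbf{Gap in Step 2.} Let $h$ be the normalized limit of $v_k:=2(u_k-\wb)/t_k$. Your recession argument can at best give $\sigma_{T^2_\Gamma(\zb;\wb)}(z^*)=+\infty$ when $\skalp{z^*,h}>0$ and $h$ is a recession direction of a nonempty piece. It says nothing if $\skalp{z^*,h}=0$. It also says nothing when $-\skalp{z^*,v_k}$ has a finite limit while $\norm{v_k}\to\infty$, which is the generic case. No case distinction is needed.
\begin{itemize}
\item Apply \eqref{Eqtangent} to $D$ at $G(\zb)$, and then to the polyhedral cone $T_D(G(\zb))$ at $A\wb$. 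The relation $G(\zb+t_ku_k)\in D$ then yields $Av_k+c+\varepsilon_k\in K$ with $\varepsilon_k\to0$.
\item Along a subsequence these points lie in one $K_i$. Then $S_i:=\{v\mid Av+c\in K_i\}\neq\emptyset$, because $c+\varepsilon_k\in K_i-A(\R^n)$, which is a closed cone.
\item Hoffman's lemma gives $\tilde v_k\in S_i\subseteq T^2_\Gamma(\zb;\wb)$ (by Step 1) with $\norm{\tilde v_k-v_k}\le\beta\norm{\varepsilon_k}$.
\item Hence $-\skalp{z^*,v_k}\ge-\sigma_{T^2_\Gamma(\zb;\wb)}(z^*)-\beta\norm{z^*}\norm{\varepsilon_k}$.
\end{itemize}
So your ``main obstacle'' disappears.

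\textbf{Gap in Step 3.} You claim that the LP dual multipliers of the pieces lie in $N_{T_D(G(\zb))}(A\wb)$. This is false: a normal to one convex branch need not be a limiting normal to the union. This is the B- versus M-multiplier phenomenon.

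Concretely, take $D=\gph N_{\R_-}$, $G(z)=(z,z)$, $\zb=0$, $\wb=0$, $z^*=2$. Then $\Gamma=\{0\}$, metric subregularity holds, and $c=0$. The branch $\R_-\times\{0\}$ has the optimal dual multiplier $(1,1)\in\R_+\times\R$. But $(1,1)\notin N_D(0,0)=(\{0\}\times\R)\cup(\R\times\{0\})\cup(\R_+\times\R_-)$. So your argument does not produce elements of $\Lambda_{z^*}(\zb;\wb)$; you must show that a suitable multiplier can be selected. One way:
\begin{itemize}
\item The conditions $z^*\in\widehat N_{T_\Gamma(\zb)}(\wb)=(T_{T_\Gamma(\zb)}(\wb))^\circ$ and $T_{T_\Gamma(\zb)}(\wb)=\{h\mid Ah\in K\}$ make every nonempty piece an LP that is bounded above. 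This boundedness, not feasibility, is what the normal-cone condition buys you.
\item Nonemptiness of $T^2_\Gamma(\zb;\wb)$ follows from metric subregularity applied at $\zb+t\wb$.
\item Hence $\sigma:=\sigma_{T^2_\Gamma(\zb;\wb)}(z^*)$ is attained at some $\bar v$, and $z^*\in\widehat N_{T^2_\Gamma(\zb;\wb)}(\bar v)$.
\item The polyhedral multifunction $v\tto Av+c-K$ is metrically subregular (Robinson). The limiting preimage rule then gives $p^*\in N_K(y)$, with $y:=A\bar v+c$ and $A^Tp^*=z^*$.
\item Since $K$ is a closed cone, $N_K(y)\subseteq N_K(0)=N_{T_D(G(\zb))}(A\wb)$ and $\skalp{p^*,y}=0$.
\item Therefore $p^*\in\Lambda_{z^*}(\zb;\wb)$ and $\sigma=\skalp{z^*,\bar v}=-\skalp{p^*,c}$, which gives both $\Lambda$-bounds at once.
\end{itemize}

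The same preimage rule, rather than a piecewise computation, handles the $\hat\sigma$-estimates. There you must additionally choose the multipliers $p_k\in N_K(Av_k+c)$ with $A^Tp_k=\tilde\lambda_k$ to be bounded before passing to the limit. Closedness of $N_{T_D}$ alone does not give this when $A^T$ has a kernel. Since $N_K(\cdot)$ takes finitely many values on $K$, each a finite union of convex polyhedral cones, you can fix one such cone along a subsequence. Hoffman's lemma then gives $\norm{p_k}\le\beta\norm{\tilde\lambda_k}$. This is harmless because the value $\skalp{\tilde\lambda_k,v_k}=-\skalp{p_k,c}$ does not depend on the choice of $p_k$.
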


\begin{corollary}\cite[Corollary 5.8]{BeGfrYeZhangZhou}\label{CorDirNonDegen}
Let $\zb\in\Gamma$ and $\wb\in L_\Gamma(\zb)$. Under the directional nondegeneracy condition
 (\ref{EqDirNonDegen1}), the following statements hold.
 \begin{itemize}
\item[\rm (i)]  $\dom \hat\sigma_{T^2_\Gamma(\zb;\wb)}=\nabla G(\zb)^TN_{T_D(G(\zb))}(\nabla G(\zb)\wb)=N_\Gamma(\zb;\wb)$, and
 for every $z^*\in \dom \hat\sigma_{T^2_\Gamma(\zb;\wb)}$ the set $\Lambda_{z^*}(\zb; \wb)$ is a singleton $\{p_0^*\}$ and
\[
   -\hat\sigma_{T^2_\Gamma(\zb;\wb)}(z^*)=\skalp{p_0^*,\nabla^2 G(\zb)(\wb,\wb)}.
\]
 \item[\rm (ii)] $\dom \sigma_{T^2_\Gamma(\zb;\wb)}=\nabla G(\zb)^T\widehat N_{T_D(G(\zb))}(\nabla G(\zb)\wb)=\widehat N_\Gamma^p(\bar z, \bar w)$, and
for every $z^*\in \dom \sigma_{T^2_\Gamma(\zb;\wb)}$ the set $\Lambda^s_{z^*}(\zb; \wb)$ is a singleton $\{p_0^*\}$ and
 \[{\rm d^2}\delta_\Gamma(\zb;z^*)(\wb)=-\sigma_{T^2_\Gamma(\zb;\wb)}(z^*)=\skalp{p_0^*,\nabla^2 G(\zb)(\wb,\wb)}.\]
\end{itemize}
\end{corollary}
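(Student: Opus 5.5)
The plan is to obtain everything from Lemma~\ref{ThDirNonDegen} and Lemma~\ref{ThSOVAGamma}. The key extra ingredient is a uniqueness argument for multipliers, which comes from the directional nondegeneracy condition \eqref{EqDirNonDegen1}. Abbreviate $h:=\nabla^2G(\zb)(\wb,\wb)$, $y:=G(\zb)$, $v:=\nabla G(\zb)\wb$ and $K:=T_{T_D(y)}(v)$.

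\emph{Preparatory facts.} By Lemma~\ref{ThDirNonDegen}, \eqref{EqDirNonDegen1} yields metric subregularity in direction $\wb$, so Lemma~\ref{ThSOVAGamma} is applicable. By Lemma~\ref{relation-tangent}, $D$ coincides near $y$ with $y+T_D(y)$. Hence $N_{T_D(y)}(v)\subseteq N_D(y;v)$; in fact they are equal, which is the content of \eqref{relation-normal}.

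\emph{Uniqueness of multipliers.} If $p_1^*,p_2^*\in N_{T_D(y)}(v)$ satisfy $\nabla G(\zb)^Tp_1^*=\nabla G(\zb)^Tp_2^*$, then $p_1^*-p_2^*\in (N_D(y;v))^+$ and $\nabla G(\zb)^T(p_1^*-p_2^*)=0$. By \eqref{EqDirNonDegen1} this forces $p_1^*=p_2^*$. Thus $\Lambda_{z^*}(\zb;\wb)$, and a fortiori $\Lambda^s_{z^*}(\zb;\wb)\subseteq\Lambda_{z^*}(\zb;\wb)$, is at most a singleton.

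\emph{Second-order tangent set.} Using \eqref{EqSecOrdTanGamma} together with the polyhedral formula $T^2_D(y;v)=K$ (\cite[Proposition 2.11]{BeGfrYeZhangZhou}, as in Lemma~\ref{LemReduction}), we get
\[
T^2_\Gamma(\zb;\wb)=\{u\mv \nabla G(\zb)u+h\in K\}.
\]
Here $K$ is a polyhedral cone, i.e.\ a finite union of convex polyhedral cones $K_i$.

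\emph{Part (i).} First the domain. If $\Lambda_{z^*}(\zb;\wb)=\emptyset$, the first lower estimate in Lemma~\ref{ThSOVAGamma} gives $\hat\sigma_{T^2_\Gamma(\zb;\wb)}(z^*)\ge -\sup\emptyset=+\infty$, so $z^*\notin\dom\hat\sigma$. This gives the inclusion $\subseteq$.

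For the reverse inclusion, take $z^*=\nabla G(\zb)^Tp^*$ with $p^*\in N_{T_D(y)}(v)$. By Lemma~\ref{relation-tangent} this means $p^*\in N_K(0)$, so there are $k_j\to0$ in $K$ and $p_j\in\widehat N_K(k_j)$ with $p_j\to p^*$. I would then construct $u_j\in T^2_\Gamma(\zb;\wb)$ with $\nabla G(\zb)u_j+h$ close to $k_j$ and $\nabla G(\zb)^Tp_j\in\widehat N_{T^2_\Gamma}(u_j)$. For this I would use nondegeneracy, which acts as a local surjectivity/openness property of $u\mapsto\nabla G(\zb)u+h-K$ near the relevant points, together with the chain rule for regular normals of a preimage. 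Since $K$ is a cone, $\skalp{p_j,k_j}=0$, and therefore
\[
\skalp{\nabla G(\zb)^Tp_j,u_j}\approx-\skalp{p_j,h}\to-\skalp{p^*,h}.
\]
Consequently $\hat\sigma_{T^2_\Gamma(\zb;\wb)}(z^*)\le-\skalp{p^*,h}<\infty$.

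With the domain identified, both two-sided estimates of Lemma~\ref{ThSOVAGamma} collapse on the singleton $\{p_0^*\}$ and give $-\hat\sigma_{T^2_\Gamma(\zb;\wb)}(z^*)=\skalp{p_0^*,h}$. The identity $\nabla G(\zb)^TN_{T_D(y)}(v)=N_\Gamma(\zb;\wb)$ is exactly \eqref{relation-normal}.

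\emph{Part (ii).} Suppose $\sigma_{T^2_\Gamma(\zb;\wb)}(z^*)<\infty$. Then $z^*$ lies in the polar of each nonempty cone $\{u\mv\nabla G(\zb)u\in K_i\}$, and by Farkas' lemma this polar equals $\nabla G(\zb)^TK_i^\circ$. So $z^*=\nabla G(\zb)^Tp_i^*$ with $p_i^*\in K_i^\circ\subseteq N_{T_D(y)}(v)$. By the uniqueness argument all the $p_i^*$ coincide, and therefore
\[
p^*\in\bigcap_iK_i^\circ=\widehat N_K(0)=\widehat N_{T_D(y)}(v).
\]
Conversely, if $\Lambda^s_{z^*}(\zb;\wb)=\{p_0^*\}$, Lemma~\ref{ThSOVAGamma} gives
\[
\skalp{p_0^*,h}\le {\rm d^2}\delta_\Gamma(\zb;z^*)(\wb)=-\sigma_{T^2_\Gamma(\zb;\wb)}(z^*).
\]
Hence $\sigma_{T^2_\Gamma(\zb;\wb)}(z^*)<\infty$, which establishes the domain formula. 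The same lemma also gives ${\rm d^2}\delta_\Gamma(\zb;z^*)(\wb)\le\sup_{\Lambda_{z^*}}\skalp{p^*,h}=\skalp{p_0^*,h}$, which yields the equality.

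Finally, $\widehat N^p_\Gamma(\zb;\wb)=\widehat N_{T_\Gamma(\zb)}(\wb)$ holds by Lemma~\ref{ThSOVAGamma}. Since $T_\Gamma(\zb)=L_\Gamma(\zb)$ under subregularity, the regular-normal chain rule (exact under nondegeneracy, again via uniqueness) gives $\widehat N_{T_\Gamma(\zb)}(\wb)=\nabla G(\zb)^T\widehat N_{T_D(y)}(v)$.

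\emph{Main obstacle.} I expect the hardest step to be the finiteness $\hat\sigma<\infty$ in (i). It requires producing genuine points $u_j$ and regular normals of $T^2_\Gamma(\zb;\wb)$ approximating $(k_j,p_j)$ from the limiting normal $p^*$. My first attempt would be a Lipschitzian-stability argument for the polyhedral system $\nabla G(\zb)u+h\in K$ under the nondegeneracy hypothesis.
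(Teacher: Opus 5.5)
The reduction to $K:=T_{T_D(y)}(v)$, the uniqueness argument, the inclusion $\dom\hat\sigma_{T^2_\Gamma(\zb;\wb)}\subseteq A^TN_{T_D(y)}(v)$ (write $A:=\nabla G(\zb)$), and the final sandwich via Lemma~\ref{ThSOVAGamma} are all fine. The gap is in the two steps where you must turn a multiplier back into an actual point of $T^2_\Gamma(\zb;\wb)=\{u\mid Au+h\in K\}$.

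\textbf{Part (i).} The inclusion $A^TN_{T_D(y)}(v)\subseteq\dom\hat\sigma_{T^2_\Gamma(\zb;\wb)}$ is the heart of (i), and you leave it as a plan. The plan as stated cannot work. Asking $Au_j+h$ to be close to $k_j\to0$ forces $-h\in\range A$, which nondegeneracy does not give. For $G(z)=(z_1^2,z_1,z_2)$, $D=\R^2\times\R_-$, $\zb=0$, $\wb=(1,0)$, condition \eqref{EqDirNonDegen1} holds and $N_K(0)=\{0\}\times\{0\}\times\R_+$, yet $h=(2,0,0)\notin\range A$. Moreover, closeness to $k_j$ would not make $p_j$ a regular normal at $Au_j+h$.

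\textbf{Part (ii).} The inclusion $K_i^\circ\subseteq N_{T_D(y)}(v)$ is false for a general decomposition: $K=\R^2=\{k_1\ge0\}\cup\{k_1\le0\}$ gives $K_1^\circ=\R_-\times\{0\}$, while $N_K(0)=\{0\}$. Also, $\sigma_{T^2_\Gamma(\zb;\wb)}(z^*)<\infty$ only places $z^*$ in $\{u\mid Au\in K_i\}^\circ$ for those $i$ with $\{u\mid Au+h\in K_i\}\neq\emptyset$; the cones themselves are never empty. Hence ``all the $p_i^*$ coincide'' and $p^*\in\bigcap_iK_i^\circ$ are unjustified. Your ``exact chain rule'' for $\widehat N_{T_\Gamma(\zb)}(\wb)$ rests on the same flawed argument.

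\textbf{The missing idea} is a geometric reading of \eqref{EqDirNonDegen1}. Let $E:=\big((N_K(0))^+\big)^\perp$ and recall $N_D(y;v)=N_K(0)$.
(a) Condition \eqref{EqDirNonDegen1} says exactly $\range A+E=\R^l$.
(b) Since $K$ is a cone, $\widehat N_K(k)=\widehat N_K(tk)$ for all $t>0$; letting $t\downarrow0$ gives $\widehat N_K(k)\subseteq N_K(0)\subseteq E^\perp$. So every normal to $K$ is orthogonal to $E$. Hence $K+E=K$ (e.g.\ all Clarke subgradients of $d_K$ are orthogonal to $E$), and $\widehat N_K(k+e)=\widehat N_K(k)$ for $e\in E$.
(c) By polyhedrality, $\widehat N_K$ takes finitely many closed values, so $N_K(0)=\bigcup_{k\in K}\widehat N_K(k)$.

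For (i), given $p^*\in N_K(0)$, choose $k$ with $p^*\in\widehat N_K(k)$ and use (a) to write $k-h=Au+e$ with $e\in E$. Then $Au+h=k-e\in K$ and $p^*\in\widehat N_K(Au+h)$, so $A^Tp^*\in\widehat N_{T^2_\Gamma(\zb;\wb)}(u)$. Since $\skalp{p^*,k-e}=0$ ($K$ is a cone), $\skalp{A^Tp^*,u}=-\skalp{p^*,h}$, i.e.\ $\hat\sigma_{T^2_\Gamma(\zb;\wb)}(A^Tp^*)\le-\skalp{p^*,h}<\infty$. This is an exact hit; no approximation is needed.

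For (ii), replace $K_i$ by $K_i+E$. These still cover $K$ by (b), and every piece $\{u\mid Au+h\in K_i+E\}$ is nonempty by (a). Also $(K_i+E)^\circ=K_i^\circ\cap E^\perp$ with $E^\perp=(N_D(y;v))^+$. Your uniqueness argument now applies and yields a common $p^*\in\bigcap_i(K_i+E)^\circ=K^\circ=\widehat N_{T_D(y)}(v)$.

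The same computation gives $(A^{-1}K)^\circ=A^TK^\circ$. Under directional subregularity, $T_\Gamma(\zb)$ agrees with $L_\Gamma(\zb)$, hence with $\wb+A^{-1}K$, only near $\wb$, not globally as you wrote. That local agreement is exactly what yields $\widehat N^p_\Gamma(\zb;\wb)=A^T\widehat N_{T_D(y)}(v)$.
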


\subsection{Second-order optimality conditions for GPs}
In this section, we review second-order optimality conditions for the constrained problem GP.
Recall the following definition introduced by Penot \cite{Penot}.
\begin{definition}[Essential local minimizer of second order]
 A point $\bar{x}$ is said to be an essential local minimizer
of second order for problem
\begin{flalign*}
\begin{split}
\mbox{(GP)} \hspace{49mm} \min & \ \  f(x)\ \ \ {\rm s.t. } \ \  \varphi(x)\in \Omega,
\end{split}&
\end{flalign*}
 if there exist $\beta>0$ and $\delta>0$ such that
 \[
 \max\{f(x)-f(\bar{x}),\ {\rm dist}(\varphi(x),\Omega) \}\geq \beta \|x-\bar{x}\|^2, \ \ \ \forall x\in \mathbb{B}(\bar{x},\delta).
 \]
 \end{definition}

 For $\alpha\geq 0$, define the generalized Lagrangian function for GP as
 \[
 L^\alpha(x,\lambda):=\alpha f(x)+\langle \lambda, \varphi(x) \rangle.
 \]
 For simplicity, we omit $\alpha$  when $\alpha=1$.
 At a feasible point $\bar x$ of GP, the critical cone is defined as
 \[
 C(\bar x):=\{d\in \R^n\,|\, \nabla \varphi(\bar x)d\in T_\Omega(\varphi(\bar x)),\ \ \nabla f(\bar x)d\leq 0\}.
 \]
The following second-order optimality conditions for the general constrained optimization problem come
from \cite[Theorem 2, Corollary 5]{GfrYeZh19} and \cite[Theorem 3.3]{BeGfrYeZhangZhou}.

 \begin{proposition}[Second-order optimality conditions for GP]\label{Prop2.17}
Let $\bar{x}$ be a feasible point of problem GP. The following statements hold.
\begin{itemize}
\item[{\rm (i)}] Let $\bar{x}$ be a local optimal solution of GP. Suppose that  the set-valued map $x\tto \varphi(x)-\Omega$ is metrically subregular at $(\bar{x},0)$ in direction $d\in C(\bar x)$ with $T_\Omega^2(\varphi(\bar x); \nabla \varphi(\bar x)d)\neq \emptyset$. Then there exists a multiplier $\lambda\in \Lambda (\bar x;d) := \{\lambda \, | \, \nabla_x L(\bar x,\lambda)=0,
      \ \lambda\in {N}_{\Omega}(\varphi(\bar x);\nabla \varphi (\bar x)d)\}$ such that
\begin{equation}\label{add-second-thereom-1}
  \nabla^2_{xx}L(\bar{x},\lambda)(d,d)-\hat\sigma_{T^2_{\Omega}(\varphi(\bar x);\nabla \varphi(\bar x)d )}(\lambda)\geq 0.
\end{equation}
Moreover, if the following directional nondegeneracy condition
 \[
  \nabla \varphi(\bar{x})^T\lambda=0, \ \ \lambda\in \big(N_{\Omega}(\varphi(\bar{x});\nabla \varphi(\bar x)d)\big)^+ \Longrightarrow \lambda=0,
  \]
  holds, then $\Lambda(\bar x;d)=\{\lambda_0\}$ and
  \begin{equation*}\label{com-2-4}
  \nabla^2_{xx}L(\bar{x},{\lambda_0})(d,d)-\sigma_{T^2_{\Omega}(\varphi(\bar{x});\nabla \varphi(\bar{x})d)}({\lambda_0})
  \geq 0.
  \end{equation*}
\item[{\rm (iii)}] Suppose that for every  $d \in C(\bar x ) \backslash \{0\}$  there is some $\alpha\geq 0$ and $\lambda \in \widehat{N}_\Omega^p(\varphi(\bar x); \nabla \varphi(\bar x)d) $ not all equal to zero,
  satisfying
 \[
 \nabla_x L^\alpha (\bar x,\lambda)=0\]
  and
 \begin{equation}\label{second-general}
   \nabla^2_{xx}L^\alpha (\bar{x},\lambda)(d,d)+d^2\delta_{\Omega}(\varphi(\bar{x});\lambda)(\nabla \varphi(\bar{x})d)>0.
  \end{equation}
  Then $\bar{x}$ is an essential local minimizer of second order for GP.
\end{itemize}
 \end{proposition}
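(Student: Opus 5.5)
Since Proposition~\ref{Prop2.17} is a collection of known results, the plan is to reduce each part to \cite[Theorem 2, Corollary 5]{GfrYeZh19} and \cite[Theorem 3.3]{BeGfrYeZhangZhou}, and to check that the hypotheses used there are exactly the ones stated here. I also indicate the underlying argument, so that the roles of the three curvature objects $\hat\sigma_{T^2_\Omega}$, $\sigma_{T^2_\Omega}$ and $d^2\delta_\Omega$ are clear. The main work is in part (i), in passing from a ``primal'' second-order condition to a multiplier.

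For (i), fix $d\in C(\bar x)$ with $T^2_\Omega(\varphi(\bar x);\nabla\varphi(\bar x)d)\neq\emptyset$.

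1. Directional metric subregularity gives a directional exact penalty. Along the directional neighborhood $\bar x+V_{\delta,\rho}(d)$, the point $\bar x$ locally minimizes $f(x)+\kappa L\,\dist{\varphi(x),\Omega}$.
2. Take any $w\in T^2_\Omega(\varphi(\bar x);\nabla\varphi(\bar x)d)$ and test the penalized problem along curves $x=\bar x+td+\tfrac12 t^2 v$.
3. Expanding to second order yields the following. The optimal value of
\[
\min_v\ \nabla f(\bar x)v+\nabla^2 f(\bar x)(d,d)\quad\mbox{s.t.}\quad \nabla\varphi(\bar x)v+\nabla^2\varphi(\bar x)(d,d)\in T^2_\Omega(\varphi(\bar x);\nabla\varphi(\bar x)d)
\]
is nonnegative, at least in the penalized sense, with $\nabla f(\bar x)d=0$ forced by criticality and subregularity.
4. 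This problem has a nonconvex constraint set, so I will apply limiting-normal (Mordukhovich) multiplier rules to it, again using subregularity to obtain a normal (rather than abnormal) multiplier $\lambda$.
5. This $\lambda$ satisfies $\nabla_xL(\bar x,\lambda)=0$ and $\lambda\in N_{T^2_\Omega}(u)$ at some point $u$. The step I expect to be the main obstacle is showing simultaneously that $\lambda\in N_\Omega(\varphi(\bar x);\nabla\varphi(\bar x)d)$, and that $\lambda^Tu$, in the liminf sense, dominates $\hat\sigma_{T^2_\Omega}(\lambda)$. Here the definition of $\hat\sigma$ via $\liminf_{\tilde\lambda\to\lambda}\inf\{\tilde\lambda^Tu\mid\tilde\lambda\in\widehat N_C(u)\}$ is tailored precisely to absorb this limiting process. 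Rearranging then gives \eqref{add-second-thereom-1}.
6. Under the directional nondegeneracy condition, the multiplier is unique, $\Lambda(\bar x;d)=\{\lambda_0\}$. The same $\lambda_0$ therefore works for every $w\in T^2_\Omega$, so taking the supremum over $w$ upgrades $\hat\sigma$ to $\sigma$.

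For (iii), I argue by contradiction.

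1. Suppose $\bar x$ is not an essential local minimizer of second order. Then there are $x_k\to\bar x$ with $\max\{f(x_k)-f(\bar x),\dist{\varphi(x_k),\Omega}\}<\tfrac1k\|x_k-\bar x\|^2$.
2. Set $t_k:=\|x_k-\bar x\|$ and $d_k:=(x_k-\bar x)/t_k$, and pass to a subsequence with $d_k\to d$, $\|d\|=1$. The first-order expansion shows $d\in C(\bar x)\setminus\{0\}$.
3. Choose $\omega_k\in\Omega$ with $\|\varphi(x_k)-\omega_k\|=o(t_k^2)$. Then $u_k:=(\omega_k-\varphi(\bar x))/t_k\to\nabla\varphi(\bar x)d$, so $\omega_k$ eventually lies in the directional neighborhood $\varphi(\bar x)+V_{\delta,\rho}(\nabla\varphi(\bar x)d)$.
4. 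Take $(\alpha,\lambda)$ from the assumption for this $d$ and expand
\[
\alpha(f(x_k)-f(\bar x))+\skalp{\lambda,\varphi(x_k)-\varphi(\bar x)}=\tfrac12 t_k^2\nabla^2_{xx}L^\alpha(\bar x,\lambda)(d_k,d_k)+o(t_k^2),
\]
where the first-order term vanishes because $\nabla_xL^\alpha(\bar x,\lambda)=0$.
5. The left-hand side is at most $\skalp{\lambda,\omega_k-\varphi(\bar x)}+o(t_k^2)=t_k\skalp{\lambda,u_k}+o(t_k^2)$. Comparing this with the representation \eqref{indicatorf}, the liminf of $-2\skalp{\lambda,u_k}/t_k$ is bounded below by $d^2\delta_\Omega(\varphi(\bar x);\lambda)(\nabla\varphi(\bar x)d)$. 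This yields $\nabla^2_{xx}L^\alpha(\bar x,\lambda)(d,d)+d^2\delta_\Omega(\varphi(\bar x);\lambda)(\nabla\varphi(\bar x)d)\le 0$, contradicting \eqref{second-general}.

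Membership $\lambda\in\widehat N^p_\Omega$ is what ensures this second-order term is not $-\infty$ and that $\skalp{\lambda,\nabla\varphi(\bar x)d}=0$. In the case $\alpha=0$ the argument is the same, since only the inequality $\dist{\varphi(x_k),\Omega}=o(t_k^2)$ is used there.
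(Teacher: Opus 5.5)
The paper gives no argument of its own here. It simply cites \cite[Theorem 2, Corollary 5]{GfrYeZh19} and \cite[Theorem 3.3]{BeGfrYeZhangZhou}, so your reduction to those results is the paper's route. Your direct proof of (iii) is correct. It does not even need $\lambda\in\widehat N^p_\Omega(\varphi(\bar x);\nabla\varphi(\bar x)d)$, since \eqref{second-general}, $\nabla_xL^\alpha(\bar x,\lambda)=0$ and $\nabla f(\bar x)d\le 0$ already force $\skalp{\lambda,\nabla\varphi(\bar x)d}=0$ and finiteness of the curvature term.

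Your sketch of the ``moreover'' part of (i), however, has a genuine gap in step 6. Write $y:=\varphi(\bar x)$ and $h:=\nabla\varphi(\bar x)d$. Steps 1--5 never produce, for a prescribed $w\in T^2_\Omega(y;h)$, a multiplier with $\skalp{\lambda,w}\le\nabla^2_{xx}L(\bar x,\lambda)(d,d)$. They give this only at (limits of) points $u$ where $\lambda$ is normal to $T^2_\Omega(y;h)$, which is exactly the information $\hat\sigma$ encodes. So there is no $w$-indexed family of inequalities to which uniqueness of $\Lambda(\bar x;d)$ could be applied, and ``taking the supremum over $w$'' has nothing to act on. This matters. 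For $d=0$ and polyhedral $\Omega$ one has $T^2_\Omega(y;0)=T_\Omega(y)$, and the $\hat\sigma$-inequality holds trivially ($0\ge 0$) for every element of $\Lambda(\bar x;0)$. The $\sigma$-inequality, by contrast, says $\lambda_0\in T_\Omega(y)^\circ=\widehat N_\Omega(y)$, i.e.\ it upgrades M- to S-stationarity. Note also that you use only uniqueness, whereas the hypothesis is phrased with the span $\big(N_\Omega(y;h)\big)^+$.

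One way to close the gap uses the hypothesis in exactly that form. Fix $w\in T^2_\Omega(y;h)$. Nondegeneracy is equivalent to $\range\nabla\varphi(\bar x)+\big((N_\Omega(y;h))^+\big)^\perp$ being the whole space. Hence there are $v$ and $s\perp(N_\Omega(y;h))^+$ with $\nabla\varphi(\bar x)v+\nabla^2\varphi(\bar x)(d,d)=w+s$. Since $s\in N_\Omega(y;h)^\circ$, a Clarke-type distance-function argument localized in direction $h$ gives $w+s\in T^2_\Omega(y;h)$. Concretely, bound the Dini derivative of $\tau\mapsto{\rm dist}(y+t_kh+\tfrac12t_k^2w_k+\tau s,\Omega)$ on $[0,\tfrac12t_k^2]$ via proximal normals at nearby points, whose limits lie in $N_\Omega(y;h)$. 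Directional metric subregularity then puts $v$ in the outer second-order tangent set of the feasible set in direction $d$, so $\nabla f(\bar x)v+\nabla^2 f(\bar x)(d,d)\ge 0$. Inserting $\nabla f(\bar x)=-\nabla\varphi(\bar x)^T\lambda_0$ and $\skalp{\lambda_0,s}=0$ yields $\skalp{\lambda_0,w}\le\nabla^2_{xx}L(\bar x,\lambda_0)(d,d)$ for every $w$, which is the $\sigma$-version.

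A secondary point concerns steps 4--5. The linearized problem need not attain its infimum, so you cannot apply a multiplier rule at a minimizer. You need approximate minimizers, e.g.\ Ekeland's principle applied to $v\mapsto\nabla f(\bar x)v+\kappa L\,{\rm dist}(\nabla\varphi(\bar x)v+\nabla^2\varphi(\bar x)(d,d),T^2_\Omega(y;h))$, with the perturbation controlled against $\|v_k\|$. You also need the inclusion $\widehat N_{T^2_\Omega(y;h)}(u)\subseteq N_\Omega(y;h)$ to place the limiting multiplier in the directional normal cone. That limiting process, rather than a single normal at a minimizer, is what the $\liminf$ in $\hat\sigma$ absorbs.
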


\section{Variational analysis of $\Omega$}

In this core section, we analyze the set $\Omega$ having the special representation
motivated by the common structure of optimization problems from the introduction, namely
\[\Omega:=\{B(z)\, | \, z\in\Gamma\}\quad\mbox{ with }\quad \Gamma:=\{z\, | \, G(z)\in D\}.\]
Here $B:\R^{\hat{n}}\to \R^{\hat{m}}$ and $G:\R^{\hat{n}}\to \R^{\hat{l}}$ are twice continuously differentiable maps and $D$ is a polyhedral set.
The ultimate goal is to bridge the variational geometry of $\Omega$ and that of $\Gamma$
with particular focus on second-order constructions.
Such a connection between the sets $\Gamma$ and $D$ was provided in \cite[Sections 4 and 5]{BeGfrYeZhangZhou} and reviewed
in the previous section.

One might wonder why not to proceed with an arbitrary set $\Gamma$ here.
This is of course possible, but it will soon become clear how we actually
utilize the pre-image structure of $\Gamma$ to articulate our key assumption
in a tractable and more explicit form.
Note, however, that the results are valid without any constraint qualification for the system $G(z)\in D$.

In the first part, we establish a technical foundation,
essentially securing that the mapping
$\omega \tto B^{-1}(\omega) \cap \Gamma$
locally behaves like a single-valued Lipschitzian function.
The second part contains the formulas for the tangent and normal cones,
and the third part provides those for the curvature terms associated with $\Omega$.

\subsection{The main assumption and technical foundation}

For $z\in \Gamma$, we denote by $L_z$ the linear subspace defined by
\[L_z:=\{w\,|\, \nabla G(z)w\in\Spanb{T_D(G(z))}\}.\]
Note that $T_\Gamma(z)\subseteq \{w\,|\, \nabla G(z)w\in T_D(G(z))\}$ and consequently
\begin{equation}\label{EqSpanTanCones}
  \Spanb{T_\Gamma(z)}\subseteq \Spanb{\{w\,|\, \nabla G(z)w\in T_D(G(z))\}}\subseteq \{w \,|\, \nabla G(z)w\in\Spanb{T_D(G(z))}\}=L_z.
\end{equation}
Throughout this section we assume that the following condition is fulfilled at the reference point $\bar\omega\in\Omega$.
\begin{assumption}\label{AssA1}
Let $\bar\omega\in\Omega$.
Suppose that there exists a point $\zb\in B^{-1}(\bar\omega)$ lying in the set
  \begin{align}
    \label{EqLiminfOmega}& \Sigma_{\bar \omega}:=\liminf_{\omega\setto{\Omega}\bar\omega}\big(B^{-1}(\omega)\cap \Gamma\big)
    =\big\{ \zb\, |\, \forall \omega ^k  \overset{\Omega}{\to} \bar \omega, \exists z^k \rightarrow \zb \ \ {\rm with } \ \ z^k \in B^{-1}(\omega^k)\cap \Gamma \big\}
    \end{align}
    such that the following condition holds:
    \begin{align}
    \label{EqRegB} &\nabla B(\zb)w=0,\
    w\in L_{\zb} \ \Longrightarrow \ w=0.
  \end{align}
\end{assumption}

This assumption has several important consequences.
\begin{lemma}
Let $\bar \omega\in \Omega$ and $\zb\in \Sigma_{\bar \omega}$.  For every positive real $\gamma_B$ satisfying
    \begin{equation}\label{EqGamma_B}\min\{\norm{\nabla B(\zb)w}\mv
     w\in L_{\zb} ,\norm{w}=1\}>\gamma_B,
\end{equation}
  there exists a neighborhood $\widetilde W$ of $\zb$ such that
  \begin{align}
  \label{EqRegBnb}&\norm{\nabla B(z)w}\geq \gamma_B\norm{w}, \ \ \ \forall z\in \Gamma\cap \widetilde W,\  w\in L_z,\\
  \label{EqLipHatZ}&\norm{B(z_1)-B(z_2)}\geq \gamma_B\norm{z_1-z_2}, \ \ \ \forall z_1,z_2\in \Gamma\cap \widetilde W.
  \end{align}
\end{lemma}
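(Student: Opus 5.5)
The plan is to prove both estimates by contradiction and compactness of the unit sphere. The key structural fact is that, because $D$ is polyhedral, every relevant vector lands in the single fixed subspace
\[S:=\Spanb{T_D(G(\zb))}.\]
Let $V$ be the neighborhood of $0$ from Lemma \ref{relation-tangent} applied to $D$ at $G(\zb)$. By continuity of $G$, there is a neighborhood $W_0$ of $\zb$ with $G(z)\in G(\zb)+V$ for all $z\in W_0$. Then for every $z\in\Gamma\cap W_0$ we have $G(z)\in D\cap (G(\zb)+V)$, and two facts follow:
\begin{itemize}
\item Lemma \ref{relation-tangent} gives $\Spanb{T_D(G(z))}\subseteq S$. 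Hence $L_z\subseteq\{w\mv \nabla G(z)w\in S\}$.
\item The identity (\ref{Eqtangent}) gives $G(z)-G(\zb)\in T_D(G(\zb))\subseteq S$. Hence for $z_1,z_2\in\Gamma\cap W_0$ we get $G(z_1)-G(z_2)\in S$.
\end{itemize}
Since $S$ is a linear subspace, it is closed under differences and under limits, which is what the two contradiction arguments need.

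\textbf{Proof of (\ref{EqRegBnb}).} Suppose no neighborhood works. Then there are $z_k\in\Gamma$ with $z_k\to\zb$ and $w_k\in L_{z_k}$ with $\norm{w_k}=1$ and $\norm{\nabla B(z_k)w_k}<\gamma_B$; by homogeneity, unit vectors suffice. Passing to a subsequence, $w_k\to w$ with $\norm{w}=1$. By the first fact, $\nabla G(z_k)w_k\in S$. Continuity of $\nabla G$ and closedness of $S$ then give $\nabla G(\zb)w\in S$, i.e.\ $w\in L_{\zb}$. Continuity of $\nabla B$ gives $\norm{\nabla B(\zb)w}\le\gamma_B$, which contradicts the strict inequality (\ref{EqGamma_B}).

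\textbf{Proof of (\ref{EqLipHatZ}).} Suppose no neighborhood works. Then there are sequences $z_1^k\ne z_2^k$ in $\Gamma$, both converging to $\zb$, with $\norm{B(z_1^k)-B(z_2^k)}<\gamma_B\norm{z_1^k-z_2^k}$. Set
\[w_k:=\frac{z_1^k-z_2^k}{\norm{z_1^k-z_2^k}},\]
and pass to a subsequence with $w_k\to w$, $\norm{w}=1$. Since $G$ and $B$ are $C^1$, the mean value theorem in integral form yields
\[\frac{G(z_1^k)-G(z_2^k)}{\norm{z_1^k-z_2^k}}=\int_0^1\nabla G\big(z_2^k+s(z_1^k-z_2^k)\big)\,ds\; w_k\;\to\;\nabla G(\zb)w,\]
and similarly the difference quotients of $B$ converge to $\nabla B(\zb)w$. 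By the second fact, the $G$-quotients lie in $S$, so $\nabla G(\zb)w\in S$, i.e.\ $w\in L_{\zb}$. Moreover $\norm{\nabla B(\zb)w}\le\gamma_B$, again contradicting (\ref{EqGamma_B}).

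Finally, take $\widetilde W$ to be the intersection of $W_0$ with the two neighborhoods obtained above. The membership $\zb\in\Sigma_{\bar\omega}$ plays no role in this argument; it is only used later. The main subtlety is the second estimate: $z_1-z_2$ need not lie in any $L_z$, so one cannot simply integrate (\ref{EqRegBnb}) along the segment from $z_2$ to $z_1$. This is exactly why I plan to use the polyhedral reduction (\ref{Eqtangent}) to place $G(z_1)-G(z_2)$ in the fixed subspace $S$ and pass to the limit, rather than arguing pointwise along the segment.
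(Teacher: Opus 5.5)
Your proposal is correct and follows essentially the same route as the paper. Both estimates are proved by contradiction with normalized sequences: for the first, Lemma \ref{relation-tangent} places $\nabla G(z_k)w_k$ in the fixed subspace $\Spanb{T_D(G(\zb))}$; for the second, the reduction (\ref{Eqtangent}) places $G(z_1^k)-G(z_2^k)$ in that same subspace, and your integral mean-value argument for the difference quotients simply makes explicit a limit step the paper leaves implicit.
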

\begin{proof}
To the contrary, assume that  \eqref{EqRegBnb} does not hold for any neighborhood $\widetilde W$ of $\zb$. Then we can find sequences $z_k\longsetto{\Gamma}\zb$ and $w_k\in L_{z_k}$  such that $\norm{\nabla B(z_k)w_k}<\gamma_B\norm{w_k}$. Hence  $w_k\not=0$ and we can assume without loss of generality that $\norm{w_k}=1$. By possibly passing to a subsequence we can assume that $w_k$ converges to some $w$ with $\norm{w}=1$. Since $w_k\in L_{z_k}$, for $k$ sufficiently large we have
\begin{equation*}\label{wk}
\nabla G(z_k)w_k\in (T_D(G(z_k)))^+\subseteq (T_D(G(\bar z)))^+,
\end{equation*}
 where the second step follows from Lemma \ref{relation-tangent}.
 Passing to the limit and taking into account that $\norm{\nabla B(z_k)w_k}<\gamma_B\norm{w_k}$ yields $\nabla G(\zb)w\in \Spanb{T_D(G(\zb))}$, i.e., $w\in L_{\bar z}$, but $\norm{\nabla B(\zb)w}\leq\gamma_B$. This contradicts (\ref{EqGamma_B}), showing \eqref{EqRegBnb}.

  Assume now that \eqref{EqLipHatZ} does not hold. Then we can find sequences $z_1^k\longsetto{\Gamma}\zb$ and $z_2^k\longsetto{\Gamma}\zb$ such that \begin{equation}\label{B1}
  \norm{B(z_1^k)-B(z_2^k)}< \gamma_B\norm{z_1^k-z_2^k}, \ \ \forall \, k.
   \end{equation}
   Particularly, $z_1^k\not=z_2^k$. By passing to a subsequence we may assume that the sequence $(z_1^k-z_2^k)/\norm{z_1^k-z_2^k}$ converges to some $w$ with $\norm{w}=1$. Using (\ref{Eqtangent}), for all $k$ sufficiently large we have $G(z_i^k)-G(\zb)\in T_D(G(\zb))$ for $i=1,2$, 
implying $G(z_1^k)-G(z_2^k)\in \Spanb{T_D(G(\zb))}$. Since $\Spanb{T_D(G(\zb))}$ is a subspace, we obtain
  \[\nabla G(\zb)w=\lim_{k\to \infty}\frac{G(z_1^k)-G(z_2^k)}{\norm{z_1^k-z_2^k}}\in \Spanb{T_D(G(\zb))}.\]
  Hence $w\in L_{\bar z}$ and $\|w\|=1$. Further, it follows from \eqref{B1} that
\[
\norm{\nabla B(\zb)w}=\lim_{k\to\infty}\frac{\norm{B(z_1^k)-B(z_2^k)}}{\norm{z_1^k-z_2^k}}\leq \gamma_B,
\]
  contradicting \eqref{EqGamma_B}. This proves \eqref{EqLipHatZ}.
\end{proof}
\begin{proposition}\label{PropHomeo}
Let $\bar \omega\in \Omega$ and $\zb\in \Sigma_{\bar \omega}$. For every positive real $\gamma_B$ satisfying \eqref{EqGamma_B} and every open neighborhood $\widetilde W$ of $\zb$ such that \eqref{EqLipHatZ} holds, there are open neighborhoods $\O$ of $\bar\omega$ and $W$ of $\zb$ satisfying $W \subseteq\widetilde W$ and a bijective mapping $\hat z:\Omega\cap\O\to \Gamma\cap W$, such that $\hat z$ is Lipschitzian on $\Omega\cap\O$ with constant $1/\gamma_B$ and
 \[
\big(B^{-1}(\omega)\cap \Gamma \big) \cap W=\{\hat z(\omega)\},\ \ \ \forall\, \omega\in \Omega\cap\O.
\]
\end{proposition}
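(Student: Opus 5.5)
The plan is to obtain $\hat z$ from two ingredients: injectivity of $B$ on $\Gamma\cap\widetilde W$, which is \eqref{EqLipHatZ}, and the liminf property $\zb\in\Sigma_{\bar\omega}$, which gives existence of preimages near $\zb$. First I fix the neighborhood $W$. Choose $r>0$ with $\zb+2r\mathbb{B}\subseteq\widetilde W$ and set $W:=\zb+r\mathbb{B}$. Since $W\subseteq\widetilde W$, \eqref{EqLipHatZ} holds on $\Gamma\cap W$. Hence $B$ restricted to $\Gamma\cap W$ is injective, and its inverse is Lipschitz with constant $1/\gamma_B$ on the image $B(\Gamma\cap W)$.

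Next I find $\O$ such that every $\omega\in\Omega\cap\O$ has a preimage in $\Gamma\cap W$. I argue by contradiction. Suppose no neighborhood $\O$ works. Then there are $\omega^k\overset{\Omega}{\to}\bar\omega$ with $B^{-1}(\omega^k)\cap\Gamma\cap W=\emptyset$. By the definition \eqref{EqLiminfOmega} of $\Sigma_{\bar\omega}$, there exist $z^k\in B^{-1}(\omega^k)\cap\Gamma$ with $z^k\to\zb$, so $z^k\in W$ for large $k$, which is a contradiction. Hence there is an open neighborhood $\O$ of $\bar\omega$ such that $B^{-1}(\omega)\cap\Gamma\cap W\neq\emptyset$ for every $\omega\in\Omega\cap\O$. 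By injectivity this set is a singleton, and I call its element $\hat z(\omega)$. Then $\hat z$ is Lipschitz with constant $1/\gamma_B$ on $\Omega\cap\O$: for $\omega_1,\omega_2\in\Omega\cap\O$, apply \eqref{EqLipHatZ} to $z_i=\hat z(\omega_i)$, which gives $\norm{\hat z(\omega_1)-\hat z(\omega_2)}\le\norm{\omega_1-\omega_2}/\gamma_B$.

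The step I expect to be the main obstacle is bijectivity onto $\Gamma\cap W$. Surjectivity requires that every $z\in\Gamma\cap W$ satisfy $B(z)\in\O$, and this fails for the $W$ chosen above. The remedy is to shrink $W$ after fixing $\O$. Put $W':=W\cap B^{-1}(\O)$, which is open and contains $\zb$ by continuity of $B$ and $B(\zb)=\bar\omega$, and keep $\O$ unchanged.

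It remains to check that everything survives with $W'$ in place of $W$. Uniqueness persists because $W'\subseteq W$. For existence, take $\omega\in\Omega\cap\O$ and let $z=\hat z(\omega)\in\Gamma\cap W$. Then $B(z)=\omega\in\O$, so $z\in W'$, and the identity $(B^{-1}(\omega)\cap\Gamma)\cap W'=\{\hat z(\omega)\}$ holds. For surjectivity, any $z\in\Gamma\cap W'$ gives $\omega:=B(z)\in\Omega\cap\O$, and uniqueness forces $\hat z(\omega)=z$. Injectivity of $\hat z$ is immediate from $B(\hat z(\omega))=\omega$. This yields the bijection with the stated Lipschitz constant.
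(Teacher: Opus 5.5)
Your proof is correct and follows essentially the same route as the paper: existence of preimages near $\zb$ comes from $\zb\in\Sigma_{\bar\omega}$, uniqueness and the $1/\gamma_B$ Lipschitz bound come from \eqref{EqLipHatZ}, and the neighborhood of $\zb$ is then shrunk to obtain surjectivity. Your explicit choice $W':=W\cap B^{-1}(\O)$ is exactly what the paper's argument amounts to: there, $\hat z(\Omega\cap\O)$ is relatively open in $\Gamma$ because $\hat z^{-1}=B$, so you have simply made that last step concrete.
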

 \begin{proof}
   By \eqref{EqLiminfOmega} there is an open neighborhood $\O$ of $\bar\omega$ such that $Z(\omega):=B^{-1}(\omega)\cap\Gamma\cap\widetilde W\not=\emptyset$ for all $\omega\in \Omega\cap \O$. Further, for all $\omega\in \Omega\cap \O$ the set $Z(\omega)$ consists of exactly one element because for any $z_1,z_2\in Z(\omega)$ we have $\norm{z_1-z_2}\leq 1/\gamma_B\norm{B(z_1)-B(z_2)}=1/\gamma_B\norm{\omega-\omega}=0$ by \eqref{EqLipHatZ}. We denote this element by $\hat z(\omega)$. Then we have shown that $\hat z(\omega)$ is an injective mapping on $ \Omega\cap \O$.

   For every $\omega_1,\omega_2\in \Omega\cap \O$ we have $\hat z(\omega_1),\hat z(\omega_2)\in\Gamma\cap\widetilde W$ and therefore
   \[\norm{\hat z(\omega_1)-\hat z(\omega_2)}\leq \frac 1{\gamma_B}\norm{B(\hat z(\omega_1))-B(\hat z(\omega_2))}=\frac 1{\gamma_B}\norm{\omega_1-\omega_2}\]
   by \eqref{EqLipHatZ}. Thus we conclude that $\hat z$ is Lipschitzian on $\Omega\cap \O$ with constant $1/\gamma_B$. Now let $\widehat\Gamma:=\hat z(\Omega\cap \O)$. Then $\hat z:\Omega\cap \O\to\widehat\Gamma$ is bijective and, since $\hat z^{-1}=B$, $\widehat \Gamma$ is relatively open. By Assumption \ref{AssA1}, $\zb \in B^{-1}(\bar{\omega}) \cap \Gamma$ and hence $\hat z(\bar \omega)=\zb$. Together with $\zb\in \widehat\Gamma\subseteq \Gamma\cap \widetilde W$ we can find an open neighborhood $W$ of $\zb$  satisfying $W \subseteq\widetilde W$ such that {$\widehat \Gamma=\Gamma\cap W$}. This completes the proof.
 \end{proof}

\begin{remark}
Assumption \ref{AssA1} and the statement of Proposition \ref{PropHomeo} resemble the strict derivative criterion for strong metric regularity given in \cite[Theorem 4D.1]{DoRo14},
a generalization of the Kummer inverse function theorem \cite[Theorem 4D.6]{DoRo14}.
Without going into details, the local single-valuedness and Lipschitz continuity of $\omega \tto B^{-1}(\omega)\cap \Gamma$
can be derived from \cite[Theorem 4D.1]{DoRo14} under Assumption \ref{AssA1}
with \eqref{EqRegB} replaced by
\[
	\nabla B(\zb)w=0,\ w \in T^P_{\Gamma}(\bar z) \ \Longrightarrow \ w=0
\]
or by more restrictive, but explicit, condition
\[
	\nabla B(\zb)w=0,\ \nabla G(\zb)w \in T^P_D(G(\zb)) \ \Longrightarrow \ w=0.
\]
Here, $T^P$ denotes the paratingent cone, see \cite{AubFra90},
which is much larger than the tangent cone and often hard to compute.
The first condition can be applied to an arbitrary set $\Gamma$, but is hard to verify; the latter resembles ours, but $\Spanb{T_D(G(\bar z))}$ is still easier to compute.
\end{remark}

\if{
\textcolor{green}{
The following example shows that replacing $L_z$ by $\Spanb{T_\Gamma(\bar z)}$ in Assumption \ref{AssA1}
isn`t strong enough to enforce the desired behavior.
It also show that the paratingent cone is not comparable with the subspace generated by the tangent cone.}

\begin{example}
\textcolor{green}{
Let $B: \R^2 \to \R$ by given by $B(z_1,z_2) = z_1$, $G: \R^2 \to \R^2$ by $G(z_1,z_2) = (z_2 - z_1^2, z_2 + z_1^2)^T$,
$D \subset \R^2$ by $D = (\{0\} \times \R^+) \cup (\R^- \times \{0\})$ and set
$\Gamma := G^{-1} (D)$ (union of two parabolas) and $\Omega = B(\Gamma) = \R$.}

\textcolor{green}{
Clearly $B^{-1}(\omega)\cap \Gamma = \{(\omega, \omega^2), (\omega, - \omega^2)\}$ for every $\omega \in \R$.
Thus, the local single-valuedness from Proposition \ref{PropHomeo} must be violated for $\bar\omega := 0$,
for which we have $ \Sigma_{\bar \omega} = \{0\}$; we thus set $\bar z = 0$.}

\textcolor{green}{
We have $\nabla B(z)w = w_1$ for $w = (w_1,w_2) \in \R^2$ and any $z \in \R^2$.
Regarding variational geometry of $\Gamma$, we have
\[
	 \R \times \{0\} = T_\Gamma(\bar z) = \Spanb{T_\Gamma(\bar z)}
	\subset
	T^P_{\Gamma}(\bar z) = \R^2 .
\]
Particularly, $\nabla B(\bar z)w = w_1 = 0$ combined with $w \in \Spanb{T_\Gamma(\bar z)} = \R \times \{0\}$
does actually yield $w = 0$.
As expected, $\nabla B(z)w = w_1 = 0$ combined with $T^P_{\Omega}(\bar\omega) = \R^2$ allows nonzero $w$.}

\textcolor{green}{
When it comes to set $D$, we get $T_D(G(\bar z)) = D$ and
\[
	 \R^2 = \Spanb{T_D(G(\bar z))}
	\supset
	T^P_D(G(\bar z)) =   (\R \times \R^+) \cup (\R^- \times \R).
\]
Nevertheless, since $\nabla G(\zb)w = (w_2, w_2)$,
having $\nabla G(\zb)w$ in any of the two set above is the same and it imposes
no restriction on $w$, meaning that, again as expected,
Assumption \ref{AssA1} as well its modification with $T^P_D(G(\zb))$ from the remark above
are both violated.}
\end{example}
}\fi

Even more can be derived from Assumption \ref{AssA1}.
For every $z\in \Gamma$ we define the set-valued mappings $\psi_z:\R^{\hat n}\tto\R^{\hat m}\times\R^{\hat l}$ and $\theta_z:\R^{\hat m}\tto\R^{\hat n}$ by
\begin{align*}
\psi_z(w)&:=\myvec{\nabla B(z)w\\ \nabla G(z)w}-
\myvec{0_{\R^{\hat m}} \\ \Spanb{T_D(G(z))}}, \\
\theta_z(\omega^*)&:=\nabla B(z)^T\omega^*-L_z^\perp.
\end{align*}
Clearly, $\range \psi_z$ and $\range \theta_z$ are linear subspaces. Let $(\eta,q)\in \range \psi_z$.  We have
\begin{equation}\label{Eq-inverse}
 \psi_z^{-1}(\eta,q):=\left \{ w \, |\, \myvec{\nabla B(z)w\\ \nabla G(z)w} \in \myvec{\eta\\q}+\myvec{0_{\R^{\hat m}} \\ \Spanb{T_D(G(z))}}\right\}.
 \end{equation}

\begin{lemma}\label{LemPsiTheta}
Let $\bar \omega\in \Omega$ and $\zb\in \Sigma_{\bar \omega}$.  Consider $\gamma_B>0$ and a neighborhood $\widetilde W$ of $\zb$ such that \eqref{EqGamma_B}, \eqref{EqRegBnb}
and \eqref{EqLipHatZ} hold. Then for every $z\in\Gamma\cap\widetilde W$ there is a linear mapping $\hat w_z:\range \psi_z\to \R^{\hat n}$ such that $\psi_z^{-1}(\eta,q)=\{\hat w_z(\eta,q)\}$ for all $(\eta,q)\in \range \psi_z$. Moreover, $\norm{\hat w_z}$ is bounded on compact subsets of $\Gamma\cap\widetilde W$.
Further, for every $z\in\Gamma\cap\widetilde W$  the mapping $\theta_z$ satisfies
\begin{equation}\label{EqUniformMetrReg}
  \dist{\omega^*,\theta_z^{-1}(z^*)}\leq \frac 1{\gamma_B}\dist{z^*,\theta_z(\omega^*)}, \  \  \  \forall\, (\omega^*,z^*)\in\R^{\hat m}\times\R^{\hat n}.
\end{equation}
\end{lemma}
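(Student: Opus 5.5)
The plan is to reduce everything to the uniform injectivity bound \eqref{EqRegBnb} on $L_z$, together with elementary linear algebra. Fix $z\in\Gamma\cap\widetilde W$.

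\emph{Single-valuedness and linearity of $\psi_z^{-1}$.} By \eqref{Eq-inverse}, if $w_1,w_2\in\psi_z^{-1}(\eta,q)$, then $\nabla B(z)(w_1-w_2)=0$ and $\nabla G(z)(w_1-w_2)\in\Spanb{T_D(G(z))}$, i.e.\ $w_1-w_2\in L_z$. By \eqref{EqRegBnb}, $\gamma_B\norm{w_1-w_2}\le\norm{\nabla B(z)(w_1-w_2)}=0$. So $\psi_z^{-1}(\eta,q)$ is a singleton $\{\hat w_z(\eta,q)\}$ for $(\eta,q)\in\range\psi_z$. Since $\gph\psi_z$ is a linear subspace (the image of a subspace under a linear map, plus a subspace), $\gph\psi_z^{-1}$ is a linear subspace as well. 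A single-valued map with linear graph on the subspace $\range\psi_z$ is linear.

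\emph{Uniform boundedness on compacts.} For $w=\hat w_z(\eta,q)$, decompose $w$ via $M_z:=\{w\mid\nabla G(z)w\in q+\Spanb{T_D(G(z))}\}$. This is an affine space parallel to $L_z$, so $w=w_0+\ell$ with $w_0$ the minimal-norm element of $M_z$ and $\ell\in L_z$. Then \eqref{EqRegBnb} gives $\gamma_B\norm{\ell}\le\norm{\eta-\nabla B(z)w_0}$, hence $\norm{w}\le\norm{w_0}(1+\norm{\nabla B(z)}/\gamma_B)+\norm{\eta}/\gamma_B$.

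It remains to bound $\norm{w_0}$ by $c\norm{q}$ locally uniformly. Note that $w_0$ is the minimal-norm solution of $\Pi_z\nabla G(z)w=\Pi_z q$, where $\Pi_z$ is the orthogonal projection onto $\Spanb{T_D(G(z))}^\perp$. This is the step I expect to be the main obstacle, since the subspace $\Spanb{T_D(G(z))}$ and the rank of $\Pi_z\nabla G(z)$ may jump as $z$ varies. Two facts control this:
\begin{itemize}
\item Because $D$ is polyhedral, $\Spanb{T_D(G(z))}$ takes only finitely many values near any point. This follows by applying Lemma~\ref{relation-tangent} with a finite covering argument, using that $T_D(G(z))$ is a union of tangent cones to finitely many convex polyhedra, each of which has finitely many faces.
\item Norms of pseudo-inverses can blow up only when the rank drops along a sequence.
\end{itemize}
I would therefore argue by contradiction. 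Take $z_k\to z'$ in a compact subset of $\Gamma\cap\widetilde W$ with $\norm{\hat w_{z_k}}\to\infty$, and pass to a subsequence on which the subspace $S:=\Spanb{T_D(G(z_k))}$ is constant. Choose normalized $(\eta_k,q_k)$ with $\norm{\hat w_{z_k}(\eta_k,q_k)}\to\infty$, and rescale $w_k:=\hat w_{z_k}(\eta_k,q_k)/\norm{\hat w_{z_k}(\eta_k,q_k)}$. A limit $w$ of the $w_k$ satisfies $\nabla B(z')w=0$ and $\nabla G(z')w\in S$. By Lemma~\ref{relation-tangent} applied at $G(z')$, we have $S\subseteq\Spanb{T_D(G(z'))}$, so $w\in L_{z'}$. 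Then \eqref{EqRegBnb} at $z'$ forces $w=0$, contradicting $\norm{w}=1$. This contradiction argument handles boundedness directly, without splitting into the two terms above.

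\emph{Metric regularity estimate \eqref{EqUniformMetrReg}.} By duality, $\range\theta_z=\range\nabla B(z)^T+L_z^\perp=\bigl(\ker\nabla B(z)\cap L_z\bigr)^\perp=\R^{\hat n}$, using \eqref{EqRegBnb}. So $\theta_z$ is a surjective linear relation and Robinson--Ursescu applies; I want the explicit constant $1/\gamma_B$, however. Given $\omega^*,z^*$, let $v$ be the point of $\nabla B(z)^T\omega^*-L_z^\perp$ closest to $z^*$. Then $\dist{z^*,\theta_z(\omega^*)}=\norm{P_{L_z}(z^*-\nabla B(z)^T\omega^*)}$, where $P_{L_z}$ is the orthogonal projection onto $L_z$.

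The plan is to find $\Delta$ with $P_{L_z}\nabla B(z)^T\Delta=P_{L_z}(z^*-\nabla B(z)^T\omega^*)=:r$ and $\norm{\Delta}\le\norm{r}/\gamma_B$; then $\omega^*+\Delta\in\theta_z^{-1}(z^*)$. The operator $A:=\nabla B(z)|_{L_z}:L_z\to\R^{\hat m}$ satisfies $\norm{Aw}\ge\gamma_B\norm{w}$, so its adjoint $A^*=P_{L_z}\nabla B(z)^T$ is surjective onto $L_z$. Moreover, its minimal-norm right inverse is $A(A^*A)^{-1}$, with norm $\le1/\gamma_B$ because the smallest singular value of $A$ is at least $\gamma_B$. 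Taking $\Delta:=A(A^*A)^{-1}r$ gives the estimate.
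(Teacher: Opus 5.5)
Your proof is correct. The single-valuedness and linearity of $\psi_z^{-1}$, and the boundedness of $\norm{\hat w_z}$ on compact sets, follow the paper's argument: normalize, pass to a limit, land in $L_{z'}$, and contradict \eqref{EqRegBnb}. The detour through the minimal-norm decomposition is unnecessary. So is the finiteness of the possible subspaces $\Spanb{T_D(G(z))}$: Lemma~\ref{relation-tangent} already gives $\Spanb{T_D(G(z_k))}\subseteq\Spanb{T_D(G(z'))}$ for all large $k$, and that is all the limit passage needs.

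You genuinely differ from the paper on the estimate \eqref{EqUniformMetrReg}. The paper applies the metric regularity criterion \cite[Example 9.44]{RoWe98} to $\theta_z$, using $N_{L_z^\perp}(0)=L_z$ together with \eqref{EqRegBnb}. This yields the estimate only near $(0,0)$, for each $\kappa>1/\gamma_B$. The paper then globalizes it by positive homogeneity of $\theta_z$ and lets $\kappa\downarrow 1/\gamma_B$. You instead compute $\dist{z^*,\theta_z(\omega^*)}=\norm{P_{L_z}(z^*-\nabla B(z)^T\omega^*)}$ exactly. You then invert the adjoint $A^*=P_{L_z}\nabla B(z)^T$ of $A=\nabla B(z)|_{L_z}$ through the right inverse $A(A^*A)^{-1}$, whose norm is $1/\sigma_{\min}(A)\le 1/\gamma_B$.

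Your route is elementary and self-contained. It gives the sharp constant directly, produces an explicit element of $\theta_z^{-1}(z^*)$ (hence nonemptiness), and avoids both the local-to-global step and the limit in $\kappa$. The paper's route is shorter once the citation is granted and stays within the variational-analytic toolkit used elsewhere in the paper.
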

\begin{proof}
Take $z\in \Gamma\cap\widetilde W$. Then for $(\eta,q)\in \range \psi_z$ and for  $w_1,w_2\in \psi_z^{-1}(\eta,q)$, we have
\begin{align*}\myvec{\nabla B(z)(w_1-w_2)\\\nabla G(z)(w_1-w_2)}&\in \myvec{\eta\\q}+
\myvec{0_{\R^{\hat m}} \\ \Spanb{T_D(G(z))}}-\myvec{\eta\\q}-
\myvec{0_{\R^{\hat m}} \\ \Spanb{T_D(G(z))}}\\
&=\myvec{0_{\R^{\hat m}} \\ \Spanb{T_D(G(z))}}.\end{align*}
This means that $w_1-w_2\in L_{z}$ and hence $w_1=w_2$ by \eqref{EqRegBnb}.
Thus $\psi_z^{-1}(\eta,q)$ consists of a single point; let us denote it $\hat w_z(\eta,q)$. The linearity of $\hat w_z$ is an easy consequence of the linear structure of $\psi_z$. Now assume that $\norm{\hat w_z}$ is not bounded on some compact subset $M$ of $\Gamma\cap\widetilde W$. Then there are sequences $z_k\in M$ and $(\eta_k,q_k)$ with $\norm{(\eta_k,q_k)}=1$ such that $\norm{w_k}\to \infty$ with $w_k:=\hat w_{z_k}(\eta_k,q_k)$. By possibly passing to a subsequence we can assume that $z_k$ converges to some $z\in M$ and $w_k/\norm{w_k}$ converges to some $w$ with $\norm{w}=1$. By Lemma \ref{relation-tangent} we have $\Spanb{T_D(G(z_k))}\subseteq\Spanb{T_D(G(z))}$ for all $k$ sufficiently large and therefore
\[\myvec{\nabla B(z_k)\frac{w_k}{\norm{w_k}}\\\nabla G(z_k)\frac{w_k}{\norm{w_k}}}\in \myvec{\frac{\eta_k}{\norm{w_k}}\\\frac{q_k}{\norm{w_k}}}+
\myvec{0_{\R^{\hat m}} \\ \Spanb{T_D(G(z))}}.\]
Passing to the limit yields $\nabla B(z)w=0$ and $\nabla G(z)w\in \Spanb{T_D(G(z))}$, contradicting \eqref{EqRegBnb} because of $w\in L_z$ and $\norm{w}=1$. Thus, $\norm{\hat w_z}$ is bounded on $M$.

It remains to show \eqref{EqUniformMetrReg}. Since $N_{L_z^\perp}(0)=L_z$, we conclude from \eqref{EqRegBnb} that $\norm{\nabla B(z)w}\geq \gamma_B\norm{w}$ for all $w\in N_{L^\perp}(0)$. Thus, by \cite[Example 9.44]{RoWe98} the mapping $\theta_z$ is metrically regular near $(0,0)$ and hence for every $\kappa>1/\gamma_B$ there holds
\begin{equation*}
\dist{\omega^*,\theta_z^{-1}(z^*)}\leq \kappa\dist{z^*,\theta_z(\omega^*)}
\end{equation*}
for all $(\omega^*,z^*)$ sufficiently close to $(0,0)$. Due to the linear structure of $\theta_z$, we have $\theta_z^{-1}(\alpha z^*)=\alpha \theta_z^{-1}(z^*)$ and $\theta_z(\alpha\omega^*)=\alpha \theta_z(\omega^*)$
for all $\alpha>0$, yielding
\[
\alpha\dist{\omega^*,\theta_z^{-1}(z^*)}=\dist{\alpha\omega^*,\theta_z^{-1}(\alpha z^*)} \ \ {\rm and} \ \ \alpha\dist{z^*,\theta_z(\omega^*)}=\dist{\alpha z^*,\theta_z(\alpha\omega^*)}.
\]
Consequently, the above inequality is valid for all $(\omega^*,z^*)$ and since it holds for all $\kappa>1/\gamma_B$, it must hold with $\kappa=1/\gamma_B$ as well and the proof is complete.
\end{proof}

\subsection{Variational geometry of $\Omega$ via $\Gamma$}

With these preparations, the expressions of tangent cone, second-order tangent set, Fr\'echet and directional normal cone of $\Omega$ are established in Theorems \ref{ThTanConeAuxProbl} and \ref{omega-noral} below.

\begin{theorem}\label{ThTanConeAuxProbl}
Let $\bar \omega\in \Omega$ and $\zb\in \Sigma_{\bar \omega}$. Consider $\gamma_B>0$ and an open  neighborhood $\widetilde W$ of $\zb$ such that \eqref{EqGamma_B}, \eqref{EqRegBnb} and \eqref{EqLipHatZ} hold. Consider open neighborhoods $\O$ of $\bar\omega$ and $W\subseteq \widetilde W$ of $\zb$ and the bijective mapping $\hat z:\Omega\cap \O\to \Gamma\cap W$ according to Proposition \ref{PropHomeo}. Then the following statements hold true for all $\omega\in\Omega\cap\O$.
  \begin{enumerate}
  \item[\rm (i)] There holds
  \begin{equation}
    \label{EqTanAuxProbl}T_\Omega(\omega)=\{\nabla B(\hat z(\omega))w\mv w\in T_\Gamma(\hat z(\omega))\}.
    \end{equation}
 For all $\eta\in T_\Omega(\omega)$ and for all sequences $t_k\downarrow 0$ and $\eta_k\to \eta$ with $\omega+t_k\eta_k\in\Omega$,  the limit
\[ \hat z'(\omega;\eta):= \lim_{k\to\infty}\frac{\hat z(\omega+t_k\eta_k)-\hat z(\omega)}{t_k}\]
  exists and equals to $\hat w_{\hat z(\omega)}(\eta,0)\in T_\Gamma(\hat z(\omega))$, where $\hat w_z$ is the linear mapping from Lemma \ref{LemPsiTheta}.
  \item[\rm (ii)] Let $\eta\in T_\Omega(\omega)$ and $w:=\hat z'(\omega;\eta)$. Then
  \begin{equation}
    \label{EqSecOrdTanSetAuxProbl}
    T^2_\Omega(\omega;\eta)=\{\nabla B(\hat z(\omega))v+\nabla^2B(\hat z(\omega))(w,w)\mv v\in T^2_\Gamma(\hat z(\omega);w)\}.
    \end{equation}
     Further, for all $\xi\in T^2_\Omega(\omega;\eta)$ and all sequences $t_k\downarrow 0$ and $\xi_k\to\xi$ with $\omega+t_k\eta+\frac12 t_k^2\xi_k\in\Omega$, the limit
    \[\hat z''(\omega;\eta,\xi):=\lim_{k\to\infty}\frac{\hat z(\omega+t_k\eta+\frac12 t_k^2\xi_k)-\hat z(\omega)-t_k \hat z'(\omega;\eta)}{\frac 12 t_k^2}\]
    exists and equals to
  $\hat w_{\hat z(\omega)}\big(\xi-\nabla^2 B(\hat z(\omega))(w,w),-\nabla^2 G(\hat z(\omega))(w,w)\big)\in T^2_\Gamma\big(\hat z(\omega);\hat{z}'(\omega;\eta)\big).$
  \end{enumerate}
\end{theorem}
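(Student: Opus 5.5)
Fix $\omega\in\Omega\cap\O$ and write $z:=\hat z(\omega)\in\Gamma\cap W$. The whole argument rests on two facts. First, $\hat z$ is Lipschitz with constant $1/\gamma_B$ and inverts $B$ on $\Gamma\cap W$ (Proposition \ref{PropHomeo}). Second, for $z'\in\Gamma$ near $z$, formula \eqref{Eqtangent} gives $G(z')-G(z)\in T_D(G(z))\subseteq\Spanb{T_D(G(z))}$.

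\textbf{Part (i).} The inclusion ``$\supseteq$'' in \eqref{EqTanAuxProbl} is direct. If $w\in T_\Gamma(z)$, there are $t_k\downarrow0$ and $w_k\to w$ with $z+t_kw_k\in\Gamma\cap W$. Then $B(z+t_kw_k)\in\Omega\cap\O$, and a Taylor expansion gives
\[
B(z+t_kw_k)=\omega+t_k\big(\nabla B(z)w+o(1)\big),
\]
so $\nabla B(z)w\in T_\Omega(\omega)$. For ``$\subseteq$'' and the existence of the limit, take $t_k\downarrow0$ and $\eta_k\to\eta$ with $\omega+t_k\eta_k\in\Omega$, and set
\[
w_k:=\frac{\hat z(\omega+t_k\eta_k)-z}{t_k}.
\]
By the Lipschitz property, $\norm{w_k}\le\norm{\eta_k}/\gamma_B$, so $(w_k)$ is bounded. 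Take any cluster point $w$. Since $z+t_kw_k\in\Gamma$, we get $w\in T_\Gamma(z)$. Moreover $B(z+t_kw_k)=\omega+t_k\eta_k$, and differentiating yields $\nabla B(z)w=\eta$. Also $\nabla G(z)w$ is a limit of $(G(z+t_kw_k)-G(z))/t_k\in\Spanb{T_D(G(z))}$, a closed subspace, so it lies there as well. Hence $w\in\psi_z^{-1}(\eta,0)$, and by Lemma \ref{LemPsiTheta} this forces $w=\hat w_z(\eta,0)$. Since every cluster point is this same vector, the whole sequence converges, which proves both claims of (i).

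\textbf{Part (ii).} The argument is the second-order analogue. For ``$\supseteq$'', take $v\in T^2_\Gamma(z;w)$ with sequences $z+t_kw+\frac12t_k^2v_k\in\Gamma$. The second-order Taylor expansion of $B$ gives
\[
B\big(z+t_kw+\tfrac12t_k^2v_k\big)=\omega+t_k\eta+\tfrac12t_k^2\big(\nabla B(z)v+\nabla^2B(z)(w,w)+o(1)\big),
\]
using $\nabla B(z)w=\eta$ from (i). For ``$\subseteq$'', take $\omega_k:=\omega+t_k\eta+\frac12t_k^2\xi_k\in\Omega$ and set
\[
v_k:=\frac{\hat z(\omega_k)-z-t_kw}{\tfrac12t_k^2}.
\]

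\textbf{Main obstacle: boundedness of $v_k$.} The Lipschitz estimate alone only gives $\hat z(\omega_k)-z-t_kw=o(t_k)$, not $O(t_k^2)$. I would obtain the $O(t_k^2)$ bound by applying \eqref{EqRegBnb} to the increment $u_k:=\hat z(\omega_k)-z-t_kw$. This is legitimate because $u_k$ nearly lies in $L_z$, for the following reason. Both $G(\hat z(\omega_k))-G(z)$ and $\nabla G(z)w$ lie in $\Spanb{T_D(G(z))}$. Taylor expansion then gives
\[
\nabla G(z)u_k\in\Spanb{T_D(G(z))}+O(t_k^2),
\]
while the expansion of $B$ gives
\[
\nabla B(z)u_k=\tfrac12t_k^2\xi_k-\tfrac12\nabla^2B(z)\big(t_kw+u_k,\,t_kw+u_k\big)+o(t_k^2+\norm{u_k}^2)=O(t_k^2)+o(\norm{u_k}).
\]
So $u_k$ solves $\psi_z(u_k)\ni(a_k,b_k)$ with $\norm{(a_k,b_k)}=O(t_k^2)+o(\norm{u_k})$. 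By the uniform boundedness of $\hat w_z$ in Lemma \ref{LemPsiTheta} (injectivity of $\psi_z$ on the relevant subspace), this gives $\norm{u_k}\le c\,(t_k^2+o(\norm{u_k}))$, hence $u_k=O(t_k^2)$ and $(v_k)$ is bounded.

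\textbf{Conclusion of (ii).} Take a cluster point $v$ of $(v_k)$. It belongs to $T^2_\Gamma(z;w)$, and passing to the limit in the expansion of $B$ gives $\xi=\nabla B(z)v+\nabla^2B(z)(w,w)$. Likewise $\nabla G(z)v+\nabla^2G(z)(w,w)\in\Spanb{T_D(G(z))}$. Therefore
\[
v\in\psi_z^{-1}\big(\xi-\nabla^2B(z)(w,w),\,-\nabla^2G(z)(w,w)\big),
\]
which is a singleton. Uniqueness of the cluster point gives convergence of $v_k$, and this yields both \eqref{EqSecOrdTanSetAuxProbl} and the stated formula for $\hat z''(\omega;\eta,\xi)$.
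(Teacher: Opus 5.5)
Your proof is correct and follows essentially the same route as the paper. It uses the Lipschitz bound from \eqref{EqLipHatZ}, the inclusion $G(z')-G(z)\in\Spanb{T_D(G(z))}$ from \eqref{Eqtangent}, and the single-valuedness and linearity of $\hat w_z$ from Lemma \ref{LemPsiTheta}; your cluster-point-plus-uniqueness argument replaces the paper's direct representation of the difference quotients as $\hat w_z$ applied to convergent arguments. The boundedness of $v_k$, which you single out as the main obstacle, comes for free in the paper's version: expanding $B$ and $G$ to second order in the full increment $\hat z(\omega_k)-z$ (which is $O(t_k)$ by the Lipschitz property) rather than in $u_k$ gives $\nabla B(z)u_k=O(t_k^2)$ and $\nabla G(z)u_k\in\Spanb{T_D(G(z))}+O(t_k^2)$ directly, so no absorption of an $o(\norm{u_k})$ term is needed.
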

\begin{proof}
 Let $z:=\hat z(\omega)$. Then $B(z)=\omega$ by definition of the bijective mapping $\hat{z}$ in Proposition \ref{PropHomeo}. \par
 (i) We first show that the right hand side of (\ref{EqTanAuxProbl}) is included in $T_\Omega(\omega)$. For every $w\in T_\Gamma(z)$ there are sequences $t_k\downarrow 0$ and $w_k\to w$ with $z+t_kw_k\in \Gamma$. Thus $B(z+t_kw_k)\in \Omega$. Since
\[B(z+t_kw_k)=B(z)+t_k\nabla B(z){w_k}+\oo(t_k)=\omega+t_k\nabla B(z){w_k}+\oo(t_k)\in \Omega,\]
 then $\nabla B(z)w\in T_\Omega(\omega)$ follows.

 Conversely, consider $\eta\in T_\Omega(\omega)$ together with sequences $t_k\downarrow 0$ and $\eta_k\to\eta$ such that $\omega+t_k\eta_k\in\Omega$. Denoting $z_k:=\hat z(\omega+t_k\eta_k)$, we have $\omega+t_k\eta_k=B(z_k)$ by  definition of  $\hat{z}$. Hence
 \[B(z)+t_k\eta_k=B(z_k)=B(z)+\nabla B(z)(z_k-z)+\oo(\norm{z_k-z}),\] showing
 \begin{equation}\label{EqAuxEta}
\eta_k=\nabla B(z)\frac{z_k-z}{t_k}+
 \frac{\oo(\norm{z_k-z})}{t_k}.
\end{equation}
 Further since $z_k\in \Gamma$, we have $G(z_k)=G(z)+\nabla G(z)(z_k-z)+\oo(\norm{z_k-z})\in D$ and hence
 \[ \nabla G(z)(z_k-z)+\oo(\norm{z_k-z})\in T_D(G(z))\subseteq \Spanb{T_D(G(z))}\]
 for all $k$ sufficiently large, where the first step is due to \eqref{Eqtangent}. Hence
  \begin{equation} \label{EqW_z}
\nabla G(z)\frac{z_k-z}{t_k}+\frac{\oo(\norm{z_k-z})}{t_k}\in  \Spanb{T_D(G(z))}.
\end{equation}
  Combining (\ref{EqAuxEta})-(\ref{EqW_z}) together yields
 \[
 \myvec{\eta_k-\frac{\oo(\norm{z_k-z})}{t_k}\\ -\frac{\oo(\norm{z_k-z})}{t_k}}\in
 \myvec{\nabla B(z)\frac{z_k-z}{t_k} \\
 \nabla G(z)\frac{z_k-z}{t_k}} -\myvec{0_{\R^{\hat m}} \\ \Spanb{T_D(G(z))}}=\psi_z\left(\frac{z_k-z}{t_k}\right).
 \]
 Hence according to the definition of $\hat w_z$, we have
 \begin{equation}
 \frac{z_k-z}{t_k}=\hat w_z\left(\eta_k-\frac{\oo(\norm{z_k-z})}{t_k}, -\frac{\oo(\norm{z_k-z})}{t_k}\right). \label{EqDirectionderivative}
 \end{equation}
 Since $B(z_k)=\omega+t_k \eta_k$ and $B(z)=\omega$, then by (\ref{EqLipHatZ}) we have
 $\norm{z_k-z}\leq \frac{1}{\gamma_B}(\norm{B(z_k)-B(z)})=\frac{1}{\gamma_B} t_k\norm{\eta_k}$. It follows that $\oo(\norm{z_k-z})/t_k$ terms converge to $0$ as $k\to \infty$.
Taking the  limit in (\ref{EqDirectionderivative}),  we conclude that
 \[\lim_{k\to \infty}\frac{\hat z(\omega+t_k\eta_k)-\hat z(\omega)}{t_k}=\lim_{k\to\infty}\frac{z_k-z}{t_k}=\hat w_z(\eta,0).\]
 Because of $z_k\in\Gamma$ for all $k$ we have $\hat w_z(\eta,0) \in T_\Gamma(z)$ by the definition of tangent cone and from \eqref{EqAuxEta} we obtain $\eta=\nabla B(z)\hat w_z(\eta,0)$ by passing to the limit.
This proves (i).

 (ii) Fix $\eta\in T_\Omega(\omega)$ and $w= \hat z'(\omega;\eta)=\hat w_{z}(\eta,0)$ where $z=\hat z(\omega)$. By the definition of $\hat w_z$ we have $\nabla B(z)w=\eta.$
 Consider $v\in T^2_\Gamma(\hat z(\omega);w)$ together with sequences $t_k\downarrow 0$ and $v_k\to v$ such that $z_k:=z+t_k w+ \frac 12 t_k^2v_k\in \Gamma$ for all $k$. Because of
 \begin{align*}
 B(z_k)&= B(z)+t_k \nabla B(z)w+\frac 12 t_k^2\big(\nabla B(z){v_k}+\nabla ^2 B(z)(w,w)\big)+\oo(t_k^2)\\
 &=\omega + t_k\eta +\frac 12 t_k^2\big(\nabla B(z){v_k}+\nabla ^2 B(z)(w,w)\big)+\oo(t_k^2)\in\Omega,
 \end{align*}
 we readily obtain $\nabla B(z)v+\nabla ^2 B(z)(w,w)\in T^2_\Omega(\omega;\eta)$. This proves the inclusion ``$\supseteq$'' in \eqref{EqSecOrdTanSetAuxProbl}.

 Now consider $\xi\in T_\Omega^2(\omega;\eta)$ and sequences $t_k\downarrow 0$ and $\xi_k\to\xi$ with $\omega_k:=\omega +t_k\eta+\frac 12 t_k^2\xi_k\in \Omega$. Setting $z_k:=\hat z(\omega_k)$, we have $\omega_k=B(z_k)$, and hence
 \begin{align*}
 \omega +t_k\eta+\frac 12 t_k^2\xi_k &= B(z_k)=B(z)+t_k\nabla B(z)w+ \nabla B(z)(z_k-z-t_k w)+\frac 12 \nabla^2B(z)(z_k-z,z_k-z)+\oo(\norm{z_k-z}^2)\\
 &=\omega+t_k\eta+\nabla B(z)(z_k-z-t_k w)+\frac 12 \nabla^2B(z)(z_k-z,z_k-z)+\oo(\norm{z_k-z}^2).
 \end{align*}
In terms of $r_k:=(z_k-z-t_kw)/(\frac 12 t_k^2)$ and $s_k:=\nabla^2 B(z)(z_k-z,z_k-z)/t_k^2$, we obtain
 \begin{equation}\label{xi-1}
 \xi_k=\nabla B(z)r_k + s_k +\oo(\norm{z_k-z}^2)/{t_k^2}.
 \end{equation}
 Further we have
 \[G(z_k) = G(z)+t_k\nabla G(z)w + \nabla G(z)(z_k-z-t_kw)+\frac 12 \nabla^2G(z)(z_k-z,z_k-z)+\oo(\norm{z_k-z}^2)\in D,\]
 which together with \eqref{Eqtangent} and the fact that $T_D(G(z))$ is a cone implies
 \begin{equation} \nabla G(z)w+\nabla G(z)\frac{z_k-z-t_kw}{t_k}+\frac 1{2t_k}\nabla^2G(z)(z_k-z,z_k-z)+\frac{\oo(\norm{z_k-z}^2)}{t_k}\in T_D(G(z)). \label{EqG(z)}
 \end{equation}
  Since $z_k=\hat{z}(\omega_k)$ and $z=\hat{z}(\omega)$, then $B(z_k)=\omega_k$ and $B(z)=\omega$. By (\ref{EqLipHatZ}) we have
 \[\norm{z_k-z}\leq (1/\gamma_B)\norm{B(z_k)-B(z)}= (1/\gamma_B)\norm{\omega_k-\omega}=(1/\gamma_B)t_k\norm{\eta+\frac{t_k}{2}\xi_k}.\]
 So $\norm{z_k-z}/t_k$ is bounded. It further implies that $\oo(\norm{z_k-z}^2)/t_k$ and $\frac 1{2t_k}\nabla^2G(z)(z_k-z,z_k-z)$ converge to $0$ as $k\to \infty$.  Moreover by the definition of $z_k$, we have
 \begin{equation}\label{zk-z}
 \frac{z_k-z-t_kw}{t_k}=\frac{\hat z(\omega_k)-\hat z(\omega)}{t_k}-w=\frac{\hat z(\omega+t_k\eta_k)-\hat z(\omega)}{t_k}-w\to \hat z'(\omega,\eta)-w=0, \ \ {\rm as} \ \ k\to\infty,\end{equation}
  where $\eta_k:=\eta+\frac{1}{2}t_k\xi_k$ and $\omega+t_k\eta_k=\omega+t_k\eta+\frac{1}{2}t_k^2\xi_k\in \Omega$.
  Taking limits in (\ref{EqG(z)}) we obtain $\nabla G(z)w\in T_D(G(z))$. Moreover by (\ref{EqG(z)}) we have
 \begin{align*}
 \nabla G(z)r_k+ q_k+\frac{\oo(\norm{z_k-z}^2)}{t_k^2}&\in \frac{1}{(1/2)t_k}(T_D(G(z))-\nabla G(z)w)\subseteq
 \frac{1}{(1/2)t_k}\big(T_D(G(z))-T_D(G(z))\big) \nonumber\\
 &\subseteq \Spanb{T_D(G(z))}, 
 \end{align*}
 where $q_k:=\nabla^2 G(z)(z_k-z,z_k-z)/t_k^2$. This, together with \eqref{xi-1}, yields
 \[
 \myvec{\xi_k- s_k -\oo(\norm{z_k-\zb}^2)/{t_k^2}\\
 -q_k - \frac{\oo(\norm{z_k-z}^2)}{t_k^2} }\in \myvec{\nabla B(z)r_k\\ \nabla G(z)r_k}-\myvec{0_{\R^{\hat m}}\\ \Spanb{T_D(G(z))}}=\psi_z(r_k).
 \]
 By the definition of $\hat w_z$, we have $r_k=\hat w_z\left(\xi_k-s_k-\frac{\oo(\norm{z_k-z}^2)}{t_k^2},
 -q_k-\frac{\oo(\norm{z_k-z}^2)}{t_k^2}\right)$. Taking into account $(z_k-z)/t_k\to w$ by \eqref{zk-z}, we obtain
 \begin{align}
 \hat z''(\omega;\eta,\xi)&=\lim_{k\to \infty}\frac{\hat z(\omega+t_k\eta+\frac{1}{2}t_k^2\xi_k)-\hat z(\omega)-t_k\hat z'(\omega;\eta)}{\frac{1}{2}t_k^2}=\lim_{k\to \infty}\frac{z_k-z-t_kw}{\frac{1}{2}t_k^2} \nonumber\\
 &= \lim_{k\to\infty}r_k=\lim_{k\to\infty}
 \hat w_z\left(\xi_k-s_k-\frac{\oo(\norm{z_k-z}^2)}{t_k^2},
 -q_k-\frac{\oo(\norm{z_k-z}^2)}{t_k^2}\right) \label{r-k}\\
 &= \hat w_z(\xi-\nabla^2 B(z)(w,w), -\nabla ^2G(z)(w,w)). \nonumber
 \end{align}
By the definition of $\hat w_z$ and (\ref{Eq-inverse}), we obtain from the above equality that
 \begin{equation}\label{bzz}
 \nabla B(z)\hat z''(\omega;\eta,\xi)=\xi-\nabla^2 B(z)(w,w).\end{equation}
  Since $z_k=\hat z(\omega_k)\in \Gamma$,  $r_k=(z_k-z-t_kw)/(\frac{1}{2}t_k^2)\in (\Gamma-z-t_kw)/(\frac{1}{2} t_k^2)$ with $w=\hat z'(\omega;\eta)\in T_\Gamma(z)$ by (i) and $r_k\to \hat z''(\omega;\eta,\xi)$ by \eqref{r-k}, we readily obtain that $\hat z''(\omega;\eta,\xi)\in T^2_\Gamma(z;w)$.
 This, together with \eqref{bzz}, means that $\xi $ is in the right hand side of \eqref{EqSecOrdTanSetAuxProbl} and completes the proof.
\end{proof}

\begin{theorem}\label{omega-noral}
Under the assumptions of Theorem \ref{ThTanConeAuxProbl}, for every $\omega\in\Omega\cap \O$ we have
\begin{equation}\label{EqRegNormalAuxProbl}
  \widehat N_\Omega(\omega)=\{\omega^*\mv \nabla B(\hat z(\omega))^T\omega^*\in \widehat N_\Gamma(\hat z(\omega))\},
\end{equation}
 where $\hat z(\omega)$ is defined in Proposition \ref{PropHomeo}. Further, for every $\eta\in T_\Omega(\omega)$ there holds
\begin{equation}\label{EqLimNormalAuxProbl}
  N_\Omega(\omega;\eta)=\{\omega^*\mv \nabla B(\hat z(\omega))^T\omega^*\in N_\Gamma(\hat z(\omega); \hat z'(\omega;\eta))\},
\end{equation}
where $\hat z'(\omega;\eta)$ is defined in Theorem \ref{ThTanConeAuxProbl}.
\end{theorem}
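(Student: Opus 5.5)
We prove the Fréchet normal cone formula \eqref{EqRegNormalAuxProbl} by polarity and then get \eqref{EqLimNormalAuxProbl} by passing to limits. For $\omega\in\Omega\cap\O$ we use $\widehat N_\Omega(\omega)=T_\Omega(\omega)^\circ$ together with \eqref{EqTanAuxProbl}. This gives $\omega^*\in\widehat N_\Omega(\omega)$ iff $\skalp{\nabla B(z)^T\omega^*,w}\le 0$ for all $w\in T_\Gamma(z)$, where $z=\hat z(\omega)$. Since $\widehat N_\Gamma(z)=T_\Gamma(z)^\circ$, this is exactly the statement $\nabla B(z)^T\omega^*\in\widehat N_\Gamma(z)$, and \eqref{EqRegNormalAuxProbl} follows directly from Theorem \ref{ThTanConeAuxProbl}(i).

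For the directional limiting normal cone, we first prove ``$\subseteq$''. Take $\omega^*\in N_\Omega(\omega;\eta)$, realized by $t_k\downarrow0$, $\eta_k\to\eta$ and $\omega_k^*\to\omega^*$ with $\omega_k^*\in\widehat N_\Omega(\omega+t_k\eta_k)$. Set $z_k:=\hat z(\omega+t_k\eta_k)$ and $w_k:=(z_k-z)/t_k$. By Theorem \ref{ThTanConeAuxProbl}(i), $w_k\to\hat z'(\omega;\eta)$. The first part, applied at $\omega+t_k\eta_k\in\Omega\cap\O$, gives $\nabla B(z_k)^T\omega_k^*\in\widehat N_\Gamma(z+t_kw_k)$. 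Since $\nabla B(z_k)^T\omega_k^*\to\nabla B(z)^T\omega^*$, the definition of the directional limiting normal cone yields $\nabla B(z)^T\omega^*\in N_\Gamma(z;\hat z'(\omega;\eta))$.

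The reverse inclusion ``$\supseteq$'' is the main obstacle, because the multipliers must be pulled back. Take $\omega^*$ with $z^*:=\nabla B(z)^T\omega^*\in N_\Gamma(z;w)$, where $w=\hat z'(\omega;\eta)$. This membership is realized by $t_k\downarrow0$, $w_k\to w$ and $z_k^*\to z^*$ with $z_k^*\in\widehat N_\Gamma(z+t_kw_k)$. Then $z_k:=z+t_kw_k\in\Gamma\cap W$ for large $k$. Set $\omega_k:=B(z_k)$, so that $\hat z(\omega_k)=z_k$ and $\omega_k=\omega+t_k\eta_k$ with $\eta_k:=(B(z_k)-B(z))/t_k\to\nabla B(z)w=\eta$.

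We need $\omega_k^*\to\omega^*$ with $\nabla B(z_k)^T\omega_k^*\in\widehat N_\Gamma(z_k)$. The tool is \eqref{EqUniformMetrReg}, and the idea is to project away the part of $z_k^*$ orthogonal to $L_{z_k}$. Since $\Spanb{T_\Gamma(z_k)}\subseteq L_{z_k}$ by \eqref{EqSpanTanCones}, every vector in $L_{z_k}^\perp$ annihilates $T_\Gamma(z_k)$. Hence $z_k^*-L_{z_k}^\perp\subseteq\widehat N_\Gamma(z_k)$, because the Fréchet normal cone is the polar of the tangent cone.

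Now apply \eqref{EqUniformMetrReg} with $\theta_{z_k}$. We estimate $\dist{z_k^*,\theta_{z_k}(\omega^*)}\le\norm{z_k^*-\nabla B(z_k)^T\omega^*}$, and the right-hand side tends to $0$ because $z_k^*\to z^*=\nabla B(z)^T\omega^*$ and $\nabla B(z_k)\to\nabla B(z)$. So \eqref{EqUniformMetrReg} gives $\omega_k^*\in\theta_{z_k}^{-1}(z_k^*)$ with $\norm{\omega_k^*-\omega^*}\to0$. Membership $\omega_k^*\in\theta_{z_k}^{-1}(z_k^*)$ means $\nabla B(z_k)^T\omega_k^*\in z_k^*+L_{z_k}^\perp\subseteq\widehat N_\Gamma(z_k)$, using that $L_{z_k}^\perp$ is a subspace.

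By \eqref{EqRegNormalAuxProbl} applied at $\omega_k$, this says $\omega_k^*\in\widehat N_\Omega(\omega+t_k\eta_k)$. Since $\eta_k\to\eta$ and $\omega_k^*\to\omega^*$, we conclude $\omega^*\in N_\Omega(\omega;\eta)$, which completes the reverse inclusion.
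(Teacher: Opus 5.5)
Your proposal is correct and follows essentially the same route as the paper's proof. The Fréchet normal cone formula comes from polarity combined with \eqref{EqTanAuxProbl}, ``$\subseteq$'' comes from passing to the limit with $w_k=(z_k-z)/t_k\to\hat z'(\omega;\eta)$, and ``$\supseteq$'' pulls back the multipliers via \eqref{EqUniformMetrReg} for $\theta_{z_k}$ together with $L_{z_k}^\perp\subseteq\big(\Spanb{T_\Gamma(z_k)}\big)^\perp$. You also spell out the identification $\hat z(B(z_k))=z_k$, which holds because $z_k\in\Gamma\cap W$ for large $k$; the paper uses this step only tacitly.
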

\begin{proof}Let $z:=\hat z(\omega)$.
 Using \eqref{EqTanAuxProbl} we have $\omega^*\in \widehat N_\Omega(\omega)$ if and only if
 \[\skalp{\omega^*, \nabla B(z)w}=\skalp{\nabla B(z)^T\omega^*,w}\leq 0, \ \ \ \forall w\in T_\Gamma(z),\]
 i.e., $\nabla B(z)^T\omega^*\in\widehat N_\Gamma(z)$. This proves \eqref{EqRegNormalAuxProbl}.

 Now consider $\eta\in T_\Omega(\omega)$ and $\omega^*\in N_\Omega(\omega;\eta)$ together with sequences $t_k\downarrow 0$, $\eta_k\to\eta$ and $\omega_k^*\to \omega^*$ such that
 $\omega_k^*\in\widehat N_\Omega(\omega+t_k\eta_k)$. Setting $z_k:=\hat z(\omega+t_k\eta_k)$, by (\ref{EqRegNormalAuxProbl}) we have $\nabla B(z_k)^T\omega_k^*\in\widehat N_\Gamma(z_k)=\widehat N_\Gamma(z+t_kw_k)$ with $w_k:=(z_k-z)/t_k$. Since
 \[
 \lim_{k\to\infty} w_k=
 \lim_{k\to\infty}\frac{z_k-z}{t_k}=\lim_{k\to \infty}\frac{\hat z(\omega+t_k\eta_k)-\hat z(\omega)}{t_k}=\hat z'(\omega;\eta)\in T_\Gamma(z)\] by Theorem \ref{ThTanConeAuxProbl}(i), we obtain
 \[\nabla B(z)^T\omega^*=\lim_{k\to\infty}
 \nabla B(z_k)^T\omega_k^*\in N_\Gamma(z;\hat z'(\omega;\eta)).\]
 This proves the inclusion ``$\subseteq$'' in \eqref{EqLimNormalAuxProbl}.

 In order to prove the reverse inclusion, consider $\omega^*$ such that $\nabla B(z)^T\omega^*=z^*$ with $z^*\in N_\Gamma(z;w)$, where $w:=\hat z'(\omega;\eta)=\hat w_{\hat z(\omega)}(\eta,0)$. Hence
 $\nabla B(z)w=\nabla B(\hat z(\omega))w=\eta$ by the definition of $\hat w_{\hat z(\omega)}$ given in Lemma \ref{LemPsiTheta}. Since $z^*\in N_\Gamma(z;w)$, we can find sequences $t_k\downarrow 0$ and $(z_k^*,w_k)\to (z^*,w)$ such that $z+t_kw_k\in \Gamma$ and $z_k^*\in\widehat N_\Gamma(z+t_kw_k)$ for all $k$. Let $z_k:=z+t_kw_k$.
 For every $k$ since $\theta_{z_k}^{-1}(z_k^*)$ is closed, we can find
  $\omega_k^*\in\theta_{z_k}^{-1}(z_k^*)$ by Lemma \ref{LemPsiTheta}
  such that
   \begin{equation}\label{omega-lim}
   \norm{\omega_k^*-\omega^*}\leq\frac 1{\gamma_B}\dist{z_k^*,\theta_{z_k}(\omega^*)}\leq \frac 1{\gamma_B}\norm{z_k^*-\nabla B(z_k)^T\omega^*}\to 0 \ \ \mbox{ as } \ \ k\to\infty.
   \end{equation}
   Since $\omega_k^*\in\theta_{z_k}^{-1}(z_k^*)$, then
   $z^*_k\in \theta_{z_k}(\omega_k^*)=\nabla B(z_k)^T\omega_k^*-L_{z_k}^\perp$, and hence
   \[\nabla B(z_k)^T\omega_k^*-z_k^*=:w_k^*\in L_{z_k}^\perp\subseteq \Big(\Spanb{T_\Gamma(z_k)}\Big)^\perp,\]
   where the last inclusion follows by taking orthogonal complement on both sides of (\ref{EqSpanTanCones}). It follows that $\skalp{w_k^*,v}=0$ for all $v\in T_\Gamma(z_k)$ and therefore $w_k^* \in (T_\Gamma(z_k))^\circ=\widehat N_\Gamma(z_k)$. Since $z_k^*\in\widehat N_\Gamma(z+t_kw_k)=\widehat N_\Gamma(z_k)$, we have
 $\nabla B(z_k)^T\omega_k^*=z_k^*+
w_k^*\in \widehat N_\Gamma(z_k)$ because the regular normal cone is a convex cone. From \eqref{EqRegNormalAuxProbl} we conclude $\omega_k^*\in\widehat N_\Omega(B(z_k))$. Since
\[B(z_k)=B(z+t_kw_k)=B(z)+t_k\nabla B(z)w+\oo(t_k)=\omega +t_k\eta+\oo(t_k)\in \Omega,\]
then $\omega_k^*\in\widehat N_\Omega(\omega +t_k\eta+\oo(t_k))$. This, together with $\omega_k^*\to \omega^*$ by \eqref{omega-lim}, implies
$\omega^*\in N_\Omega(\omega;\eta)$.
\end{proof}

\subsection{Formulas for the curvature terms}

The following result gives the formula of second-order subderivative $d^2\delta_{\Omega}$, lower generalized support function $\hat\sigma_{T_\Omega^2}$, and support function $\sigma_{T_\Omega^2}$, which capture the local curvature of $\Omega$ appearing in second-order optimality conditions.

\begin{theorem}
  Under the assumptions of Theorem \ref{ThTanConeAuxProbl}, for every $\omega\in\Omega\cap \O$, $\eta\in T_\Omega(\omega)$, and $\omega^*\in \R^{\hat m}$, we have
  \begin{align}\label{EqSecOrdSubDerOmega}&{\rm d^2}\delta_\Omega(\omega;\omega^*)(\eta)= {\rm d^2}\delta_\Gamma(z;\nabla B(z)^T\omega^*)(w)-\skalp{\omega^*,\nabla^2 B(z)(w,w)},\\
  \label{EqSuppOmega}&\sigma_{T^2_\Omega(\omega;\eta)}(\omega^*)=\sigma_{T^2_\Gamma(z;w)}(\nabla B(z)^T\omega^*)+\skalp{\omega^*,\nabla^2 B(z)(w,w)},\\
  \label{EqLowerSuppOmega}&\hat \sigma_{T^2_\Omega(\omega;\eta)}(\omega^*)=\hat \sigma_{T^2_\Gamma(z;w)}(\nabla B(z)^T\omega^*)+\skalp{\omega^*,\nabla^2 B(z)(w,w)},
  \end{align}
  where $z:=\hat z(\omega)$ is defined in Proposition \ref{PropHomeo} and $w:=\hat z'(\omega;\eta)$ is defined in Theorem \ref{ThTanConeAuxProbl}.
\end{theorem}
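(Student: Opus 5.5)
The three formulas come from different tools, so I would prove them separately. Throughout, set $z:=\hat z(\omega)$ and $w:=\hat z'(\omega;\eta)$. Recall from Theorem \ref{ThTanConeAuxProbl}(i) that $\nabla B(z)w=\eta$ and $w\in T_\Gamma(z)$.

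\emph{Support function \eqref{EqSuppOmega}.} This follows directly from \eqref{EqSecOrdTanSetAuxProbl}. Every $\xi\in T^2_\Omega(\omega;\eta)$ has the form $\nabla B(z)v+\nabla^2B(z)(w,w)$ with $v\in T^2_\Gamma(z;w)$. Hence
$\skalp{\omega^*,\xi}=\skalp{\nabla B(z)^T\omega^*,v}+\skalp{\omega^*,\nabla^2B(z)(w,w)}$.
Taking the supremum over $v$ gives the claim. The conventions for empty sets (value $-\infty$ on both sides) are consistent.

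\emph{Second subderivative \eqref{EqSecOrdSubDerOmega}.} I would use the representation \eqref{indicatorf} and compare the two liminfs through $\hat z$ and $B$.

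For ``$\le$'': take $t_k\downarrow0$ and $w_k\to w$ with $z+t_kw_k\in\Gamma$ that realize ${\rm d^2}\delta_\Gamma(z;\nabla B(z)^T\omega^*)(w)$. Put $\eta_k:=(B(z+t_kw_k)-\omega)/t_k$, so that $\omega+t_k\eta_k\in\Omega$ and $\eta_k\to\eta$. A second-order Taylor expansion gives
$\skalp{\omega^*,\eta_k}=\skalp{\nabla B(z)^T\omega^*,w_k}+\tfrac{t_k}{2}\skalp{\omega^*,\nabla^2B(z)(w_k,w_k)}+o(t_k)$.
Therefore $-2\skalp{\omega^*,\eta_k}/t_k$ equals the $\Gamma$-quotient minus $\skalp{\omega^*,\nabla^2B(z)(w,w)}+o(1)$.

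For ``$\ge$'': take an arbitrary sequence $\omega+t_k\eta_k\in\Omega$ with $\eta_k\to\eta$. Set $z_k:=\hat z(\omega+t_k\eta_k)$ and $w_k:=(z_k-z)/t_k$. Then $z_k=z+t_kw_k\in\Gamma$ and $B(z_k)=\omega+t_k\eta_k$, and Theorem \ref{ThTanConeAuxProbl}(i) gives $w_k\to w$. The same expansion then bounds the $\Omega$-quotient from below by the $\Gamma$-quotient minus the curvature term.

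The boundary cases need a little care. If the $\Gamma$ liminf is $\pm\infty$, the additive correction is still finite. Since $\eta\in T_\Omega(\omega)$ and $w\in T_\Gamma(z)$, both feasible sets of sequences are nonempty. The essential point is the bijection $\hat z$ together with the Lipschitz estimate \eqref{EqLipHatZ}, which guarantees that $w_k$ stays bounded and converges.

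\emph{Lower generalized support function \eqref{EqLowerSuppOmega}.} This is the main obstacle, because $\hat\sigma$ involves a liminf over perturbed multipliers $\tilde\lambda$ and normals to the second-order tangent set. First I would obtain the normal cones of $T^2_\Omega(\omega;\eta)$. It is the image of $T^2_\Gamma(z;w)$ under the affine map $v\mapsto\nabla B(z)v+\nabla^2B(z)(w,w)$, and by \eqref{EqRegB} this map is injective on $L_z$. I expect $T^2_\Gamma(z;w)-T^2_\Gamma(z;w)\subseteq L_z$ to hold, by an argument as in the derivation of \eqref{r-k} via \eqref{Eqtangent}.

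Consequently $\xi=\nabla B(z)v+\nabla^2B(z)(w,w)$ with $v$ unique, and $\widehat N_{T^2_\Omega}(\xi)=\{\lambda\mid\nabla B(z)^T\lambda\in\widehat N_{T^2_\Gamma}(v)\}$. This mirrors the proof of \eqref{EqRegNormalAuxProbl}, using the tangent cone formula for the affine image. It yields
$\inf\{\skalp{\tilde\lambda,\xi}\mid\tilde\lambda\in\widehat N_{T^2_\Omega}(\xi)\}=\inf\{\skalp{\nabla B(z)^T\tilde\lambda,v}\mid \nabla B(z)^T\tilde\lambda\in\widehat N_{T^2_\Gamma}(v)\}+\skalp{\tilde\lambda,\nabla^2B(z)(w,w)}$.
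Taking liminf as $\tilde\lambda\to\omega^*$ immediately gives ``$\ge$'' in one direction, since $\nabla B(z)^T\tilde\lambda\to\nabla B(z)^T\omega^*$.

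For the converse I must lift a sequence $\tilde z^*\to\nabla B(z)^T\omega^*$ with $\tilde z^*\in\widehat N_{T^2_\Gamma}(v)$ to a sequence $\tilde\lambda\to\omega^*$. Here I would use the metric regularity estimate \eqref{EqUniformMetrReg} of $\theta_z$ to find $\tilde\lambda$ with $\nabla B(z)^T\tilde\lambda-\tilde z^*\in L_z^\perp$ and $\tilde\lambda\to\omega^*$. The correction term lies in $L_z^\perp$, which annihilates $T^2_\Gamma(z;w)-T^2_\Gamma(z;w)$. Hence it does not affect regular normality, and it only shifts $\skalp{\cdot,v}$ by a constant that vanishes in the limit: the constant is $\skalp{\cdot,v_0}$ for a fixed $v_0$, and it tends to $0$ because the correction tends to $0$.
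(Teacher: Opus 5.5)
Your proposal is correct and follows the paper's proof. Formulas \eqref{EqSecOrdSubDerOmega} and \eqref{EqSuppOmega} are argued exactly as there. For \eqref{EqLowerSuppOmega} you use the same two ingredients: $\Spanb{T^2_\Gamma(z;w)}\subseteq L_z$ together with injectivity of $\nabla B(z)$ on $L_z$ (this is \eqref{EqRegBnb} at $z$, not \eqref{EqRegB}), and the estimate \eqref{EqUniformMetrReg} to lift multipliers up to an $L_z^\perp$-correction.

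The only difference is packaging. You derive an exact identity for the regular normal cone of the bi-Lipschitz affine image $T^2_\Omega(\omega;\eta)=\nabla B(z)T^2_\Gamma(z;w)+\nabla^2B(z)(w,w)$, and you get the span inclusion directly from \eqref{Eqtangent}. The paper instead proves the two inclusions separately along sequences, via $\hat z''(\omega;\eta,\cdot)$ and the boundedness of $\hat w_z$. It also obtains the span inclusion from \eqref{tangent-set} and Lemma \ref{relation-tangent}.
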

\begin{proof}
(i) First we prove the equality (\ref{EqSecOrdSubDerOmega}).
  By (\ref{indicatorf}), consider  sequences $t_k\downarrow 0$ and $\eta_k\to\eta$ with $\omega_k:=\omega+t_k\eta_k\in \Omega$ satisfying
 ${\rm d^2}\delta_\Omega(\omega;\omega^*)(\eta)=\displaystyle \lim_{k\to\infty}-\frac{2\skalp{\omega^*,\eta_k}}{t_k}$. Let $z_k:=\hat z(\omega_k)\in \Gamma$ and $w_k:=(z_k-z)/t_k$. Since $\omega_k=B(z_k)$ and $\omega=B(z)$, then it follows from Theorem \ref{ThTanConeAuxProbl}(i) that
  \[\lim_{k\rightarrow \infty} w_k=\lim_{k\to\infty}\frac{z_k-z}{t_k}=
  \lim_{k\to\infty}\frac{\hat z(\omega_k)-\hat z(\omega)}{t_k}=\lim_{k\to \infty} \frac{\hat z(\omega+t_k\eta_k)-\hat z(\omega)}{t_k}=
 \hat{z}'(\omega;\eta)=w.\]
 From
 \[t_k\eta_k=\omega_k-\omega=B(z_k)-B(z)=\nabla B(z)t_k w_k+\frac 12 t_k^2\nabla^2 B(z){(w_k,w_k)}+\oo(t_k^2),\]
 we deduce
  \begin{align*}{\rm d^2}\delta_\Omega(\omega;\omega^*)(\eta)&=\lim_{k\to\infty}-\frac{2\skalp{\omega^*,\eta_k}}{t_k}=\lim_{k\to\infty}-\frac{2\skalp{\nabla B(z)^T\omega^*,w_k}}{t_k}-\skalp{\omega^*,\nabla^2 B(z)(w,w)}\\
  &=\lim_{k\to\infty}\frac{\delta_\Gamma(z+t_kw_k)-\delta_\Gamma(z)-t_k\langle \nabla B(z)^T\omega^*, w_k\rangle}{\frac{1}{2}t_k^2}-\skalp{\omega^*,\nabla^2 B(z)(w,w)} \\
    &\geq {\rm d^2}\delta_\Gamma(z;\nabla B(z)^T\omega^*)(w)-\skalp{\omega^*,\nabla^2 B(z)(w,w)}.
  \end{align*}

  In order to prove the reverse  inequality of (\ref{EqSecOrdSubDerOmega}),  by (\ref{indicatorf}), choose sequences $t_k\downarrow 0$ and $w_k\to w$ with $z_k:=z+t_kw_k\in\Gamma$ satisfying ${\rm d^2}\delta_\Gamma(z;\nabla B(z)^T\omega^*)(w)=\lim_{k\to\infty}-2\skalp{\nabla B(z)^T\omega^*,w_k}/t_k$. Since $w=\hat z'(\omega,\eta)=\hat w_{\hat z(\omega)}(\eta,0)=\hat w_z(\eta,0)$ by Theorem \ref{ThTanConeAuxProbl}(i), then $\nabla B(z)w=\eta$ according to the definition of $\hat w_z$. Let $\eta_k:=(B(z_k)-B(z))/t_k$. Then $\omega+t_k\eta_k=B(z)+t_k\eta_k=B(z_k)\in \Omega$. Taking into account $B(z_k)=B(z)+t_k\nabla B(z)w_k+\frac 12 t_k^2\nabla ^2B(z){(w_k,w_k)}+\oo(t_k^2)$, we obtain
  \begin{align*}{\rm d^2}\delta_\Omega(\omega;\omega^*)(\eta)&\leq \liminf_{k\to\infty}-\frac{2\skalp{\omega^*,\eta_k}}{t_k}
  =\liminf_{k\to\infty}-\frac{2\skalp{\omega^*, \nabla B(z)w_k}}{t_k}- \skalp{\omega^*,\nabla^2B(z)(w,w)}\\
  &={\rm d^2}\delta_\Gamma(z;\nabla B(z)^T\omega^*)(w)-\skalp{\omega^*,\nabla^2B(z)(w,w)}
  \end{align*}
  and \eqref{EqSecOrdSubDerOmega} is proved.

  (ii) The formula \eqref{EqSuppOmega} follows from \eqref{EqSecOrdTanSetAuxProbl} because of
  \begin{align*}\sigma_{T^2_\Omega(\omega;\eta)}(\omega^*)&=\sup\{\skalp{\omega^*,\nabla B(z)v+\nabla^2B(z)(w,w)}\mv v\in T^2_\Gamma(\hat z(\omega);w)\}\\
  & =\sup\{ \skalp{v, \nabla B(z)^T \omega^*}  \mv v\in T^2_\Gamma(\hat z(\omega);w)  \}+ \skalp{\omega^*,\nabla^2B(z)(w,w)} \\
  & = \sigma_{T^2_\Gamma(z;w)}(\nabla B(z)^T\omega^*)+\skalp{\omega^*,\nabla^2B(z)(w,w)}.
  \end{align*}

(iii)  We now prove \eqref{EqLowerSuppOmega}. By virtue of Theorem \ref{ThTanConeAuxProbl}(ii) we have that either both $T^2_\Omega(\omega;\eta)$ and $T^2_\Gamma(z;w)$ are not empty or both sets are empty, where in the latter case both sides in \eqref{EqLowerSuppOmega} evaluate to $-\infty$ by definition. We now assume that both sets $T^2_\Omega(\omega;\eta)$ and $T^2_\Gamma(z;w)$ are not empty. We prove first the inequality `$\geq$' in \eqref{EqLowerSuppOmega}. If $\hat\sigma_{T^2_\Omega(\omega;\eta)}(\omega^*)=\infty$ this inequality certainly holds and therefore we can assume $\hat\sigma_{T^2_\Omega(\omega;\eta)}(\omega^*)<\infty$. Then
  by definition we can find a sequence $(\xi_k,\omega_k^*)\in\gph \widehat N_{T^2_\Omega(\omega;\eta)}$ such that $\omega_k^*\to\omega^*$ and $\hat\sigma_{T^2_\Omega(\omega;\eta)}(\omega^*)=\lim_{k\to\infty}\skalp{\omega_k^*,\xi_k}$.
  For $\xi_k\in T_\Omega^2(\omega;\eta)$, we define $v_k:=\hat z''(\omega;\eta,\xi_k)$ as in Theorem \ref{ThTanConeAuxProbl}(ii). Hence
 \[
 v_k=\hat z''(\omega;\eta,\xi_k)= \hat w_{z}(\xi_k-\nabla^2 B(z)(w,w),-\nabla^2 G(z)(w,w))\in T^2_\Gamma(z;w).
 \]
 By the definition of $\hat w_z$, we obtain from the above that
 $\nabla B(z)v_k=\xi_k-\nabla^2B(z)(w,w),$
 i.e.,
 \[\xi_k=\nabla B(z)\nu_k+\nabla^2 B(z)(w,w).\]
 For $v\in T^2_\Gamma(z;w)$, let $\xi:=\nabla B(z)v+\nabla^2 B(z)(w,w).$ Then $\xi\in T_\Omega^2(\omega;\eta)$ by \eqref{EqSecOrdTanSetAuxProbl} and $\xi-\xi_k=\nabla B(z)(v-v_k)$. Since $\omega_k^* \in \widehat N_{T^2_\Omega(\omega;\eta)}(\xi_k)$, we obtain
  \begin{align*}
  \skalp{\nabla B(z)^T\omega_k^*,v-v_k}&=\skalp{\omega_k^*, \nabla B(z)(v-v_k)}=\skalp{\omega_k^*, \xi-\xi_k}
   \leq \oo(\norm{\xi-\xi_k})\\
   & = \oo(\norm{\nabla B(z)(v-v_k)})\leq \oo(\norm{v-v_k}),
  \end{align*}
  implying $\nabla B(z)^T\omega_k^*\in \widehat N_{T^2_\Gamma(z;w)}(v_k)$. Hence   \begin{align*}\hat\sigma_{T^2_\Omega(\omega;\eta)}(\omega^*)&=\lim_{k\to\infty}\skalp{\omega_k^*,\xi_k}=\lim_{k\to\infty}\skalp{\omega_k^*,\nabla B(z)v_k+\nabla^2B(z)(w,w)}\\
  &= \lim_{k\to\infty}\skalp{\nabla B(z)^T \omega_k^*,v_k}+\skalp{\omega^*,\nabla^2B(z)(w,w)}\\
  &\geq  \hat\sigma_{T^2_\Gamma(z;w)}(\nabla B(z)^T\omega^*)+\skalp{\omega^*,\nabla^2 B(z)(w,w)}.
  \end{align*}

  To show the inequality `$\leq$' in \eqref{EqLowerSuppOmega}, assume that $\hat\sigma_{T^2_\Gamma(z;w)}(\nabla B(z)^T\omega^*)<\infty$ and consider sequences $(v_k,z_k^*)$ with $z_k^*\in \widehat N_{T^2_\Gamma(z;w)}(v_k)$,  $z_k^*\to \nabla B(z)^T\omega^*$ and $\hat\sigma_{T^2_\Gamma(z;w)}(\nabla B(z)^T\omega^*)=\lim_{k\to\infty}\skalp{z_k^*,v_k}$. It is well known that
  \begin{equation}\label{tangent-set}
  T^2_\Gamma(z;w)\subseteq\{v\mv \nabla G(z)v+\nabla^2G(z)(w,w)\in T^2_D(G(z);\nabla G(z)w)=T_{T_D(G(z))}(\nabla G(z)w)\},
  \end{equation}
  see, e.g., \cite[Proof of Proposition 3.33]{BonSh00}. Since $T_{T_D(G(z))}(\nabla G(z)w)=T_{T_D(G(z))}(\alpha\nabla G(z)w)$ for all $\alpha>0$, then
  $\Spanb{T_{T_D(G(z))}(\nabla G(z)w)}\subseteq \Spanb{T_{T_D(G(z))}(0)}=\Spanb{T_D(G(z))}$ by Lemma \ref{relation-tangent}. Hence taking into account \eqref{tangent-set}, we have
  \begin{align*}
  T_\Gamma^2(z;w)-T_\Gamma^2(z;w)& \subseteq \big\{v\,|\, \nabla G(z)v\in T_{T_D(G(z))}(\nabla G(z)w)-T_{T_D(G(z))}(\nabla G(z)w) \big\}\\
  & \subseteq \big\{v\,|\, \nabla G(z)v\in \big(T_{T_D(G(z))}(\nabla G(z)w)\big)^+ \big\}\\
  & \subseteq \big\{v\,|\, \nabla G(z)v\in \big(T_D(G(z))\big)^+ \big\},
  \end{align*}
  which together with \eqref{EqSpanTanCones} yields
  \begin{equation}
  \Spanb{T^2_\Gamma(z;w)}\subseteq\{v\,|\, \nabla G(z)v\in \Spanb{T_D(G(z))}\}\subseteq L_z.\label{Eq(33)}
  \end{equation}
  Since $\theta_{z}^{-1}(z_k^*)$ is closed, by
   Lemma \ref{LemPsiTheta} we can find some $\omega_k^*\in\theta_{z}^{-1}(z_k^*)$ such that
\begin{equation} \norm{\omega_k^*-\omega^*}\leq\frac 1{\gamma_B}\dist{z_k^*,\theta_{z}(\omega^*)}\leq \frac 1{\gamma_B}\norm{z_k^*-\nabla B(z)^T\omega^*}\to 0\ \ \mbox{ as }\ \ k\to\infty. \label{Eqtwoineqs}
\end{equation}
Since $z_k^*\in \theta_{z}(\omega_k^*)$, by definition of $\theta_{z}$ we have $\nabla B(z)^T\omega_k^*-z_k^*=:w_k^*\in L_{z}^\perp\subseteq \big(\Spanb{T^2_\Gamma(z;w)}\big)^\perp,$ where the last inclusion follows from (\ref{Eq(33)}).
It follows that $\skalp{w_k^*,v-v_k}=0$ for all $v\in T^2_\Gamma(z;w)$ and therefore $w_k^* \in  \widehat N_{T^2_\Gamma(z;w)}(v_k)$, implying
 \begin{equation}\label{regular-gamma}
 \nabla B(z)^T\omega_k^*=z_k^*+w_k^*\in \widehat N_{T^2_\Gamma(z;w)}(v_k).
  \end{equation}
  By (\ref{Eqtwoineqs}), we know $\omega_k^*\to \omega^*$ and hence
 $w^*_k=\nabla B(z)^T\omega^*_k-z^*_k\to \nabla B(z)^T\omega^*-\nabla B(z)^T\omega^*=0$.
 Thus
  \begin{equation}\label{w-v}
  \skalp{w_k^*,v_k}=\skalp{w_k^*,v}\to 0, \ \ \ {\rm as} \ \ k\to \infty.
  \end{equation}
Take $\xi\in T^2_\Omega(\omega;\eta)$ and let $\xi_k:=\nabla B(z)v_k+\nabla^2 B(z)(w,w)$. Since $v_k\in  T^2_\Gamma(z;w)$ we have $ \xi_k \in T^2_\Omega(\omega;\eta)$ by (\ref{EqSecOrdTanSetAuxProbl}). According to Theorem \ref{ThTanConeAuxProbl}
\begin{align*}
\hat z''(\omega;\eta,\xi)&= \hat w_{z}(\xi-\nabla^2 B(z)(w,w),-\nabla^2 G(z)(w,w))\in T^2_\Gamma(z;w),\\
\hat z''(\omega;\eta,\xi_k)&= \hat w_{z}(\xi_k-\nabla^2 B(z)(w,w),-\nabla^2 G(z)(w,w))\in T^2_\Gamma(z;w),
\end{align*}
and hence
\begin{equation}\label{z-z}
\hat z''(\omega;\eta,\xi)-\hat z''(\omega;\eta,\xi_k)=\hat w_{z}(\xi-\xi_k,0),
\end{equation}
since $\hat w_z$ is a linear mapping by Lemma \ref{LemPsiTheta}.
 By the definition of $\hat w_z$, we obtain from the above  that
 \begin{align}
 \nabla B(z)\hat z''(\omega;\eta,\xi)&= \xi -\nabla^2 B(z)(w,w),\label{zv}\\
 \nabla B(z)\hat z''(\omega;\eta,\xi_k)&= \xi_k -\nabla^2 B(z)(w,w). \label{vk}
 \end{align}
 Recalling $\xi_k=\nabla B(z)v_k+\nabla^2 B(z)(w,w)$, then $\nabla B(z)\hat z''(\omega;\eta,\xi_k)=\nabla B(z)v^k$ by \eqref{vk}.
 Since $v^k\in T^2_\Gamma(z;w)$ and $\hat z''(\omega;\eta,\xi_k)\in T^2_\Gamma(z;w)$, then
 $v^k-\hat z''(\omega;\eta,\xi_k)\in \Spanb{T^2_\Gamma(z;w)}\subseteq L_z$ by \eqref{Eq(33)}. It follows from \eqref{EqRegBnb} that
 $\norm{v^k-\hat z''(\omega;\eta,\xi_k)}\leq 1/( \gamma_B)\norm{\nabla B(z)(v^k-\hat z''(\omega;\eta,\xi_k))}=0$. So $v_k=\hat z''(\omega;\eta,\xi_k)$.
 Hence for $\xi\in  T^2_\Omega(\omega;\eta)$, we have
 \begin{align*}
 \skalp{\omega_k^*,\xi-\xi_k}&= \skalp{\omega_k^*,\nabla B(z)\big(\hat z''(\omega;\eta,\xi)-\hat z''(\omega;\eta,\xi_k)\big)}
= \skalp{\nabla B(z)^T\omega^*_k, \hat z''(\omega;\eta,\xi)-v_k}\\
&\leq  \oo(\norm{\hat z''(\omega;\eta,\xi)- \hat z''(\omega;\eta,\xi_k)}),
 \end{align*}
where the first equality is due to \eqref{zv} and \eqref{vk}, and the inequality
comes from $\nabla B(z)^T\omega_k^*\in \widehat N_{T^2_\Gamma(z;w)}(v_k)$ by \eqref{regular-gamma}. This together with
\eqref{z-z} and the boundedness of $\norm{\hat w_z}$ by Lemma \ref{LemPsiTheta} yields
$ \skalp{\omega_k^*,\xi-\xi_k} \leq  \oo(\norm{\xi-\xi_k})$.
 So $\omega_k^*\in \widehat N_{T^2_\Omega(\omega;\eta)}(\xi_k)$. Therefore
 \begin{align*}\hat\sigma_{T^2_\Omega(\omega;\eta)}(\omega^*)&\leq \lim_{k\to\infty}\skalp{\omega_k^*,\xi_k} = \lim_{k\to\infty} \skalp{\omega_k^*,\nabla B(z)v_k+\nabla^2 B(z)(w,w)}\\
 &=\lim_{k\to\infty} \skalp{\nabla B(z)^T\omega_k^*, v_k}
 +\skalp{\omega^*, \nabla^2 B(z)(w,w)}
 =\lim_{k\to\infty}\skalp{z_k^*+w_k^*,v_k}+\skalp{\omega^*,\nabla^2 B(z)(w,w)}\\
 &= \lim_{k\to\infty}\skalp{z_k^*,v_k}+\skalp{\omega^*,\nabla^2 B(z)(w,w)}
 =  \hat\sigma_{T^2_\Gamma(z;w)}(\nabla B(z)^T\omega^*)+\skalp{\omega^*,\nabla^2 B(z)(w,w)},
 \end{align*}
 where the third equality comes from \eqref{regular-gamma} and the forth equality is due to \eqref{w-v}. Hence \eqref{EqLowerSuppOmega} is verified. The proof is complete.
\end{proof}

By using the formula for the second-order subderivative, we can further obtain the expression of
the proximal normal cone.

\begin{corollary}\label{cor-dom-normal}
Under the assumptions of Theorem \ref{ThTanConeAuxProbl}, for every $\omega\in\Omega\cap \O$ and $\eta\in T_\Omega(\omega)$ we have
\begin{align*} 
&\Np_\Omega(\omega;\eta)=\{\omega^*\,|\, \nabla B(z)^T\omega^*\in\Np_\Gamma(z;w)\},\ \widehat N^p_\Omega(\omega;\eta)=\{\omega^* \,|\, \nabla B(z)^T\omega^*\in \widehat N^p_\Gamma(z;w)\},\\ 
  &\dom \sigma_{T^2_\Omega(\omega;\eta)}=\{\omega^*\,|\, \nabla B(z)^T\omega^*\in \dom \sigma_{T^2_\Gamma(z;w)}\},\ \dom \hat\sigma_{T^2_\Omega(\omega;\eta)}=\{\omega^*\,|\, \nabla B(z)^T\omega^*\in \dom \hat\sigma_{T^2_\Gamma(z;w)}\},
\end{align*}
where $z:=\hat z(\omega)$ and $w:=\hat z'(\omega;\eta)$.
\end{corollary}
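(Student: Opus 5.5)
The two domain formulas follow directly from the preceding theorem. By \eqref{EqSuppOmega}, $\sigma_{T^2_\Omega(\omega;\eta)}(\omega^*)<\infty$ exactly when $\sigma_{T^2_\Gamma(z;w)}(\nabla B(z)^T\omega^*)<\infty$, because the additive term $\skalp{\omega^*,\nabla^2B(z)(w,w)}$ is finite. The same argument applied to \eqref{EqLowerSuppOmega} gives the formula for $\dom\hat\sigma$. In the degenerate case where both second-order tangent sets are empty, both sides equal $-\infty$, and the domains are the whole space on both sides, so the claimed identities still hold.

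For the proximal prenormal cone, I would use the characterization of $\Np_C(\bar c;u)$ through the second-order subderivative. Specifically, for $u\in T_C(\bar c)$ the definition of $\Np_C(\bar c;u)$ with directional neighborhoods should be equivalent to
\[
\liminf_{t\downarrow0,\,u'\to u,\,\bar c+tu'\in C}\frac{-\skalp{c^*,u'}}{t}>-\infty ,
\]
that is, ${\rm d^2}\delta_C(\bar c;c^*)(u)>-\infty$. I expect this equivalence was established in \cite{BeGfrYeZhangZhou}; if not, it can be checked by hand. Suppose $\skalp{c^*,c'-\bar c}\le\gamma\|c'-\bar c\|^2$ on $\bar c+V_{\delta,\rho}(u)$. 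Points $\bar c+tu'$ with $u'\to u$ eventually lie in this neighborhood, and this gives the lower bound $-2\gamma\|u\|^2$. Conversely, if the prenormal inequality fails for all $\delta,\rho,\gamma$, I would take $\gamma=k$ and $\delta,\rho=1/k$ and extract a sequence $c_k=\bar c+t_ku_k'$ with normalized directions converging to $u/\|u\|$. After rescaling $t_k$ so that $u_k'\to u$, this drives the liminf to $-\infty$. (For $u=0$, $V_{\delta,\rho}(0)=\delta\mathbb B$, and the argument goes through with all directions $u'$.)

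With this characterization in hand, \eqref{EqSecOrdSubDerOmega} shows that ${\rm d^2}\delta_\Omega(\omega;\omega^*)(\eta)>-\infty$ if and only if ${\rm d^2}\delta_\Gamma(z;\nabla B(z)^T\omega^*)(w)>-\infty$. Here $w=\hat z'(\omega;\eta)\in T_\Gamma(z)$ by Theorem \ref{ThTanConeAuxProbl}(i), so both directions are tangent and neither cone is trivially empty by convention. This yields the first formula. For $\widehat N^p$ it remains to intersect with the orthogonality constraint. Since $\nabla B(z)w=\eta$, we have $\skalp{\omega^*,\eta}=\skalp{\nabla B(z)^T\omega^*,w}$, so $\omega^*\in[\eta]^\perp$ iff $\nabla B(z)^T\omega^*\in[w]^\perp$, and the second formula follows.

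The main obstacle is the equivalence between $\Np$ and finiteness of the second subderivative. The delicate point is matching the directional neighborhood $V_{\delta,\rho}(u)$, which constrains only the direction and the size of the displacement, with the parametrization $\bar c+tu'$, $u'\to u$. One has to handle points whose norm is not proportional to $t\|u\|$ by reparametrizing with $t:=\|c'-\bar c\|/\|u\|$, which requires $u\neq0$. If this step becomes messy, an alternative is to prove the $\Np$ formula directly by transporting the quadratic inequality through $B$ and $\hat z$. That approach would use the Lipschitz bound \eqref{EqLipHatZ}, the Taylor expansion of $B$, and the fact that $\hat z$ maps directional neighborhoods of $\eta$ into directional neighborhoods of $w$, which follows from Theorem \ref{ThTanConeAuxProbl}(i). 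One should also check that $\nabla B(z)^T\omega^*$ absorbs the second-order term $\skalp{\omega^*,\nabla^2B(z)(\cdot,\cdot)}=O(\|z'-z\|^2)$.
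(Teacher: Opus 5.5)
Your proposal is correct and follows the paper's proof: the paper also uses the equivalence $\omega^*\in\Np_\Omega(\omega;\eta)\iff{\rm d^2}\delta_\Omega(\omega;\omega^*)(\eta)>-\infty$ (cited as \cite[Proposition 2.18]{BeGfrYeZhangZhou}) for tangent directions, transfers it to $\Gamma$ via \eqref{EqSecOrdSubDerOmega} with $w\in T_\Gamma(z)$, handles $\widehat N^p$ through $\nabla B(z)w=\eta$, and reads the two domain formulas off \eqref{EqSuppOmega} and \eqref{EqLowerSuppOmega}. Your hand-proof of that equivalence is only needed if you choose not to cite it, and in that case the converse for $u=0$ needs a different rescaling, e.g.\ $t_k:=\|c_k-\bar c\|\,k^{1/4}$, instead of $t:=\|c'-\bar c\|/\|u\|$.
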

\begin{proof}
  For every $\omega\in\Omega\cap \O$ and $\eta\in T_\Omega(\omega)$, we know $z=\hat z(\omega)\in\Gamma$ and $w=\hat z'(\omega;\eta)\in T_\Gamma(z)$ by Proposition \ref{PropHomeo} and Theorem \ref{ThTanConeAuxProbl} respectively. According to \cite[Proposition 2.18]{BeGfrYeZhangZhou} and \eqref{EqSecOrdSubDerOmega} we have
  \begin{align*}
  \omega^*\in \Np_\Omega(\omega;\eta) & \Longleftrightarrow d^2 \delta_\Omega(\omega;\omega^*)(\eta)>-\infty \Longleftrightarrow
  d^2 \delta_\Gamma(z;\nabla B(z)^T\omega^*)(w)>-\infty \nonumber\\
  & \Longleftrightarrow \nabla B(z)^T\omega^*\in \Np_\Gamma(z;w). 
  \end{align*}
  Since $w=\hat z'(\omega;\eta)=\hat w_{z}(\eta,0)$ by Theorem \ref{ThTanConeAuxProbl},  the definition of $\hat w_z$ gives us $\nabla B(z)w=\eta$.
  From the above we thus conclude
  \begin{align*}
  \widehat N^p_\Omega(\omega;\eta)&=\Np_\Omega(\omega;\eta)\cap \{\eta\}^\perp=
  \{\omega^*\,|\, \nabla B(z)^T\omega^*\in \Np_\Gamma(z;w), \ \langle \omega^*, \eta\rangle=0 \}\\
  &= \{\omega^*\,|\, \nabla B(z)^T\omega^*\in \Np_\Gamma(z;w), \ \langle \omega^*, \nabla B(z)w \rangle=0 \}\\
  &= \{\omega^*\,|\, \nabla B(z)^T\omega^*\in \Np_\Gamma(z;w), \ \langle \nabla B(z)^T\omega^*, w\rangle=0 \}\\
  &= \{\omega^*\,|\, \nabla B(z)^T\omega^*\in \Np_\Gamma(z;w)\cap\{w\}^\perp \}\\
   &= \{\omega^*\,|\, \nabla B(z)^T\omega^*\in \widehat N^p_\Gamma(z;w)\}.
  \end{align*}
  This proves the first formula while the second follows directly from \eqref{EqSuppOmega} and \eqref{EqLowerSuppOmega}.
  \end{proof}

\if{
\begin{remark}
  Note that the above stated results are valid without any constraint qualification conditions on the system $G(z)\in D$.
\end{remark}
}\fi

\section{Variational analysis of the constraint structure \eqref{EqOmega-1}}

In this section, we apply the results of the preceding section to the set $\Omega$
 given by \eqref{EqOmega-1}, i.e., to the particular setting
\[
z:=(x,\eta), \ \ \ B(x, \eta):=\myvec{x\\ b(x)^T\eta},\ \ \ G(x,\eta):= \myvec{g(x)\\
\eta },\ \ \ D:=\gph N_P,
\]
where $g:\mathbb{R}^n\rightarrow \mathbb{R}^l$, $b:\mathbb{R}^n\rightarrow \mathbb{R}^{l\times m}$, and $P \subseteq \mathbb{R}^l$ is a convex polyhedral set. In this case,
\[
\Omega=\{B(z)\,|\, G(z)\in D\}=\{ (x, b(x)^T\eta) \,|\, \eta\in N_P(g(x))\}
\]
and
\[\Gamma=\{z\, |\, G(z)\in D\}=\{(x,\eta)\, |\, \eta\in N_P(g(x))\}.\]
We perform our analysis under the following assumption.
\begin{assumption} Let $\bar x\in g^{-1}(P)$. Assume that the following conditions hold at $\bar x$:
\begin{subequations}\label{EqBasicAss}
\begin{align}
  \label{EqBasicAss_g}\nabla g(\xb)^Td^*=0,&\ d^* \in \Spanb{N_P(g(\xb))}\ \Rightarrow d^*=0,\\
  \label{EqBasicAss_b}b(\xb)^Td^*=0,&\ d^* \in \Spanb{N_P(g(\xb))}\ \Rightarrow d^*=0.
\end{align}
\end{subequations}
\end{assumption}

\begin{proposition}\label{GP-condition-1}
  Let $\zb:=(\bar x,\dba)\in \Gamma$. Then condition \eqref{EqBasicAss_b} implies Assumption \ref{AssA1} with $\bar\omega:=(\bar x, b(\bar x)^T\dba)$ and condition \eqref{EqBasicAss_g} implies   Assumption \ref{AssA2} for every $\bar w$ satisfying $\nabla G(\zb)\wb\in T_D(G(\zb))$. Moreover, condition \eqref{EqDirNonDegen1} is fulfilled as well.
\end{proposition}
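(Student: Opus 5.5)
We verify the three claims separately, using throughout the structure $G(x,\eta)=(g(x),\eta)$ and $D=\gph N_P$. Write $\bar d:=g(\xb)$ and $K:=\K_P(\bar d,\dba)$. By Lemma \ref{LemReduction}, $T_D(G(\zb))=\gph N_K\subseteq K\times K^\circ$. The basic linear-algebra fact we need is
\[
K^\circ=N_P(\bar d)+[\dba]\subseteq \Spanb{N_P(\bar d)},\qquad \lin K\supseteq \lin{T_P(\bar d)}=\big(N_P(\bar d)\big)^\perp .
\]
The first part uses $\dba\in N_P(\bar d)$ and polyhedrality. Consequently $\Spanb{T_D(G(\zb))}\subseteq K^+\times \Spanb{N_P(\bar d)}$.

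\emph{Condition \eqref{EqRegB}.} Let $w=(u,v)\in L_{\zb}$ with $\nabla B(\zb)w=0$. Since $\nabla B(\zb)w=\big(u,\ \nabla_x(b(\cdot)^T\dba)(\xb)u+b(\xb)^Tv\big)$, we get $u=0$ and then $b(\xb)^Tv=0$. Because $w\in L_{\zb}$, we have $v\in \Spanb{K^\circ}\subseteq \Spanb{N_P(\bar d)}$, and \eqref{EqBasicAss_b} forces $v=0$.

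\emph{$\zb\in\Sigma_{\bar\omega}$.} Take $\omega^k=(x_k,b(x_k)^T\eta_k)\to\bar\omega$ with $\eta_k\in N_P(g(x_k))$. Every preimage in $\Gamma$ has the form $(x_k,\eta_k')$ with $b(x_k)^T\eta'_k=b(x_k)^T\eta_k$, so it suffices to show that every such $\eta_k'$ tends to $\dba$.
\begin{enumerate}
\item[(a)] For $k$ large, polyhedrality of $P$ gives $N_P(g(x_k))\subseteq N_P(\bar d)$.
\item[(b)] Boundedness: if $\norm{\eta_k'}\to\infty$, normalize to get a unit limit $\eta\in N_P(\bar d)$ with $b(\xb)^T\eta=0$, contradicting \eqref{EqBasicAss_b}.
\item[(c)] Identification of the limit: any cluster point $\bar\eta\in N_P(\bar d)$ satisfies $b(\xb)^T(\bar\eta-\dba)=0$ with $\bar\eta-\dba\in\Spanb{N_P(\bar d)}$, so again $\bar\eta=\dba$.
\end{enumerate}
Hence the whole sequence of preimages converges to $\zb$, which proves Assumption \ref{AssA1}.

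\emph{Nondegeneracy \eqref{EqDirNonDegen1} and Assumption \ref{AssA2}.} Let $p^*=(q^*,r^*)\in\big(N_D(G(\zb);\nabla G(\zb)\wb)\big)^+$ with $\nabla G(\zb)^Tp^*=(\nabla g(\xb)^Tq^*,\,r^*)=0$. Then $r^*=0$, and it remains to show $q^*\in\Spanb{N_P(\bar d)}$.

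By Theorem \ref{ThNormalConePoly}, $N_D(G(\zb);\cdot)$ is a union of sets $K'^\circ\times K'$ with $K'=\F_1-\F_2$, where $\F_2\subseteq\F_1$ are faces of $K$. Every face contains $\lin K$, so $K'\supseteq\lin K\supseteq (N_P(\bar d))^\perp$. Therefore $K'^\circ\subseteq \Spanb{N_P(\bar d)}$. The first components of the union thus lie in this subspace, and so does their span.

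Hence \eqref{EqBasicAss_g} gives $q^*=0$, so \eqref{EqDirNonDegen1} holds. Lemma \ref{ThDirNonDegen} then yields directional metric subregularity, i.e.\ Assumption \ref{AssA2}, for every $\wb$ with $\nabla G(\zb)\wb\in T_D(G(\zb))$.

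The main obstacle is this last step. There one must check that the face-difference cones from Theorem \ref{ThNormalConePoly} have polars inside $\Spanb{N_P(\bar d)}$. The linearity-space argument above is the route I would take for that.
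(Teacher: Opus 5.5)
Your proposal is correct and follows essentially the same route as the paper. It uses the same boundedness and cluster-point argument for $\zb\in\Sigma_{\bar\omega}$, the inclusion $\Spanb{T_D(G(\zb))}\subseteq K^+\times\Spanb{N_P(g(\xb))}$ via $K^\circ=N_P(g(\xb))+[\dba]$ for \eqref{EqRegB}, and the face-difference description of Theorem \ref{ThNormalConePoly} for \eqref{EqDirNonDegen1}. The only cosmetic differences are that you invoke the local inclusion $N_P(g(x_k))\subseteq N_P(g(\xb))$ instead of outer semicontinuity of $N_P$, and that you bound $(\F_1-\F_2)^\circ$ via $\lin{K}\supseteq N_P(g(\xb))^\perp$ rather than via $(\lin{K})^\circ=(K^\circ)^+$, which is the same computation.
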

\begin{proof} We first prove that $(\bar x, \dba)\in \Sigma_{\bar\omega}$.
Consider a sequence $\omega_k\setto{\Omega}\bar\omega=(\bar x, b(\bar x)^T\bar d^*)$. Since $\omega_k\in \Omega$, then there exists $(x_k,d^*_k)$ such that
$\omega_k=(x_k,b(x_k)^Td^*_k)$ and $d^*_k\in N_P(g(x_k))$, i.e., $(x_k, d^*_k)\in B^{-1}(\omega_k)\cap \Gamma$.
Since $\omega_k\to \bar\omega$, then $x_k\to \bar x$ readily follows.

Now it needs to be shown that $d^*_k\to \dba$. First we show by contradiction that the sequence $d_k^*$ is bounded.
Assuming that the sequence $\{d_k^*\}$ is unbounded, by possibly passing to a subsequence we can assume that $\lim_{k\to\infty}\norm{d_k^*}=\infty$ and $\tilde d_k^*:=d_k^*/\norm{d_k^*}$ converges to some $\tilde d^*$ with $\norm{\tilde d^*}=1$. Since
$d^*_k\in N_P(g(x_k))$, then $\tilde d^*_k\in N_P(g(x_k))$ and $\tilde d^*\in N_P(g(\bar x))\subseteq \big(N_P(g(\bar x))\big)^+$, where we use the outer semicontinuity of the normal cone by \cite[Proposition 6.6]{RoWe98}.
Because $\omega_k=(x_k,b(x_k)^Td^*_k)$ is convergent, then $b(x_k)^Td^*_k$ is bounded, and hence
\[
b(\bar x)^T\tilde d^*=\lim_{k\to \infty}b(x_k)^T \tilde d^*_k =\lim_{k\to \infty} \frac{b(x_k)^T d^*_k}{\norm{d^*_k}}=0,
\]
contradicting \eqref{EqBasicAss_b}. Hence $d_k^*$ is bounded. Assuming that $d_k^*$ has a limit point $\tilde d^*$, then $\tilde d^*\in N_P(g(\bar x))$. Further, from $\omega_k\to\bar\omega$ we derive $b(\xb)^T\tilde d^*=b(\xb)^T\dba$. Hence $b(\xb)^T(\tilde d^*-\dba)=0$ and $\tilde d^*-\dba\in \big( N_P(g(\bar x))\big)^+$. This implies $\tilde d^*=\dba$ by \eqref{EqBasicAss_b}.
Thus $d_k^*\to \dba$ and $(x_k, d^*_k)\to (\bar x, \bar d^*)$, which proves
$(\bar x, \dba)\in \Sigma_{\bar\omega}$.

Now let us verify \eqref{EqRegB} which reads in our context as
\[\myvec{u\\b(\bar x)^Te^*+(\nabla b(\bar x)u)^T\dba}=0, \myvec{\nabla g(\bar x)u\\e^*}\in \Spanb{T_{\gph N_P}(g(\bar x),\dba)}\ \Rightarrow\ \myvec{u\\e^*}=0,\]
or equivalently rewritten as
\begin{equation}\label{EqAuxRegB}b(\bar x)^Te^*=0, \ \ \myvec{0\\e^*}\in \Spanb{T_{\gph N_P}(g(\bar x),\dba)}\ \Rightarrow\ e^*=0.\end{equation}
Using \eqref{equal-a-1} and the fact that $\K_P{(g(\bar x),\bar d^*)}$ is a convex cone, we have
\begin{align}
T_{\gph N_P}(g(\bar x),\dba)&=\gph N_{\K_P{(g(\bar x),\bar d^*)}} =\{(u,v)\,|\, u\in \K_P{(g(\bar x),\bar d^*)}, v\in N_{\K_P{(g(\bar x),\bar d^*)}}(u)\} \nonumber\\
&= \{(u,v)\,|\, u\in \K_P{(g(\bar x),\bar d^*)}, v\in \K_P{(g(\bar x),\bar d^*)}^\circ\cap [u]^\perp\} \nonumber\\
& \subseteq \K_P{(g(\bar x),\bar d^*)}\times \K_P{(g(\bar x),\bar d^*)}^\circ.  \label{tangent-perp}
\end{align}
Further since $\K_P{(g(\bar x),\bar d^*)}$ is also polyhedral, then
\begin{align}
\big(\K_P{(g(\bar x),\bar d^*)}^\circ\big)^+&=\K_P{(g(\bar x),\bar d^*)}^\circ-\K_P{(g(\bar x),\bar d^*)}^\circ=\big(T_P(g(\bar x))^\circ+[\bar d^*]\big)-\big(T_P(g(\bar x))^\circ+[\bar d^*]\big) \nonumber\\
&= \big(N_P(g(\bar x)) \big)^++[\bar d^*]=\big(N_P(g(\bar x)) \big)^+, \label{kp-perp}
\end{align}
where the last step is due to $\bar d^*\in N_P(g(\bar x))$ by assumption. Combining \eqref{tangent-perp} and \eqref{kp-perp} together yields
\[
\big(T_{\gph N_P}(g(\bar x),\dba)\big)^+\subseteq \K_P{(g(\bar x),\bar d^*)}^+\times \big(\K_P{(g(\bar x),\bar d^*)}^\circ\big)^+
\subseteq \K_P{(g(\bar x),\bar d^*)}^+\times \big(N_P(g(\bar x)) \big)^+.
\]
Hence \eqref{EqBasicAss_b} implies \eqref{EqAuxRegB} and the first assertion is verified.

In order to show the second assertion, we will prove that condition \eqref{EqDirNonDegen1} is fulfilled which implies in turn Assumption \ref{AssA2} by Lemma \ref{ThDirNonDegen}.
Denoting $\wb:=(u,e^*)$ then condition $\nabla G(\zb)\wb\in T_D(G(\zb))$ reads as
\[\myvec{\nabla g(\bar x)u\\ e^*}\in T_{\gph N_P}(g(\bar x),\dba)\]
and, denoting $p^*:=(d^*,e)$, \eqref{EqDirNonDegen1} amounts to
\begin{equation}\label{EqAuxDirNondegen}
\myvec{\nabla g(\bar x)^Td^*\\ e}=0,\ \myvec{d^*\\e}\in \Spanb{N_{\gph N_P}\big((g(\bar x),\dba);(\nabla g(\bar x)u,e^*)\big)}\ \Rightarrow\ \myvec{d^*\\ e}=0.
\end{equation}
The requirement $e=0$ is obviously fulfilled, and it remains to show $d^*=0$.
According to Theorem \ref{ThNormalConePoly},
\begin{align}
&N_{\gph N_P}\big((g(\bar x),\dba);(\nabla g(\bar x)u,e^*)\big) \label{Normal-Faces}\\
&=\bigcup\limits_{}\big\{(\F_1-\F_2)^\circ\times (\F_1-\F_2)\,|\, \F_1, \F_2\in \F(\K_P(g(\bar x),\dba)), \ \ \nabla g(\bar x)u\in \F_2\subseteq\F_1\subseteq [e^*]^\perp\big\}. \nonumber
\end{align}
Take $\F_1, \F_2\in \F(\K_P(g(\bar x),\dba))$ with $\F_2\subseteq\F_1$. Since $\F_1,\F_2$ are faces of the critical cone $\K_P(g(\bar x),\bar d^*)$, we have $\lin{\K_P(g(\bar x),\bar d^*)}\subseteq \F_2\subseteq \F_1\subseteq \K_P(g(\bar x),\bar d^*)$, implying
\begin{equation}\label{F1-F2-1}
\lin{\KPg}\subseteq\F_1-\F_2\subseteq \K_P(g(\bar x),\bar d^*)-\K_P(g(\bar x),\bar d^*)=\Spanb{\K_P(g(\bar x),\bar d^*)},
\end{equation}
and consequently
\begin{align}
(\F_1-\F_2)^\circ & \subseteq \big(\lin{\K_P(g(\bar x),\bar d^*)}\big)^\circ =
\big(\K_P(g(\bar x),\bar d^*)\cap \big(-\K_P(g(\bar x),\bar d^*)\big) \big)^\circ \nonumber\\
& =\K_P(g(\bar x),\bar d^*)^\circ+\big(-\K_P(g(\bar x),\bar d^*)\big)^\circ
 =\K_P(g(\bar x),\bar d^*)^\circ-\K_P(g(\bar x),\bar d^*)^\circ \nonumber\\
 &= \big(\K_P(g(\bar x),\bar d^*)^\circ\big)^+=\big(N_P(g(\bar x)) \big)^+, \label{F1-F2-2}
 \end{align}
 where the second equality comes from the fact that $\K_P(g(\bar x),\bar d^*)$ is a convex polyhedral cone and
 the last equality follows from \eqref{kp-perp}. Combining \eqref{Normal-Faces}, \eqref{F1-F2-1} and \eqref{F1-F2-2} together yields
 \begin{equation}\label{normal-space}
 \Spanb{N_{\gph N_P}\big((g(\bar x),\dba);(\nabla g(\bar x)u,e^*)\big)}\subseteq \big(N_P(g(\bar x)) \big)^+\times \Spanb{\K_P(g(\bar x),\bar d^*)}.
 \end{equation}
 Therefore \eqref{EqAuxDirNondegen} is implied by \eqref{EqBasicAss_g}.
 \end{proof}

Combining all the material provided in the previous sections we obtain the following main result.
\begin{theorem}\label{Thm 4.4}
Let $\bar x\in g^{-1}(P)$ and $\dba\in N_P(g(\bar x))$ be given and assume that conditions \eqref{EqBasicAss} are fulfilled.
For $\bar\omega:=(\bar x,b(\bar x)^T\dba)$, the following statements hold true.
\begin{enumerate}
 \item[\rm (i)]  $T_\Omega(\bar\omega)=\Big\{\myvec{u\\ (\nabla b(\bar x)u)^T\dba+ b(\bar x)^Te^*} \, \left|\, \myvec{\nabla g(\bar x)u\\e^*}\in T_{{\rm gph}N_P}(g(\bar x),\bar d^*)\right.\Big\}$.
 \item[\rm (ii)] Let $\eta\in T_\Omega(\bar\omega)$ and let $(u,e^*)$ denote the element fulfilling
 \begin{equation}\label{ue}
 \eta=(u,(\nabla b(\bar x)u)^T\dba+ b(\bar x)^Te^*)  \ \ {\rm and} \ \   (\nabla g(\bar x)u,e^*)\in T_{{\rm gph}N_P}(g(\bar x),\bar d^*).
 \end{equation}
 Then
 \begin{align}\label{EqDirLimNormalOmega}\lefteqn{N_\Omega(\bar\omega;\eta)}\\
 \nonumber&=\Big\{\myvec{-\nabla (b^T\dba)(\bar x)^T r+\nabla g(\bar x)^T f^*\\r}
 \,\left|\, \myvec{f^*\\ b(\bar x)r}\in N_{\gph N_P}\big((g(\bar x),\dba);(\nabla g(\bar x)u,e^*)\big)\right.\Big\},\\
 \label{EqSecOrdTanOmega}&T^2_\Omega(\bar \omega;\eta)\\
 &=\Big\{\left.\myvec{\zeta \\ 2(\nabla b(\bar x)u)^Te^*+\big( \nabla b(\bar x)\zeta+ \nabla^2 b(\bar x)(u,u)\big)^T\dba+b(\bar x)^T\xi^*} \, \right|\, \nonumber \\
 \nonumber&\qquad\qquad\qquad\qquad\myvec{\nabla g(\bar x)\zeta+\nabla^2 g(\bar x)(u,u)\\ \xi^*}\in T^2_{\gph N_P}\big((g(\bar x),\dba);(\nabla g(\xb)u,e^*)\big)\Big\}.
 \end{align}
 \item[\rm (iii)]  Let $\eta\in T_\Omega(\bar \omega)$ and let $(u,e^*)$ satisfy \eqref{ue}. Then
 \begin{align}
 \label{EqDomSigmaOmega1}&\lefteqn{\dom\sigma_{T^2_\Omega(\bar\omega;\eta)}=\widehat N^p_\Omega(\bar\omega;\eta)}\\
 \nonumber&=\Big\{\myvec{-\nabla (b^T\dba)(\bar x)^Tr+\nabla g(\bar x)^Tf^*\\r} \, \left|\,
 \myvec{f^*\\ b(\bar x)r}\in \widehat N_{T_{{\rm gph}N_P}(g(\bar x),\dba)}(\nabla g(\bar x)u, e^*)\right. \Big\},\\
 \label{EqDomHatSigmaOmega1}&\dom\hat\sigma_{T^2_\Omega(\bar\omega;\eta)}=N_\Omega(\bar\omega;\eta).
 \end{align}
 \item[\rm (iv)] Let $\eta\in T_\Omega(\bar \omega)$ and let $(u,e^*)$ satisfy \eqref{ue}. For every $\omega^*\in \dom\sigma_{T^2_\Omega(\bar\omega;\eta)}$ (respectively $\omega^*\in\dom\hat\sigma_{T^2_\Omega(\bar\omega;\eta)}$), let $(r,f^*)$ denote the elements fulfilling
 \begin{equation}\label{w-star}
   \omega^*=\myvec{-\nabla (b^T\dba)(\bar x)^Tr+\nabla g(\bar x)^Tf^*\\r}
   \end{equation}
   and
   \begin{equation}\label{f-r-region}
   \myvec{f^*\\ b(\bar x)r}\in \widehat
   N_{T_{{\rm gph}N_P}(g(\bar x),\dba)}(\nabla g(\bar x)u, e^*)
      \ \big(\mbox{respectively }N_{\gph N_P}\big((g(\bar x),\dba);(\nabla g(\bar x)u,e^*)\big)\big).
 \end{equation}
 Then
 \[ \sigma_{T^2_\Omega(\bar\omega;\eta)}(\omega^*)(\mbox{resp. }\hat\sigma_{T^2_\Omega(\bar\omega;\eta)}(\omega^*))=\skalp{r, 2(\nabla b(\bar x)u)^Te^*+\nabla^2 b(\bar x)(u,u)^T\dba}-\skalp{f^*,\nabla^2g(\bar x)(u,u)},
 \]
 ${\rm d^2\,}\delta_\Omega(\bar \omega;\omega^*)(\eta)=-\sigma_{T^2_\Omega(\bar\omega;\eta)}(\omega^*)$.
\end{enumerate}
\end{theorem}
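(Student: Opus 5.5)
The plan is to specialize the abstract results of Section 3 to $z=(x,\eta)$, $B(x,\eta)=(x,b(x)^T\eta)$, $G(x,\eta)=(g(x),\eta)$, $D=\gph N_P$, with $\zb=(\xb,\dba)$ and $\bar\omega=B(\zb)$. By Proposition \ref{GP-condition-1}, \eqref{EqBasicAss_b} yields Assumption \ref{AssA1}, so Proposition \ref{PropHomeo}, Theorems \ref{ThTanConeAuxProbl} and \ref{omega-noral}, the curvature theorem and Corollary \ref{cor-dom-normal} apply at $\omega=\bar\omega$ with $\hat z(\bar\omega)=\zb$. Also, \eqref{EqBasicAss_g} yields the directional nondegeneracy \eqref{EqDirNonDegen1} for every $\wb\in L_\Gamma(\zb)$. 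Hence Lemma \ref{ThDirNonDegen}, Lemma \ref{ThSOVAGamma} and Corollary \ref{CorDirNonDegen} are available for $\Gamma$ at $\zb$ in any direction $\wb$ with $\nabla G(\zb)\wb\in T_D(G(\zb))$.

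Next I record the derivatives. For $w=(u,e^*)$ we have $\nabla B(\zb)w=(u,(\nabla b(\xb)u)^T\dba+b(\xb)^Te^*)$ and $\nabla G(\zb)w=(\nabla g(\xb)u,e^*)$. The second derivatives are $\nabla^2B(\zb)(w,w)=(0,\,2(\nabla b(\xb)u)^Te^*+\nabla^2 b(\xb)(u,u)^T\dba)$ and $\nabla^2G(\zb)(w,w)=(\nabla^2 g(\xb)(u,u),0)$. The adjoints are $\nabla B(\zb)^T(\omega_1^*,r)=(\omega_1^*+\nabla(b^T\dba)(\xb)^Tr,\ b(\xb)r)$ and $\nabla G(\zb)^T(f^*,e)=(\nabla g(\xb)^Tf^*,e)$.

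For (i), I combine \eqref{EqTanAuxProbl} with $T_\Gamma(\zb)=L_\Gamma(\zb)$, which follows from metric subregularity (Lemma \ref{ThDirNonDegen} gives $\wb\in T_\Gamma(\zb)$ for every $\wb\in L_\Gamma(\zb)$). Given \eqref{ue}, the element $(u,e^*)$ is unique by \eqref{EqRegBnb}, since the difference of two such elements lies in $L_{\zb}$ and is annihilated by $\nabla B(\zb)$. Hence $w=\hat z'(\bar\omega;\eta)=(u,e^*)$. For (ii), I use \eqref{EqLimNormalAuxProbl} together with \eqref{relation-normal}. Then $\nabla B(\zb)^T\omega^*\in\nabla G(\zb)^TN_D(G(\zb);\nabla G(\zb)w)$ means $\omega_1^*+\nabla(b^T\dba)(\xb)^Tr=\nabla g(\xb)^Tf^*$ and $b(\xb)r=e$ for some $(f^*,e)$ in the directional normal cone, which is exactly \eqref{EqDirLimNormalOmega}. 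Similarly, \eqref{EqSecOrdTanAuxProbl} combined with \eqref{EqSecOrdTanGamma} gives \eqref{EqSecOrdTanOmega} after writing $v=(\zeta,\xi^*)$.

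For (iii), Corollary \ref{cor-dom-normal} together with Corollary \ref{CorDirNonDegen}(ii) gives $\dom\sigma_{T^2_\Omega}=\widehat N^p_\Omega(\bar\omega;\eta)$. Indeed, each equals the preimage under $\nabla B(\zb)^T$ of $\nabla G(\zb)^T\widehat N_{T_D(G(\zb))}(\nabla G(\zb)w)$, and unfolding this preimage yields the stated formula. Corollary \ref{CorDirNonDegen}(i) gives $\dom\hat\sigma=\{\omega^*\mid\nabla B(\zb)^T\omega^*\in N_\Gamma(\zb;w)\}$, which equals $N_\Omega(\bar\omega;\eta)$ by \eqref{EqLimNormalAuxProbl}. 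For (iv), I apply \eqref{EqSuppOmega} and \eqref{EqLowerSuppOmega}, then use Corollary \ref{CorDirNonDegen} to evaluate $\sigma_{T^2_\Gamma}(z^*)$ (resp.\ $\hat\sigma$) as $-\skalp{p_0^*,\nabla^2G(\zb)(w,w)}=-\skalp{f^*,\nabla^2g(\xb)(u,u)}$, where $p_0^*=(f^*,b(\xb)r)$. Adding $\skalp{\omega^*,\nabla^2B(\zb)(w,w)}$ gives the formula. The $d^2\delta$ identity then follows from \eqref{EqSecOrdSubDerOmega} and Lemma \ref{ThSOVAGamma}. This requires $\nabla B(\zb)^T\omega^*\in[w]^\perp$, which I check from $\widehat N^p\subseteq[\eta]^\perp$ and $\nabla B(\zb)w=\eta$.

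The main obstacle is bookkeeping rather than depth. The representing pair $(r,f^*)$ must be unique, and it must coincide with the singleton multiplier $p_0^*$ of Corollary \ref{CorDirNonDegen}. Here $r$ is fixed as the second component of $\omega^*$, and $f^*$ is fixed because the multiplier is a singleton: $\nabla g(\xb)^Tf^*$ is determined, and \eqref{EqBasicAss_g} together with \eqref{normal-space} forces uniqueness of $f^*$. A second point to check is that $N_D(G(\zb);\cdot)$ is used consistently with $N_{T_D(G(\zb))}$ via \eqref{relation-normal}.
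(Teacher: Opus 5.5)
Your proposal is correct and follows essentially the same route as the paper. You specialize Proposition \ref{PropHomeo}, Theorems \ref{ThTanConeAuxProbl} and \ref{omega-noral}, the curvature formulas and Corollary \ref{cor-dom-normal} at $\hat z(\bar\omega)=\zb$, invoke Lemma \ref{ThDirNonDegen} and Corollary \ref{CorDirNonDegen} via Proposition \ref{GP-condition-1}, and identify $p_0^*=(f^*,b(\xb)r)$ through the singleton property; your small deviations are both valid (identifying $\hat z'(\bar\omega;\eta)=(u,e^*)$ via uniqueness under \eqref{EqRegB} rather than \eqref{EqAuxRegB}, and deriving the ${\rm d^2}\delta_\Omega$ identity from Lemma \ref{ThSOVAGamma} after checking $\nabla B(\zb)^T\omega^*\in[w]^\perp$ rather than from Corollary \ref{CorDirNonDegen}(ii)).
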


\begin{proof} (i) Let $\zb:=(\bar x, \dba)$. Since $B(\bar z)=\bar \omega$, we get
\begin{equation}\label{z-hat}
 \hat z(\bar \omega)=\bar z
 \end{equation}
under the condition \eqref{EqBasicAss}. According to Lemma \ref{ThDirNonDegen} and \eqref{EqTanAuxProbl}, we know that
\begin{equation}\label{tangent-two-part}
T_\Omega(\bar \omega)=\{\nabla B(\zb)w \, |\, w\in T_\Gamma(\bar z)\}=\{\nabla B(\zb)w \, |\, \nabla G(\bar z)w\in T_D(G(\bar z))\}.
\end{equation}
For $w:=(u,e^*)\in T_\Gamma(\zb)$, we have the characterization
\begin{equation}\label{G-part}
\nabla G(\zb)w=(\nabla g(\bar x)u,e^*)\in T_D(G(\zb))=T_{{\rm gph}N_P}(g(\bar x),\bar d^*).
\end{equation}
Since $\nabla B(\zb)w=(u, (\nabla b(\bar x)u)^T\dba+b(\bar x)^Te^*)$, the formula for $T_\Omega(\bar\omega)$ follows from
\eqref{tangent-two-part} and \eqref{G-part}.

(ii) Let $\eta\in T_\Omega(\bar \omega)$. The existence of $(u,e^*)$ satisfying \eqref{ue} is ensured by the formula for $T_\Omega(\bar \omega)$ established in (i). Since $\hat z'(\bar\omega;\eta)=\hat w_{\bar z}(\eta,0)$ by Theorem \ref{ThTanConeAuxProbl}, then
\[\nabla B(\bar z)\hat z'(\bar\omega;\eta)=\eta  \ \ \ {\rm and } \ \ \ \nabla G(\bar z)\hat z'(\bar\omega;\eta)\in \big(T_D(G(\bar z))\big)^+
=\big(T_{{\rm gph}N_P}(g(\bar x),\bar d^*)\big)^+,
\]
 by the definition of $\hat w_{\bar z}$ given in Lemma \ref{LemPsiTheta}.
 Let $\hat z'(\bar\omega;\eta):=(a,a^*)$. Then
 $\eta=\nabla B(\bar z)\hat z'(\bar\omega;\eta)=(a,(\nabla b(\bar x)a)^T\dba+ b(\bar x)^T a^*)$ and
 $\nabla G(\bar z)\hat z'(\bar\omega;\eta)=(\nabla g(\bar x)a,a^*)$. Comparing this with \eqref{ue}, we obtain
 $a=u$, $b(\bar x)^T(a^*-e^*)=0$ and $(0,a^*-e^*)\in \big(T_{{\rm gph}N_P}(g(\bar x),\bar d^*)\big)^+$, which further implies
 $a^*=e^*$ by \eqref{EqAuxRegB}. Hence
\begin{equation}\label{z-u-e}
\hat z'(\bar\omega;\eta)=(u,e^*)=:w.
\end{equation}

 Note that $N_\Omega(\bar\omega;\eta)=\{\omega^*\,|\, \nabla B(\bar z)^T\omega^*\in N_\Gamma(\bar z;w)\}$ by \eqref{EqLimNormalAuxProbl} and $N_\Gamma(\bar z;w)=\nabla G(\bar z)^TN_D(G(\bar z);\nabla G(\bar z)w)$ by Lemma \ref{ThDirNonDegen}. Hence
 \begin{equation}\label{normal-two-part}
 \omega^*\in N_\Omega(\bar\omega;\eta)\ \Longleftrightarrow\
 \nabla B(\bar z)^T\omega^*=\nabla G(\bar z)^Tq^* \  {\rm for \ some} \ q^*\in N_D(G(\bar z);\nabla G(\bar z)w).
 \end{equation}
 Denote $\omega^*:=(r^*,r)$ and $q^*:=(f^*,f)$. Since
 \[\nabla B(\zb)^T\omega^*=\big(r^*+\nabla(b^T\dba)(\bar x)^Tr, b(\bar x)r\big), \ \ \nabla G(\zb)^Tq^*=(\nabla g(\bar x)^Tf^*, f), \ \ \nabla G(\bar z)w=(\nabla g(\bar x)u,e^*),\]
 then \eqref{EqDirLimNormalOmega} follows from \eqref{normal-two-part}.

 According to \cite[Corollary 1]{BeGfrOut19}, since $T^2_D(G(\bar z);\nabla G(\bar z)w)$ is nonempty, we know that $T^2_\Gamma(\bar z;w)$ is also nonempty under Assumption \ref{AssA2}, which is ensured by condition \eqref{EqBasicAss} via
 Proposition \ref{GP-condition-1}. Hence $T^2_\Omega(\bar \omega;\eta)$ is nonempty by \eqref{EqSecOrdTanSetAuxProbl}.
Further combining with \eqref{EqSecOrdTanGamma}, \eqref{z-hat} and \eqref{z-u-e} yields
  \begin{equation}\label{second-tangent-twopart}
  T^2_\Omega(\bar \omega;\eta)=\big\{\nabla B(\bar z)v+\nabla^2B(\bar z)(w,w)\,|\, \nabla G(\bar z)v+\nabla^2G(\bar z)(w,w)\in T^2_D(G(\bar z);\nabla G(\bar z)w)\big\}.
  \end{equation}
  Denote $v:=(\zeta,\xi^*)$. Taking into account
  \[
\nabla B(\zb)v=\myvec{\zeta \\ \big(\nabla b(\bar x)\zeta\big)^T\dba+b(\bar x)^T\xi^*},\ \ \ \nabla G(\zb)v=\myvec{\nabla g(\bar x)\zeta\\\xi^*},
\]
and
  \begin{align}\label{second-derivative}
  \nabla^2B(\zb)(w,w)=\myvec{0\\2(\nabla b(\bar x)u)^Te^*+\nabla ^2 b(\bar x)(u,u)^T\dba},\ \ \nabla^2G(\zb)(w,w)=\myvec{\nabla^2g(\bar x)(u,u)\\0},
  \end{align}
   \eqref{EqSecOrdTanOmega} follows from \eqref{second-tangent-twopart}.

(iii)  Combining Corollary \ref{cor-dom-normal}, \eqref{z-hat}, \eqref{z-u-e} with Corollary \ref{CorDirNonDegen} yields
\[
\dom \sigma_{T^2_\Omega(\bar\omega;\eta)}=\left\{\omega^*\,|\, \nabla B(\zb)^T\omega^*\in \nabla G(\zb)^T\widehat N_{T_{\gph N_P}(G(\zb))}(\nabla G(\zb)w)\right\}.
\]
Since $\dom \sigma_{T^2_\Gamma(\zb; w)}=\widehat N_\Gamma^p(\zb, w)$ by Corollary \ref{CorDirNonDegen}, then
$\dom\sigma_{T^2_\Omega(\bar\omega;\eta)}=\widehat N^p_\Omega(\bar\omega;\eta)$ by Corollary \ref{cor-dom-normal}.
The formula \eqref{EqDomSigmaOmega1} can be obtained by  restricting that the range of $q^*$ in \eqref{normal-two-part} belongs to the set $\widehat
   N_{T_{{\rm gph}N_P}(g(\bar x),\dba)}(\nabla g(\bar x)u, e^*)$ and then following the argument as in the proof of \eqref{EqDirLimNormalOmega}.

It follows from \eqref{EqLimNormalAuxProbl} and Corollaries \ref{CorDirNonDegen} and \ref{cor-dom-normal} that
\[
\dom\hat\sigma_{T^2_\Omega(\bar\omega;\eta)}=\{\omega^*\, |\, \nabla B(\bar z)^T\omega^*\in \dom \hat\sigma_{T^2_\Gamma(z:w)}\}
= \{\omega^*\, |\, \nabla B(\bar z)^T\omega^*\in N_\Gamma(\bar z; w)\}=N_\Omega(\bar \omega;\eta),
\]
and hence \eqref{EqDomHatSigmaOmega1} holds.

(iv)  Using \eqref{z-hat} and \eqref{z-u-e}, according to \eqref{EqSuppOmega} and Corollary \ref{CorDirNonDegen}(ii), we know
\begin{align}
\sigma_{T^2_\Omega(\bar\omega;\eta)}(\omega^*)&=\sigma_{T^2_\Gamma(\bar z;w)}(\nabla B(\bar z)^T\omega^*)+\skalp{\omega^*,\nabla^2 B(\bar z)(w,w)} \nonumber \\
&= -\langle p^*_0, \nabla^2 G(\bar z)(w,w)\rangle+\skalp{\omega^*,\nabla^2 B(\bar z)(w,w)},\label{p-1}
\end{align}
where $p^*_0\in \Lambda^s_{\nabla B(\bar z)^T\omega^*}(\bar z; w)$, i.e., $p^*_0\in \widehat N_{T_D(G(\bar z))}(\nabla G(\bar z)w)$ and
\begin{equation}\label{B-G-1}
\nabla B(\bar z)^T\omega^*=\nabla G(\bar z)^Tp^*_0.
\end{equation}
Denoting $r^*:=-\nabla (b^T\dba)(\bar x)^Tr+\nabla g(\bar x)^Tf^*$, then
$\omega^*=(r^*,r)$ by \eqref{w-star}. So
\begin{equation}\label{B-G-2}
\nabla B(\bar z)^T\omega^*=\myvec{\nabla g(\bar x)^T f^*\\ b(\bar x)r}=\nabla G(\bar z)^T\myvec{f^*\\ b(\bar x)r}.
\end{equation}
Comparing \eqref{B-G-1} with \eqref{B-G-2} yields $p^*_0=(f^*, b(\bar x)r)$, since the set $\Lambda^s_{\nabla B(\bar z)^T\omega^*}(\bar z; w)$ is a singleton by Corollary \ref{CorDirNonDegen}(ii). Substituting $p^*_0$ by $(f^*, b(\bar x)r)$ into \eqref{p-1} and using \eqref{second-derivative} yields the formula
for $\sigma_{T^2_\Omega(\bar\omega;\eta)}(\omega^*)$.  The formula for $\hat\sigma_{T^2_\Omega(\bar\omega;\eta)}(\omega^*)$
follows from \eqref{EqLowerSuppOmega} and Corollary \ref{CorDirNonDegen}(i) by restricting $p^*_0$ in $N_D(G(\bar z);\nabla G(\bar z)w)= N_{\gph N_P}\big((g(\bar x),\dba);(\nabla g(\bar x)u,e^*)\big)$ as required in \eqref{f-r-region}.
 Finally the formula for ${\rm d^2\,}\delta_\Omega(\bar \omega;\omega^*)(\eta)$ comes from \eqref{EqSecOrdSubDerOmega},
 \eqref{EqSuppOmega} and Corollary \ref{CorDirNonDegen}(ii).
\end{proof}

\section{Second-order optimality conditions for GEPs}

Since the optimization problem with generalized equation constraints (GEP) can be rewritten as the constrained problem GP,
we can employ the expressions established in Section 4 to derive the second-order optimality conditions for GEP.

\begin{theorem}[Second-order optimality conditions for GEPs]\label{second-order-appl-1}
Let  $\bar{x}$ be a feasible point of problem GEP. Assume that
 the condition \eqref{EqBasicAss} holds at $\bar x$. Then there exists a unique $\eta^* \in N_P(g(\bar x))$ such that $0=F(\bar x) +b(\bar x)^T\eta^*$.
Let a direction $d$ satisfy
\begin{equation}\label{second-1}
\nabla f(\bar x)d \leq 0,\ \  0=\nabla F(\bar x)d+(\nabla b(\bar x)d)^T\eta^*  +b(\bar x)^T e^*,
\end{equation}
where $(\nabla g(\bar x)d, e^*)\in T_{{\rm gph}N_P}(g(\bar x),\eta^*)$.
\begin{itemize}
\item[{\rm (i)}] Let $\bar x$ be a local optimal solution of GEP. Suppose that
\begin{equation}\label{nnamcq-2}
\left. \begin{array}{c}
\nabla F(\bar x)^T\lambda+\nabla \big(b^T\eta^*\big)(\bar x)^T\lambda-\nabla g(\bar x)^T\tau^*=0, \\
 (\tau^*,b(\bar x)\lambda)\in N_{{\rm gph}N_P}((g(\bar x),\eta^*);(\nabla g(\bar x)d,e^*))
\end{array}
 \right\} \Longrightarrow \lambda=0.
\end{equation}
Then there exist multipliers  $\lambda, \tau^*$ such that
   the first-order condition
   \begin{equation}\label{second-3}
     \nabla f(\bar x)-\nabla F(\bar x)^T\lambda-\nabla \big(b^T\eta^*\big)(\bar x)^T\lambda+\nabla g(\bar x)^T\tau^*=0,\end{equation}
     and the second-order condition
    \begin{align}\label{second-4}
   &\nabla^2 f(\bar x)(d,d) -\langle \lambda, (\nabla^2F(\bar x)(d,d))+\nabla^2 b(\bar x)(d,d)^T\eta^*+ 2(\nabla b(\bar x)d)^Te^*\rangle \nonumber\\
   &+\skalp{\tau^*,\nabla^2g(\bar x)(d,d)}\geq 0,
    \end{align}
    where $(\tau^*,b(\bar x)\lambda)\in N_{{\rm gph}N_P}((g(\bar x),\eta^*);(\nabla g(\bar x)d,e^*))$.
\item[\rm (ii)] Suppose that for every nonzero $d$ satisfying \eqref{second-1}, there is $(\alpha,\lambda,\tau^*)$ not all  equal to zero satisfying $\alpha\geq 0$ and
    \[
\left\{
\begin{array}{l}
   \alpha\nabla f(\bar x)-\nabla F(\bar x)^T\lambda-\nabla \big(b^T\eta^*\big)(\bar x)^T\lambda+\nabla g(\bar x)^T\tau^*=0, \\
(\tau^*,b(\bar x)\lambda)\in  \widehat N_{T_{{\rm gph}N_P}(g(\bar x), \eta^*)}(\nabla g(\bar x)d, e^*)
\end{array}\right.
\]
   and
    \begin{align}\label{second-5}
   & \alpha\nabla^2 f(\bar x)(d,d) -\langle \lambda, (\nabla^2F(\bar x)(d,d))+\nabla^2 b(\bar x)(d,d)^T\eta^*+ 2(\nabla b(\bar x)d)^Te^*\rangle \nonumber\\
   &+\skalp{\tau^*,\nabla^2g(\bar x)(d,d)}> 0,
    \end{align}
    then $\bar x$ is an essential local minimizer of second order for GEP.
\end{itemize}
\end{theorem}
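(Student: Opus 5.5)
The plan is to rewrite GEP as GP with $\varphi(x)=(x,-F(x))$ and $\Omega$ from \eqref{EqOmega-1}, and then to apply Proposition \ref{Prop2.17}, using Theorem \ref{Thm 4.4} to make every ingredient explicit.

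\textbf{Existence and uniqueness of $\eta^*$.} Feasibility of $\bar x$ gives some $\eta^*\in N_P(g(\bar x))$ with $0=F(\bar x)+b(\bar x)^T\eta^*$. If $\eta_1^*,\eta_2^*$ are two such elements, then $b(\bar x)^T(\eta_1^*-\eta_2^*)=0$ and $\eta_1^*-\eta_2^*\in\Spanb{N_P(g(\bar x))}$. Hence \eqref{EqBasicAss_b} forces $\eta_1^*=\eta_2^*$.

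\textbf{Setting up the reference point and critical cone.} Put $\bar d^*:=\eta^*$ and $\bar\omega:=\varphi(\bar x)=(\bar x,b(\bar x)^T\eta^*)$. Theorem \ref{Thm 4.4} applies at $\bar\omega$. Since $\nabla\varphi(\bar x)d=(d,-\nabla F(\bar x)d)$, Theorem \ref{Thm 4.4}(i) shows that $\nabla\varphi(\bar x)d\in T_\Omega(\bar\omega)$ exactly when $u=d$ and some $e^*$ satisfies \eqref{second-1}. So $C(\bar x)$ is precisely the set of $d$ satisfying \eqref{second-1}, and $e^*$ is unique by \eqref{EqAuxRegB}.

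\textbf{First-order multiplier.} Write $\lambda_\Omega=(r^*,r)$ for a multiplier in $N_\Omega(\bar\omega;\nabla\varphi(\bar x)d)$ and set $r=-\lambda$, $f^*=\tau^*$. By \eqref{EqDirLimNormalOmega}, such a multiplier has the form $r^*=-\nabla(b^T\eta^*)(\bar x)^Tr+\nabla g(\bar x)^Tf^*$ with $(f^*,b(\bar x)r)$ in the directional normal cone. Note that the sign conventions of the statement must be matched carefully here, since only $(\tau^*,b(\bar x)\lambda)$ appears in the statement and the cone is not symmetric in general. The stationarity condition $\nabla f(\bar x)+\nabla\varphi(\bar x)^T\lambda_\Omega=\nabla f(\bar x)+r^*-\nabla F(\bar x)^Tr=0$ then becomes \eqref{second-3} after the appropriate relabeling.

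\textbf{Second-order term.} We have $\nabla^2\varphi(\bar x)(d,d)=(0,-\nabla^2F(\bar x)(d,d))$. Theorem \ref{Thm 4.4}(iv) gives $\hat\sigma$ (respectively $\sigma$ and $-d^2\delta_\Omega$) as
\[
\skalp{r,2(\nabla b(\bar x)d)^Te^*+\nabla^2b(\bar x)(d,d)^T\eta^*}-\skalp{f^*,\nabla^2g(\bar x)(d,d)}.
\]
Inserting this into \eqref{add-second-thereom-1} produces \eqref{second-4}, and inserting it into \eqref{second-general} produces \eqref{second-5}.

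\textbf{Verifying the hypotheses of Proposition \ref{Prop2.17}(i).} There are two requirements.
\begin{itemize}
\item[(a)] Metric subregularity of $x\tto\varphi(x)-\Omega$ in direction $d$. This is the main step. The plan is to derive it from the directional nondegeneracy (or directional NNAMCQ-type) condition \eqref{nnamcq-2}, via the standard sufficient condition for directional metric subregularity (\cite{Gfr13a,BM25}): no nonzero $\lambda_\Omega\in N_\Omega(\bar\omega;\nabla\varphi(\bar x)d)$ satisfies $\nabla\varphi(\bar x)^T\lambda_\Omega=0$. Translating through \eqref{EqDirLimNormalOmega}, such a $\lambda_\Omega$ satisfies exactly the premise of \eqref{nnamcq-2}, so $\lambda=0$, i.e. $r=0$. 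Then $r^*=\nabla g(\bar x)^T\tau^*$, and stationarity gives $r^*=-\nabla\varphi$-part $=0$. Hence $\nabla g(\bar x)^T\tau^*=0$. By \eqref{normal-space}, $\tau^*\in\Spanb{N_P(g(\bar x))}$, so $\tau^*=0$ by \eqref{EqBasicAss_g}.
\item[(b)] Nonemptiness of $T^2_\Omega$. This was established in the proof of Theorem \ref{Thm 4.4}(ii).
\end{itemize}

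\textbf{Sufficiency part.} For (ii), the multiplier lies in $\widehat N^p_\Omega(\bar\omega;\nabla\varphi(\bar x)d)$, which is described by \eqref{EqDomSigmaOmega1}. On this set $d^2\delta_\Omega(\bar\omega;\lambda_\Omega)(\nabla\varphi(\bar x)d)=-\sigma_{T^2_\Omega}(\lambda_\Omega)$ is given by the explicit formula above, so Proposition \ref{Prop2.17}(iii) yields (ii).
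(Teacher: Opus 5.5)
Your route is the paper's route: rewrite GEP as GP with $\varphi(x)=(x,-F(x))$, read off $C(\xb)$, $N_\Omega$, $\widehat N^p_\Omega$ and the curvature terms from Theorem~\ref{Thm 4.4}, obtain directional metric subregularity from the first-order sufficient condition, and apply Proposition~\ref{Prop2.17}. The problem is the multiplier identification you actually write down, $r=-\lambda$, which is wrong.

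The minus sign is already built into $\varphi$, since $\nabla\varphi(\xb)^T(r^*,r)=r^*-\nabla F(\xb)^Tr$ and $\skalp{(r^*,r),\nabla^2\varphi(\xb)(d,d)}=-\skalp{r,\nabla^2F(\xb)(d,d)}$. Take $r^*=-\nabla(b^T\eta^*)(\xb)^Tr+\nabla g(\xb)^Tf^*$ from \eqref{EqDirLimNormalOmega} and choose $r=\lambda$, $f^*=\tau^*$; the paper simply writes $\omega^*=(\mu,\lambda)$. With this choice:
stationarity is exactly \eqref{second-3};
the condition $\nabla\varphi(\xb)^T\omega^*=0$, $\omega^*\in N_\Omega(\bar\omega;\nabla\varphi(\xb)d)$, is exactly the premise of \eqref{nnamcq-2};
$\nabla^2_{xx}L(d,d)-\hat\sigma(\omega^*)$ is exactly the left side of \eqref{second-4}.

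With $r=-\lambda$ you would instead get three mismatches. The stationarity equation would contain $+\nabla F(\xb)^T\lambda+\nabla(b^T\eta^*)(\xb)^T\lambda$. The whole $\lambda$-term in the second-order inequality would have the opposite sign. The cone condition would become $(\tau^*,-b(\xb)\lambda)\in N_{\gph N_P}(\cdot)$, which, as you note yourself, is not equivalent because the cone is not symmetric. So ``the appropriate relabeling'' must be $r=\lambda$, and with that fix part (i) goes through as you describe.

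In (ii) you also skip one hypothesis of Proposition~\ref{Prop2.17}(iii). The GP multiplier $(\alpha,\mu,\lambda)$, with $\mu=-\nabla(b^T\eta^*)(\xb)^T\lambda+\nabla g(\xb)^T\tau^*$, must be nonzero, but the statement only gives $(\alpha,\lambda,\tau^*)\neq0$. The critical case is $\alpha=0$, $\lambda=0$, $\tau^*\neq0$, where $\mu=\nabla g(\xb)^T\tau^*$. The paper rules out $\mu=0$ using the inclusion $\widehat N_{T_{\gph N_P}(g(\xb),\eta^*)}(\nabla g(\xb)d,e^*)\subseteq N_{\gph N_P}((g(\xb),\eta^*);(\nabla g(\xb)d,e^*))$ together with \eqref{normal-space} and \eqref{EqBasicAss_g}. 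That is the same argument you already use at the end of your step (a).

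Alternatively, once \eqref{second-general} has been identified with \eqref{second-5} through Theorem~\ref{Thm 4.4}(iv), strictness excludes $(\alpha,\mu,\lambda)=0$. In that case the left side of \eqref{second-general} would be ${\rm d^2}\delta_\Omega(\bar\omega;0)(\nabla\varphi(\xb)d)=0$, because $\nabla\varphi(\xb)d\in T_\Omega(\bar\omega)$.
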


\begin{proof}
(i).
Since $\bar x$ is a feasible point,
$-F(\bar x)=b(\bar x)^T\eta^*$ for $\eta^*\in N_P(g(\bar x))$, where $\eta^*$ is unique under the condition \eqref{EqBasicAss_b}.
Let $\varphi(x):=(x,-F(x))$. Denote $\bar \omega:=(\bar x, b(\bar x)^T\eta^*)$. Then $\bar \omega=(\bar x,-F(\bar x))=\varphi(\bar x)\in \Omega$. Note that
\begin{align*}
C(\bar x)&=\{d\,|\, \nabla f(\bar x)d\leq 0, \nabla \varphi(\bar x)d\in T_\Omega(\varphi(\bar x))\}
=\{d| \nabla f(\bar x)d\leq 0, (d,-\nabla F(\bar x)d)\in T_\Omega(\bar \omega)\}\\
&=\{d\, |\, \nabla f(\bar x)d\leq 0, -\nabla F(\bar x)d=(\nabla b(\bar x)d)^T\eta^*+b(\bar x)^Te^*, (\nabla g(\bar x)d, e^*)\in T_{{\rm gph}N_P}(g(\bar x),\eta^*)\},
\end{align*}
where the third equation follows from Theorem \ref{Thm 4.4}(i). Hence the direction $d$ satisfying \eqref{second-1} belongs to the critical cone $C(\bar x)$. For such $d$, according to Proposition \ref{Prop2.17}, we need to show that the set-valued mapping $x\tto \varphi(x)-\Omega$ is metrically subregular at $(\bar x,0)$ in direction $d$, which can be ensured by the condition
\begin{equation}\label{dir-mscq}
\nabla \varphi(\bar x)^T\myvec{\mu \\ \lambda}=0, \ \myvec{\mu \\ \lambda}\in N_\Omega(\varphi(\bar x); \nabla \varphi(\bar x)d) \Longrightarrow \lambda=0.
\end{equation}
Let $\eta:=(d,-\nabla F(\bar x)d)$. Then $\eta=\nabla \varphi(\bar x)d=(d, (\nabla b(\bar x)d)^T\eta^*  +b(\bar x)^T e^*)$ by \eqref{second-1}. Hence $\eta\in T_\Omega(\bar \omega)$ by \eqref{ue}. Note that
 $T^2_\Omega(\varphi(\bar x);\nabla \varphi(\bar x)d)=T^2_\Omega(\bar \omega; \eta)$ is nonempty according to the argument preceding \eqref{second-tangent-twopart}. It follows from \eqref{EqDirLimNormalOmega} that  \begin{equation}\label{normalcone-1}
(\mu, \lambda)\in N_\Omega(\varphi(\bar x);\nabla \varphi(\bar x)d) \Longleftrightarrow
\left\{\begin{array}{l} \mu=-\nabla \big(b^T\eta^*\big)(\bar x)^T\lambda+\nabla g(\bar x)^T\tau^*, \\
 (\tau^*,b(\bar x)\lambda)\in N_{{\rm gph}N_P}((g(\bar x),\eta^*);(\nabla g(\bar x)d,e^*)).
 \end{array}\right. \end{equation}
 Since $\nabla \varphi(\bar x)^T \myvec{\mu \\ \lambda}=\mu-\nabla F(\bar x)^T\lambda$, then \eqref{dir-mscq} takes the form \eqref{nnamcq-2}.

Let $L(x,\mu,\lambda):=f(x)+\langle \mu,x \rangle-\langle \lambda, F(x)\rangle  $ and \[\Lambda(\bar x;d):=\{\lambda\,|\,\nabla_x L(\bar x,\mu,\lambda)=0, (\mu,\lambda)\in N_\Omega(\varphi(\bar x);\nabla \varphi(\bar x)d)\}.\]
Then
\begin{align}
(\mu,\lambda)\in \Lambda(\bar x;d)&\Longleftrightarrow \left\{\begin{array}{l} \nabla f(\bar x)+\mu-\nabla F(\bar x)^T\lambda=0,\\
 \mu=-\nabla \big(b^T\eta^*\big)(\bar x)^T\lambda+\nabla g(\bar x)^T\tau^*, \\
 (\tau^*,b(\bar x)\lambda)\in N_{{\rm gph}N_P}((g(\bar x),\eta^*);(\nabla g(\bar x)d,e^*)).
 \end{array}\right. \nonumber\\
 & \Longleftrightarrow \left\{\begin{array}{l} \nabla f(\bar x)-\nabla F(\bar x)^T\lambda
 -\nabla \big(b^T\eta^*\big)(\bar x)^T\lambda+\nabla g(\bar x)^T\tau^*=0,\\
 (\tau^*,b(\bar x)\lambda)\in N_{{\rm gph}N_P}((g(\bar x),\eta^*);(\nabla g(\bar x)d,e^*)).
 \end{array}\right. \label{first-order optim}
 \end{align}
Since $\Lambda(\bar x;d)$ is nonempty under metric subregularity, then we can obtain the existence of $\lambda$ and $\tau^*$ satisfying
\eqref{second-3} by \eqref{first-order optim}.

Take $\omega^*:=(\mu, \lambda)\in N_\Omega(\varphi(\bar x);\nabla \varphi(\bar x)d)=\dom\hat\sigma_{T^2_\Omega(\bar\omega;\eta)}$, where the equality is due to \eqref{EqDomHatSigmaOmega1}. Then $\omega^*=(-\nabla \big(b^T\eta^*\big)(\bar x)^T\lambda+\nabla g(\bar x)^T\tau^*,\lambda)$ by \eqref{normalcone-1}. Hence it follows from Theorem \ref{Thm 4.4}(iv) that
\begin{align}\label{hatsigma-GP}
&\hat\sigma_{T^2_\Omega(\varphi(\bar x);\nabla \varphi(\bar x)d)}(\mu,\lambda)=\hat\sigma_{T^2_\Omega(\bar\omega;\eta)}(\omega^*)\nonumber\\
&=\skalp{\lambda, 2(\nabla b(\bar x)d)^Te^*+\nabla^2 b(\bar x)(d,d)^T\eta^*}-\skalp{\tau^*,\nabla^2g(\bar x)(d,d)}.
\end{align}
This together with the fact $\nabla^2_{xx}L(\bar x,\mu,\lambda)(d,d)=\nabla^2f(\bar x)(d,d)-\langle \lambda, \nabla^2F(\bar x)(d,d)\rangle $ and \eqref{add-second-thereom-1} yields \eqref{second-4}.

(ii). From \eqref{EqDomSigmaOmega1}, we know
\begin{equation}
(\mu, \lambda)\in \widehat  N^p_\Omega(\varphi(\bar x);\nabla \varphi(\bar x)d)= \widehat N^p_\Omega(\bar\omega;\eta) \Longleftrightarrow \left\{
\begin{array}{l}
\mu=-\nabla \big(b^T\eta^*\big)(\bar x)^T\lambda+\nabla g(\bar x)^T\tau^* \\
(\tau^*,b(\bar x)\lambda)\in  \widehat N_{T_{{\rm gph}N_P}(g(\bar x),\eta^*)}(\nabla g(\bar x)d, e^*).
\end{array}
\right. \label{mu-lambda}
\end{equation}
Let $L^\alpha(x,\mu,\lambda):=\alpha f(x)+\langle \mu,x \rangle-\langle \lambda, F(x)\rangle$. The condition required on $(\alpha,\mu,\lambda)$ in Proposition \ref{Prop2.17} is
\begin{equation}\label{alpha-1}
\alpha\geq 0,\ \  (\alpha,\mu,\lambda)\neq 0, \ \ (\mu, \lambda)\in \widehat  N^p_\Omega(\varphi(\bar x);\nabla \varphi(\bar x)d),\ \  \nabla_x L^\alpha(\bar x,\mu,\lambda)=0.
\end{equation}
We claim that the above condition is equivalent to
\begin{equation}\label{alpha-2}
\left\{
\begin{array}{l}
\alpha\geq 0, \ \ (\alpha,\lambda,\tau^*)\neq 0, \ \ \alpha\nabla f(\bar x)-\nabla F(\bar x)^T\lambda-
\nabla \big(b^T\eta^*\big)(\bar x)^T\lambda+\nabla g(\bar x)^T\tau^*=0, \\
(\tau^*,b(\bar x)\lambda)\in \widehat N_{T_{{\rm gph}N_P}(g(\bar x), \eta^*)}(\nabla g(\bar x)d, e^*).\end{array}\right.
\end{equation}
Comparing \eqref{alpha-1} and \eqref{alpha-2} on the basis of \eqref{mu-lambda}, it only remains to show that
$\alpha=0,\lambda=0,\mu\neq 0$ is equivalent to $\alpha=0,\lambda=0,\tau^*\neq 0$ in that setting.
In fact, if $\alpha=0,\lambda=0,\mu\neq 0$, then $\mu=\nabla g(\bar x)^T\tau^*$ by \eqref{mu-lambda}, and hence $\tau^*\neq 0$.
Conversely, if $\alpha=0,\lambda=0,\tau^*\neq 0$, then according to \eqref{mu-lambda} we know
\begin{equation}\label{mu-1}
\mu=\nabla g(\bar x)^T\tau^* \ \ {\rm and}\ \
(\tau^*,0)\in \widehat N_{T_{{\rm gph}N_P}(g(\bar x), \eta^*)}(\nabla g(\bar x)d, e^*).
\end{equation}
Note that
\begin{align}\widehat N_{T_{{\rm gph}N_P}(g(\bar x), \eta^*)}(\nabla g(\bar x)d, e^*) &
\subseteq N_{T_{{\rm gph}N_P}(g(\bar x), \eta^*)}(\nabla g(\bar x)d, e^*)=N_{{\rm gph}N_P}((g(\bar x), \eta^*);(\nabla g(\bar x)d, e^*)) \nonumber \\
&\subseteq \big(N_P(g(\bar x)) \big)^+\times \Spanb{\K_P(g(\bar x),\eta^*)}, \label{mu-2}
\end{align}
where the equality is due to \eqref{relation-normal} (by requiring $G:=I$ and $D:={\rm gph}N_P$ in Lemma \ref{ThDirNonDegen}) and the last step follows from \eqref{normal-space}. Putting \eqref{mu-1} and \eqref{mu-2} together yields
$\mu=\nabla g(\bar x)^T\tau^*$ with $\tau^*\in \big(N_P(g(\bar x)) \big)^+$.
Hence $\mu\neq 0$ by \eqref{EqBasicAss_g}, since $\tau^*\neq 0$.

Similar to the above argument on \eqref{hatsigma-GP}, take $\omega^*:=(\mu, \lambda)\in \widehat  N^p_\Omega(\varphi(\bar x);\nabla \varphi(\bar x)d)=\dom\sigma_{T^2_\Omega(\bar\omega;\eta)}$ by \eqref{EqDomSigmaOmega1}.
Then it follows from Theorem \ref{Thm 4.4}(iv) that
\begin{align*}
&{\rm d^2\,}\delta_\Omega(\varphi(\bar x),\omega^*)(\nabla \varphi(\bar x)d) = {\rm d^2\,}\delta_\Omega(\bar \omega;\omega^*)(\eta)=-\sigma_{T^2_\Omega(\bar\omega;\eta)}(\omega^*) \\
& =-\skalp{\lambda, 2(\nabla b(\bar x)d)^Te^*+\nabla^2 b(\bar x)(d,d)^T\eta^*}+\skalp{\tau^*,\nabla^2g(\bar x)(d,d)}.
 \end{align*}
 Hence \eqref{second-general} takes the form \eqref{second-5}. So the desired result follows from Proposition \ref{Prop2.17}.
 \end{proof}

\section{Second-order optimality conditions for MPVIs}

For illustration, consider the mathematical program with variational inequality constraints from the introduction
\begin{flalign*}
\begin{split}
\mbox{(MPVI)} \hspace{46mm} \min\limits_{x,y} & \ \  f(x,y)\\
{\rm s.t.}
& \ \  \langle F(x,y),y'-y\rangle \geq 0, \ \ \ \forall y'\in \Gamma(x)
\end{split}&
\end{flalign*}
with twice continuously differentiable functions $f$, $F$, and $\psi$ of appropriate dimensions and
where $\Gamma(x):=\{y\,|\, \psi(x,y)\in P\}$ for the particular choice $P :=\R^l_-$.
For this special case, the expressions of $T_{{\rm gph}N_P}$, $N_{{\rm gph}N_P}$, and $\widehat N_{T_{{\rm gph}N_P}}$
can be obtained by simply calculating the corresponding ones for $\R_-$. Note that ${\rm gph}N_{\R_-}=(\R_-,0)\cup(0,\R_+)$. Then
  \[
T_{{\rm gph}N_{\R_-}}(a,b)=\begin{cases}
(\R,0) & a<0, b=0, \\
(\R_{-},0) \cup (0,\R_+) & a=0, b=0,\\
(0,\R) & a=0, b>0,
\end{cases}
\]
\[
 N_{{\rm gph}N_{\R_-}}((a,b);(c,d))=
 \begin{cases}
 (0,\R) & a<0, b=0, c\in \R, d=0, \\
 (0,\R) & a=0, b=0, c<0, d=0,\\
 (0,\R)\cup (\R,0)\cup (\R_+,\R_-) & a=0,b=0,c=0,d=0,\\
 (\R,0) & a=0, b=0, c=0, d>0,\\
 (\R,0) & a=0, b>0, c=0, d\in \R,
 \end{cases}
\]
and
\[
 \widehat N_{T_{{\rm gph}N_{\R_-}}(a,b)}(c,d)=
 \begin{cases}
 (0,\R) \ \ \ \ \ \ \ \ \ \ \ \ \ & a<0, b=0, c\in \R, d=0, \\
 (0,\R) & a=0, b=0, c<0, d=0, \\
 (\R_+,\R_-) & a=0,b=0,c=0, d=0,\\
 (\R,0) & a=0, b=0, c=0, d>0,\\
 (\R,0) & a=0, b>0, c=0, d\in \R.
 \end{cases}
\]

\begin{theorem}[Second-order optimality conditions for MPVIs]\label{second-order-appl-1}
Let  $\bar z:=(\bar{x},\bar{y})$ be a feasible point of MPVI. Assume that $\psi(x,y)$ is convex in variable $y$ and
that the lower level program satisfies the nondegeneracy condition
\begin{equation}\label{nondege-condition}
\nabla_y \psi(\bar z)^T d^*=0,\ d^* \in \Spanb{N_P(\psi(\bar z))}\ \Rightarrow d^*=0.
\end{equation}
 Then there exists a unique lower level multiplier $\eta^* \in N_P(\psi(\zb))$ such that $0=F(\zb) +\nabla_y \psi(\zb)^T\eta^*$.
Let direction $d$  satisfy
\begin{equation}\label{direction-bilevel}
\nabla f(\zb)d \leq 0,\ \  0=\nabla [F +  \nabla_y \psi^T\eta^*](\zb)d +\nabla_y \psi(\zb)^T e^*,
\end{equation}
where $(\nabla \psi(\zb )d,e^*)\in T_{{\rm gph}N_P}(\psi(\zb),\eta^*)$.
\begin{itemize}
\item[{\rm (i)}] Let $\bar z$ be a local optimal solution of MPVI. Suppose that
\[
\left. \begin{array}{c}
\nabla \big(F+ \nabla_y \psi^T\eta^*\big)(\bar z)^T\lambda - \nabla \psi(\zb)^T \tau^*=0, \\
 (\tau^*,\nabla_y \psi(\zb)\lambda ) \in N_{{\rm gph} N_P}\left ((\psi(\zb),\eta^*);(\nabla \psi(\zb)d,e^*)\right )
\end{array}
 \right\} \Longrightarrow \lambda=0.
\]
   Then there exist  multipliers  $\lambda, \tau^*$ such that
   the first-order condition
\[
\nabla f(\zb) -\nabla \big(F+\nabla_y \psi^T\eta^*\big)(\bar z)^T\lambda+\nabla \psi(\zb)^T\tau^*=0
\]
   and the second-order condition
  \begin{align*}
& \nabla^2 f(\bar{z})(d,d)-\langle \lambda, \nabla^2 \big( F + \nabla_y \psi^T\eta^*\big)(\bar z) (d,d)+2\big(\nabla (\nabla_y \psi)(\zb)d\big)^Te^*\rangle\\
&  +\skalp{\tau^*,\nabla^2\psi(\zb)(d,d)}\geq 0,
\end{align*}
where
$(\tau^*,\nabla_y \psi(\zb)\lambda ) \in N_{{\rm gph} N_P}\left ((\psi(\zb),\eta^*);(\nabla \psi(\zb)d,e^*)\right ).$
    \item[\rm (ii)] Suppose that for every nonzero $d$ satisfying \eqref{direction-bilevel} there is $(\alpha,\lambda,\tau^*)$ not all  equal to zero satisfying $\alpha\geq 0$ and
 \[
 \left\{
\begin{array}{l}
 \alpha\nabla f(\zb) -\nabla \big(F+\nabla_y \psi^T\eta^*\big)(\bar z)^T\lambda+\nabla \psi(\zb)^T\tau^*=0,\\
 (\tau^*,\nabla_y \psi(\zb)\lambda ) \in \widehat N_{T_{{\rm gph}N_P}(\psi(\zb),\eta^*)}(\nabla \psi(\zb)d,e^*)
\end{array}\right.
\]
  and
   \begin{align*}
 &\alpha\nabla^2 f(\bar{z})(d,d)-\langle \lambda, \nabla^2 \big(F + \nabla_y \psi^T\eta^*\big)(\bar z) (d,d)+2\big(\nabla (\nabla_y \psi)(\zb)d\big)^Te^*\rangle\\
 & +\skalp{\tau^*,\nabla^2\psi(\zb)(d,d)}>0,
\end{align*}
    then $\bar z$ is an essential local minimizer of second order for MPVI.
\end{itemize}
\end{theorem}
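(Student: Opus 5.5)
The plan is to reduce the MPVI theorem to Theorem~\ref{second-order-appl-1} for GEPs, with $x$ replaced by the joint variable $z=(x,y)\in\R^{n_1+n_2}$. First, convexity of $\psi(x,\cdot)$ together with the nondegeneracy condition \eqref{nondege-condition} gives $N_{\Gamma(\bar x)}(\bar y)=\nabla_y\psi(\bar z)^TN_P(\psi(\bar z))$, and the same identity for $(x,y)$ near $\bar z$ by stability of nondegeneracy. Since $\Gamma(x)$ is convex, the variational inequality constraint is equivalent to $-F(x,y)\in N_{\Gamma(x)}(y)$. Hence, locally around $\bar z$, MPVI coincides with the GEP
\[
\min_z f(z)\quad\mbox{s.t.}\quad 0\in F(z)+b(z)^TN_P(g(z)),\qquad g(z):=\psi(z),\quad b(z):=\nabla_y\psi(z)\in\R^{l\times n_2},
\]
as in \eqref{MPVI-1}. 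Local optimality and the essential-minimizer property are local notions, so they transfer between the two problems. In this step I will also check that the distance term ${\rm dist}(\varphi(z),\Omega)$ refers to the same reformulation, since GEP optimality is stated through GP.

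Second, I will verify the assumption \eqref{EqBasicAss} at $\bar z$. Condition \eqref{EqBasicAss_b} reads $\nabla_y\psi(\bar z)^Td^*=0$, $d^*\in\Spanb{N_P(\psi(\bar z))}\Rightarrow d^*=0$, which is exactly \eqref{nondege-condition}. Condition \eqref{EqBasicAss_g} uses the full Jacobian, $\nabla\psi(\bar z)^T d^*=(\nabla_x\psi(\bar z)^Td^*,\nabla_y\psi(\bar z)^Td^*)$. Its vanishing forces $\nabla_y\psi(\bar z)^Td^*=0$, so \eqref{EqBasicAss_g} is implied by \eqref{nondege-condition} as well. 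Theorem~\ref{second-order-appl-1} then yields the unique multiplier $\eta^*$ with $0=F(\bar z)+\nabla_y\psi(\bar z)^T\eta^*$.

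Third, I will translate each formula by substitution. The identity $\nabla F(\bar z)d+(\nabla b(\bar z)d)^T\eta^*=\nabla[F+\nabla_y\psi^T\eta^*](\bar z)d$, with $\eta^*$ held fixed, turns \eqref{second-1} into \eqref{direction-bilevel}. Similarly, $\nabla F(\bar z)^T\lambda+\nabla(b^T\eta^*)(\bar z)^T\lambda=\nabla(F+\nabla_y\psi^T\eta^*)(\bar z)^T\lambda$ converts \eqref{nnamcq-2}, \eqref{second-3}, and the first-order system in (ii). In the curvature term, $\langle\lambda,\nabla^2F(\bar z)(d,d)+\nabla^2 b(\bar z)(d,d)^T\eta^*\rangle=\langle\lambda,\nabla^2(F+\nabla_y\psi^T\eta^*)(\bar z)(d,d)\rangle$, and $(\nabla b(\bar z)d)^Te^*=(\nabla(\nabla_y\psi)(\bar z)d)^Te^*$. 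With these, \eqref{second-4} and \eqref{second-5} become the stated inequalities.

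The main obstacle is the first step: justifying rigorously that the feasible set of MPVI equals that of the GEP near $\bar z$. This requires the normal cone formula for $\Gamma(x)$ at all nearby $(x,y)$, not just at $\bar z$. I would handle it by noting that nondegeneracy at $\bar z$ persists nearby, using Lemma~\ref{relation-tangent} for the inclusion $\Spanb{N_P(\psi(z))}\subseteq\Spanb{N_P(\psi(\bar z))}$. Nondegeneracy implies Robinson's constraint qualification for the convex system $\psi(x,\cdot)\in P$, which yields the subdifferential chain rule. Everything else is bookkeeping.
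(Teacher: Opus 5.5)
Your proposal is correct and matches the paper's proof. Both reformulate MPVI as the GEP \eqref{MPVI-1} with $g=\psi$ and $b=\nabla_y\psi$, observe that \eqref{EqBasicAss_b} is exactly \eqref{nondege-condition} and that \eqref{EqBasicAss_g} follows from it via $\nabla\psi(\bar z)^Td^*=(\nabla_x\psi(\bar z)^Td^*,\nabla_y\psi(\bar z)^Td^*)$, and then invoke the GEP theorem. One small slip: Lemma \ref{relation-tangent} concerns spans of tangent cones, not of normal cones. The inclusion you need instead follows from $N_P(\psi(z))\subseteq N_P(\psi(\bar z))$ for $z$ near $\bar z$, which is the polar of the local inclusion $T_P(\psi(\bar z))\subseteq T_P(\psi(z))$ valid for convex polyhedral $P$. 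With that fix, your argument that nondegeneracy persists near $\bar z$ supplies a detail the paper only asserts, namely that the normal cone formula holds at nearby points.
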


\begin{proof}
Note that $\Gamma(x)$ is convex since $\psi(x,y)$ is convex in $y$ and $P:=\R^l_-$.
 The nondegeneracy condition \eqref{nondege-condition} ensures $N_{\Gamma(x)}(y)=\nabla_y \psi(x,y)^TN_P(\psi(x,y))$. Hence (MPVI) can be reformulated as \eqref{MPVI-1} that is a special case of (GEP). Therefore, it only remains to show that the condition \eqref{nondege-condition} is equivalent to the condition \eqref{EqBasicAss}, which now takes the following formula
\begin{subequations}
  \begin{numcases}{}
    \nabla \psi(\bar z)^Td^*=0,\ \ d^* \in \Spanb{N_P(\psi(\bar z))}\ \Rightarrow d^*=0, \label{VI-1} \\
    \nabla_y \psi (\bar z)^Td^*=0,\ d^* \in \Spanb{N_P(\psi (\bar z))}\ \Rightarrow d^*=0. \label{VI-2}
     \end{numcases}
\end{subequations}
Notice that the condition \eqref{nondege-condition} is just \eqref{VI-2}. Meanwhile, \eqref{VI-1} is implied by \eqref{VI-2} since  $\nabla \psi(\bar z)^Td^*=(\nabla_x \psi(\bar z)^Td^*, \nabla_y \psi(\bar z)^Td^*)$. This establishes the equivalence between \eqref{nondege-condition} and \eqref{EqBasicAss}.
\end{proof}

\section{Concluding Remarks}
In this paper, we have derived second-order necessary and sufficient optimality conditions for very general problem GEP.
Our approach was to reformulate the GEP as a standard constrained optimization problem GP,
in which the set $\Omega$ is the image of the solution set of a disjunctive system under a smooth mapping.
We developed a comprehensive variational analysis of $\Omega$ and calculated the required second-order objects describing the curvature of the set $\Omega$.
As an illustration, the proposed framework was used to establish second-order optimality conditions for mathematical programs with variational inequality constraints (MPVIs).
Many other important problem classes can be modeled as GEPs, and we aim to apply these results to bilevel programs and mathematical programs with second-order cone complementarity constraints in future work.

\end{document}